\newtheorem{theorem}{Theorem}[section]
\newtheorem{lemma}[theorem]{Lemma}
\theoremstyle{definition}
\newtheorem{definition}[theorem]{Definition}
\newtheorem{corollary}[theorem]{Corollary}
\theoremstyle{remark}
\newtheorem{remark}[theorem]{Remark}
\newtheorem*{remark*}{Remark}
\newcommand{\inner}[1]{\left\langle#1\right\rangle}
\newcommand{\norm}[1]{\left\lVert#1\right\rVert}
\newcommand{\abs}[1]{\left\lvert#1\right\rvert}
\newcommand{\pa}[1]{\left( #1 \right)}
\newcommand{\rpa}[1]{\left[ #1 \right]}
\newcommand{\br}[1]{\left\lbrace #1\right\rbrace}
\newcommand{\R}{\mathbb{R}}
\newcommand{\N}{\mathbb{N}}
\newcommand{\I}{\mathcal{I}}
\newcommand{\PP}{\mathbf{P}}
\newcommand{\XX}{\mathbf{X}}
\newcommand{\YY}{\mathbf{Y}}
\newcommand{\W}{\mathbf{W}}
\numberwithin{equation}{section}
\def\1{\mathbbm{1}}
\definecolor{bostonuniversityred}{rgb}{0.8, 0.0, 0.0}
\newcommand{\amit}{\textcolor{black}} 
\definecolor{byzantium}{rgb}{0.44, 0.16, 0.39}
\newcommand{\D}{{\bf D}}
\newcommand{\X}{{\bf X}}
\newcommand{\Y}{{\bf Y}}
\newcommand{\II}{{\bf I}}
\newcommand{\C}{{\bf C}}
\newcommand{\B}{{\bf B}}
\newcommand{\A}{{\bf A}}
\newcommand{\K}{{\bf K}}
\newcommand{\tr}{\operatorname{tr}}
\renewcommand{\span}{\operatorname{span}}
\renewcommand{\Re}{\operatorname{Re}}
\newcommand{\hess}{\operatorname{Hess}}
\title[Generalised Fisher Information Approach to Defective Fokker-Planck Equations]{Generalised Fisher Information in Defective Fokker-Planck Equations}
\author{Anton Arnold$^1$, Amit Einav$^2$ \& Tobias W\"ohrer$^1$}
\date{}
\address{$^1$Vienna University of Technology, Institute of Analysis and Scientific Computing, Wiedner Hauptstr. 8--10, A-1040 Wien, Austria}
\email{\{anton.arnold, tobias.woehrer\}@tuwien.ac.at}
\address{$^2$Durham University, School of Mathematical Sciences, Lower Mountjoy Stockton Road, DH1 3LE, Durham, United Kingdom}
\email{amit.einav@durham.ac.uk}
\thanks{The authors gratefully acknowledge the support of the Hausdorff Research Institute for Mathematics
	(Bonn), through the Junior Trimester Program on Kinetic Theory. The first author was partially supported by the FWF-funded SFB \#F65. 
The second author was supported by the Austrian Science Fund (FWF) grant M 2104-N32.
The first and the third authors were partially supported by the FWF-doctoral school ``Dissipation
and dispersion in non-linear partial differential equations''. The third author conducted part of the research at FAU Erlangen, Department of Data Science and was funded in part by
the Austrian Science Fund (FWF) 10.55776/J4681.\\
The authors would like to thank the reviewers of this work for their helpful and constructive comments.}
\begin{document}

\maketitle

\begin{abstract}
	The goal of this work is to introduce and investigate a \textit{generalised Fisher Information} in the setting of linear Fokker-Planck equations. This functional, which depends on two functions instead of one, exhibits the same decay behaviour as the standard Fisher information, and allows us to investigate different parts of the Fokker-Planck solution via an appropriate decomposition. Focusing almost exclusively on Fokker-Planck equations with constant drift and diffusion matrices, we will use a modification of the well established Bakry-Emery method with this newly defined functional to provide an alternative proof to the sharp long time behaviour of relative entropies of solutions to such equations when the diffusion matrix is positive definite and the drift matrix is defective. This novel approach is different to previous techniques and relies on minimal spectral information on the Fokker-Planck operator, unlike the one presented in \cite{AEW18} where powerful tools from spectral theory were needed.
\end{abstract}
{\tiny
	{KEYWORDS.}
	Fokker-Planck equation, large time behaviour, defective equation, Fisher Information}
\\{\tiny{MSC.}
	35Q84 (Fokker-Planck equations), 35Q82 (PDEs in connection with statistical mechanics), 35H10 (Hypoelliptic equations), 35K10 (second order parabolic equations), 35B40 (Asymptotic behavior of solutions to PDEs)}
%\subsection{ToDos}
%\begin{itemize}
%	\item Add Remark concerning rigorous differentiation of integrals
%	\item upper/lower contractivity
%	\item Reformatting of equations?
%\end{itemize}	
	
	\tableofcontents

\section{Introduction}\label{sec:intro}

\subsection{Background}\label{subsec:background}
The Fokker-Planck equation, in its various forms, has been a focal point of attention since its formalisation at the early 20th century. In its heart, the Fokker-Planck equation is influenced by two processes: \textit{Diffusion}, which is governed by a (possibly spatial variable dependent) \textit{positive semi-definite} matrix, and \textit{Drift}, which is generated by a variable dependent vector.\\
In recent years interest in degeneracies in the Fokker-Planck equation has increased. In particular, in the question of how such conditions impact the existence of an equilibrium and the rate of convergence to it.

In this work we will focus our attention on Fokker-Planck equations with linear drift of the form
\begin{equation}\label{eq:fokkerplanck}
\partial_t f(t,x)=Lf(t,x):=\text{div}\pa{\D\nabla f(t,x)+\C xf(t,x)}, \quad\quad t>0, x\in\R^d,
\end{equation}
with appropriate initial conditions $f_0(x)$, where $\D$ and $\C$, the \textit{diffusion} and \textit{drift} matrices respectively, are constant and real-valued.\\
Equations of the form \eqref{eq:fokkerplanck}, particularly in the case of degeneracy in the diffusion, have been extensively investigated in the works of Arnold and Erb (\cite{AE}) and Ottobre, Pavliotis and Pravda-Starov (\cite{OPPS12}). In particular, it was shown in \cite{AE} that in order to be able to perform long time behaviour analysis, and indeed to even have a state of equilibrium, additional conditions on the diffusion and drift matrices must hold. Namely:
\begin{enumerate}[(A)]
\item\label{item:cond_semipositive} $\D$ is positive semi-definite with
$$1\leq \operatorname{rank}\pa{\D} \leq d.$$
\item\label{item:cond_positive_stability} All eigenvalues of $\C$ have positive real part (this is sometimes known as \textit{positive stable}).
\item\label{item:cond_no_invariant_subspace_to_kernel} There exists no $\C^T$-invariant subspace of $\operatorname{ker}\pa{\D}$.
\end{enumerate}
%We will only consider the \textit{non degenerate} case, i.e. the case where \eqref{item:cond_semipositive} holds with $\text{rank}\pa{D}=d$, and as such condition \eqref{item:cond_no_invariant_subspace_to_kernel} is automatically fulfilled.\\
With these conditions at hand, the authors have shown that the equation has a (normalised) unique equilibrium, and continued to investigate the convergence to it with the help of so-called (relative) entropy functionals:
\begin{definition}\label{def-entropy}
	We say that a function $\psi$ is a \textit{generating function for an admissible relative entropy} if, $\psi\in C\pa{[0,\infty)}\cap C^4\pa{\R_+}$, $\psi(1)=\psi^\prime(1)=0$, $\psi^{\prime\prime} > 0$ on $\R_+$ and 
	\begin{equation}\label{eq:psicondition}
		\pa{\psi^{\prime\prime\prime}}^2 \leq \frac{1}{2}\psi^{\prime\prime}\psi^{\prime\prime\prime\prime} \quad \text{on } \R_+,
	\end{equation} 
	where $\R_+:=\pa{0,\infty}$.
%	\new{Below, we introduce several examples of generating functions satisfying the above condition, which include the Boltzmann entropy.}
	For such a $\psi$, and a non-negative function $f$, we define the \textit{admissible relative entropy} $e_\psi\pa{\cdot|f_\infty}$ as the functional
	\begin{equation}\label{eq:defentropy}
		e_\psi\pa{f|f_\infty}:= \int_{\R^d}\psi\pa{\frac{f(x)}{f_\infty(x)}}f_\infty(x)dx,
	\end{equation}
	where $f_\infty(x)$ is a positive function that represents the equilibrium we expect to converge to. It is conventional to assume that $f$ has the same mass as $f_\infty$, i.e.\ $\int_{\R^d} f(x) dx = \int_{\R^d} f_\infty(x) dx$. Due to the linearity of \eqref{eq:fokkerplanck} we shall assume that $f_\infty$ has mass $1$.
\end{definition}
It is worth to mention that condition \eqref{eq:psicondition} is equivalent to the concavity of $\frac{1}{\psi''}$ on $\R_+$. 
%We also note that \eqref{eq:defentropy} with $\psi(y)=y\log y-y+1$ is the Boltzmann entropy.
For more information on such functionals we direct the reader to \cite{AMTU01,BaEmD85}.

To be able to state our main theorem, we need to remind ourselves the notions of defectiveness of an eigenvalue:
%\new{To continue let us recall following eigenvalue property which, as we see below, is relevant for the decay behaviour of solutions in our setting:
\begin{definition}
An eigenvalue is \textit{defective} if its geometric multiplicity is strictly less than its algebraic multiplicity. We will call this difference the \textit{defect} of the eigenvalue.
\end{definition}
%Consider the example of the matrix
%\begin{equation*}
%	C = \begin{pmatrix}
%		1 & \frac12\\ 
%		-\frac12& 0
%		\end{pmatrix}.
%\end{equation*}
%It has an defective eigenvalue $\lambda_{1,2} = \frac12$ and hence is not diagonalizable but can be transformed to a non-trivial Jordan normal form of size $2\times 2$.

 Arnold and Erb showed that the rate of convergence to equilibrium, measured with \amit{relative entropies as defined above}, depends on the minimum of the real parts of the eigenvalues of the matrix $\C$ \textit{and} the defects associated to these eigenvalues. Denoting by
 \begin{equation}\label{eq:def_of_mu}
 	\mu:=\min\br{\Re\pa{\lambda}\;|\;\lambda \text{ is an eigenvalue of }\C}
 \end{equation}
and 
\begin{equation}\label{eq:def_of_R_mu}
	R_\mu:=\br{\lambda \; | \; \lambda\text{ is an eigenvalue of }\C\text{ and }\Re\pa{\lambda}=\mu}
\end{equation}
 the following theorem was shown in \cite{AE}:
\begin{theorem}\label{thm:anton_erb}
	Consider the Fokker-Planck equation \eqref{eq:fokkerplanck} with diffusion and drift matrices $\D$ and $\C$ which satisfy Conditions \eqref{item:cond_semipositive}-\eqref{item:cond_no_invariant_subspace_to_kernel} and denote by $f_\infty(x)$ the unit mass equilibrium of the equation\footnote{i.e. $Lf_\infty(x)=0$ and $$\int_{\R^d}f_\infty(x)dx=1.$$}.
	Then for any admissible relative entropy $e_\psi$ and a solution $f(t)$ to \eqref{eq:fokkerplanck} with a non-negative unit mass initial datum $f_0$ and a finite initial entropy we have that:
	\begin{enumerate}[(i)]
		\item If the eigenvalues from the set $R_\mu$, defined in \eqref{eq:def_of_R_mu}, are all non-defective
		%\footnote{An eigenvalue is \textit{defective} if its geometric multiplicity is strictly less than its algebraic multiplicity. This difference is the \textit{defect}.} 
		then there exists a fixed geometric constant $c$, that does not depend on $f_0$, such that
		\begin{equation} \label{eq:entropydecayAEnodef}
			e_\psi(f(t)|f_\infty) \leq c e_\psi(f_0|f_\infty)e^{-2\mu t}, \quad t\geq 0.
		\end{equation}
			\item  If one of the eigenvalues from the set $R_\mu$, defined in \eqref{eq:def_of_R_mu}, is defective then for any $\epsilon>0$ there exists a fixed geometric constant $c_{\epsilon}$, that does not depend on $f_0$, such that
			\begin{equation} \label{eq:entropydecayAEdef}
				e_\psi(f(t)|f_\infty) \leq c_{\epsilon} e_\psi(f_0|f_\infty)e^{-2(\mu-\epsilon) t},\quad t\geq 0.
			\end{equation}
		\end{enumerate}
\end{theorem}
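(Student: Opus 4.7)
Following the Bakry-\'Emery paradigm adapted to the possibly degenerate, non-symmetric operator at hand, the plan is to establish exponential decay of a carefully weighted Fisher information, and then to transfer it to the entropy via a convex Sobolev inequality.

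Setting $h := f/f_\infty$, direct differentiation of $e_\psi(f|f_\infty)=\int \psi(h)\,f_\infty\,dx$ along the flow \eqref{eq:fokkerplanck} yields
\[ \frac{d}{dt}e_\psi(f(t)|f_\infty) = -\int_{\R^d} \psi''(h)\,\langle \nabla h,\D\nabla h\rangle\, f_\infty\,dx. \]
Because $\D$ is only positive semi-definite, this right-hand side need not control $e_\psi$ directly. To remedy this I would introduce, for a symmetric positive definite matrix $\PP$, the generalised Fisher information
\[ I_\PP(f|f_\infty) := \int_{\R^d} \psi''(h)\,\langle \nabla h,\PP\nabla h\rangle\, f_\infty\,dx. \]
The two-function variant $I_\PP(f,g|f_\infty)$ hinted at in the abstract is the key device which will allow me to separate the contributions of the distinct generalised eigenspaces of $\C^T$ without spoiling the $\psi$-nonlinearity in the weight $\psi''(h)$.

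A lengthy but standard computation, using the admissibility condition $(\psi''')^2\le \frac{1}{2}\psi''\psi^{(4)}$ to discard the nonlinear Hessian terms as in \cite{AMTU01}, should produce
\[ \frac{d}{dt}I_\PP(f(t)|f_\infty) \le -\int_{\R^d}\psi''(h)\,\langle \nabla h,(\PP\C+\C^T\PP)\nabla h\rangle\, f_\infty\,dx. \]
Hence the entire problem reduces to choosing $\PP$ so that $\PP\C+\C^T\PP \ge 2\nu\, \PP$ with $\nu$ as large as possible. Under Conditions (A)-(C), Lyapunov theory combined with the Jordan structure of $\C$ provides such $\PP$: when every eigenvalue in $R_\mu$ is non-defective one may take $\nu=\mu$, while in the defective case, for any $\varepsilon>0$, one constructs $\PP_\varepsilon$ with $\nu=\mu-\varepsilon$, at the price of a condition number $\|\PP_\varepsilon\|\,\|\PP_\varepsilon^{-1}\|$ that blows up as $\varepsilon\to 0^+$. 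Concretely, for a defective Jordan block one rescales the generalised eigenvectors by increasing powers of $\varepsilon$, so that the off-diagonal entries become arbitrarily small perturbations of a diagonal matrix satisfying the desired inequality with $\nu=\mu$.

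Once this matrix inequality is in hand, Gronwall gives $I_\PP(f(t)|f_\infty)\le I_\PP(f_0|f_\infty)\,e^{-2\nu t}$; the convex Sobolev inequality for the Gaussian equilibrium $f_\infty$ (Bakry-\'Emery for $\psi$-entropies, cf.\ \cite{AMTU01,BaEmD85}) yields $e_\psi(f|f_\infty)\le \kappa\, I_\PP(f|f_\infty)$ for an explicit geometric $\kappa$, while $I_\PP(f_0|f_\infty)$ is in turn controlled by $e_\psi(f_0|f_\infty)$ up to a further geometric constant (after a brief parabolic smoothing step if the initial Fisher information is infinite). Chaining these three estimates delivers \eqref{eq:entropydecayAEnodef} and \eqref{eq:entropydecayAEdef}, with $c_\varepsilon$ absorbing the $\varepsilon$-dependence of $\PP_\varepsilon$. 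The main obstacle is the defective case: one must construct $\PP_\varepsilon$ so that the loss in $\nu$ is genuinely linear in $\varepsilon$ while quantifying the unavoidable blow-up of $c_\varepsilon$, and it is here that the generalised, two-function Fisher information should play its essential role, by localising the $\varepsilon$-trade-off to the defective subspace rather than paying the penalty globally.
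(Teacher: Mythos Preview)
Your outline is essentially the Arnold--Erb argument from \cite{AE}, and that is exactly how the paper treats Theorem~\ref{thm:anton_erb}: it is \emph{quoted} as background, not reproved. The steps you list---differentiate $I_\psi^{\PP}$, discard the Hessian block via the admissibility condition $(\psi''')^2\le\tfrac12\psi''\psi''''$, reduce to the matrix inequality $\C^T\PP+\PP\C\ge 2\nu\PP$, choose $\PP$ from the Jordan structure of $\C$ (with $\nu=\mu$ in the non-defective case and $\nu=\mu-\varepsilon$ via rescaled generalised eigenvectors in the defective case), then close with a convex Sobolev inequality and a short regularisation step---are precisely the ingredients the paper attributes to \cite{AE} (see Remark~\ref{rem:on_P} and the discussion around \eqref{eq:defoffisher}).

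One correction, however: your final sentence misidentifies where the two-function generalised Fisher information $\mathcal{I}^{\PP}_\psi(f,g|f_\infty)$ enters. It plays \emph{no role} in the proof of Theorem~\ref{thm:anton_erb}; the $\varepsilon$-loss in \eqref{eq:entropydecayAEdef} is exactly what the single-argument $I^{\PP_\varepsilon}_\psi$ approach delivers, and Theorem~\ref{thm:anton_erb} is content with that loss. The two-function functional is the paper's new device for the \emph{sharper} Theorem~\ref{thm:main}, where the $\varepsilon$-loss is replaced by the polynomial correction $(1+t^{2n})$. There the decomposition $f=f_1+f_2$ with $f_1\in V_1$ requires one to track $\mathcal{I}^{\PP}_p(f,f_2|f_\infty)$ with $f_2$ not of definite sign, which the standard Fisher information cannot accommodate. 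So drop that last sentence: for Theorem~\ref{thm:anton_erb} the one-argument $I^{\PP}_\psi$ with the $\varepsilon$-perturbed $\PP_\varepsilon$ is already the whole story.
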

It is now known, for instance from \cite{AEW18,MoH19}, that \amit{for many types of relative entropies} the correction to \eqref{eq:entropydecayAEnodef} in the defective case is attained by a multiplication by a polynomial of order that equals the largest defect of the eigen\-values in $R_\mu$. This result will be reproved in this work, though with completely new tools and techniques. 
\subsection{Main Results}\label{subsec:main_results}
Our main focus in this paper will be to reprove the sharp long time behaviour of solutions to \eqref{eq:fokkerplanck} by using the (relative) entropies that are generated by the family of functions
\begin{equation}\label{eq:pentropy}
\psi_p(y):=\frac{y^{p}-p(y-1)-1}{p(p-1)} \geq 0, \quad 1<p\leq 2,
\end{equation}
i.e. the functionals 
$$e_p\pa{f|f_\infty}:=\int_{\R^d}\psi_p\pa{\frac{f(x)}{f_\infty(x)}}f_\infty(x)dx = \frac{1}{p(p-1)}\pa{\norm{f}_{L^p\pa{\R^d,f_\infty^{1-p}}}^p-1},$$
where $f_\infty(x)$ is the normalised equilibrium state of the equation, and $\norm{\cdot}_{L^p\pa{\R^d,\omega}}$ is the ($\omega-$)weighted $L^p$ norm. We will call these entropies \textit{$p$-entropies}. While the authors have achieved this goal in previous work, \cite{AEW18}, the technique we use to find the rate of decay in the presented work is an adaptation of the commonly used Bakry-Emery method by an introduction of a new functional arising from a natural ``decomposition'' of the underlying space on which the Fokker-Planck operators acts. Moreover, this technique requires minimal spectral properties of the governing linear operators, unlike \cite{AEW18} where a much more involved spectral study, as well as spectral theorems such as the Gearhart-Pr\"uss theorem, was needed. The price one needs to pay for this simplification, however, is that we deal with a non-degenerate diffusion matrix $\D$, though this restriction can be removed with the use of the recent work \cite{AEW23}. We will comment on it in the appropriate place in this work (see Remark \ref{rem:only_place}).

At times in this work we will veer away from the convention that $f$ is of unit mass and still use the $p$-entropy. We notice that if $f\in L^1_+\pa{\R}$\amit{, i.e. $f$ is in the cone %or subset 
	of non-negative functions in $L^1\pa{\R^d}$,} we have the following formula:
	\begin{equation}\label{eq:p_entropy_on_L_1}
		e_p\pa{f|f_\infty} = \frac{1}{p(p-1)}\pa{\norm{f}_{L^p\pa{\R^d,f_\infty^{1-p}}}^p-p\pa{\norm{f}_{L^1\pa{\R^d}}-1}-1} \geq 0.
	\end{equation}
Whenever we write $e_p\pa{f|f_\infty}$ in that case, we will assume implicitly that $f\in L^1_+\pa{\R^d}$. A simple application of H\"older inequality (see \eqref{eq:exponential_controls_L_1} in Appendix \ref{app:b}) shows that $\norm{f}_{L^1\pa{\R^d}}\leq \norm{f}_{L^p\pa{\R^d,f^{1-p}_{\infty}}}$ which implies that $e_p\pa{f|f_\infty}<\infty$ if and only if $\norm{f}_{L^p\pa{\R^d,f^{1-p}_{\infty}}}<\infty$.
%$$e_p\pa{f|f_\infty} \geq \frac{1}{p(p-1)}\pa{\norm{f}_{L^p\pa{\R^d,f_\infty^{1-p}}}^p-p\pa{\norm{f}_{L^p\pa{\R^d,f_\infty^{1-p}}}-1}-1}=\psi_p\pa{\norm{f}_{L^p\pa{\R^d,f_\infty^{1-p}}}}\geq 0.$$

Our main theorem for this paper is:
\begin{theorem}\label{thm:main}
Let $\D$ be a positive definite matrix and $\C$ be a positive stable matrix with
 \begin{equation}\nonumber
\mu:=\min\br{\Re\pa{\lambda}\;|\;\lambda\text{ is an eigenvalue of }\;\C}
\end{equation}
and denote by $n$ the maximal defect of the eigenvalues\footnote{To say that $n$ is the maximal defect is equivalent to the statement that the maximal size of Jordan blocks associated to the eigenvalue $\lambda$ is $n+1$.} associated to $\mu$. Let $f_0\in L^1_+\pa{\R^d}$ have a unit mass and finite $p-$entropy for some $p\in (1,2]$. Then, the solution to \eqref{eq:fokkerplanck} with initial data $f_0$ satisfies, for any $\epsilon>0$ and all $t\geq 0$,
%there exists an explicit $t_0(\epsilon)\geq 0$, that may become unbounded as $\epsilon$ goes to zero, such that for all $t\geq t_0\pa{\epsilon}$ we have that
\amit{\begin{equation}\label{eq:main}
	\begin{split}
	e_{p}\pa{f(t)|f_\infty} \leq\mathcal{C}_{p,\epsilon}&\max\Bigg(e_p\pa{f_0|f_\infty},\\
	&\rpa{p(p-1)e_p\pa{f_0|f_\infty}+1}^{\frac{2}{p}\pa{1+\epsilon(2-p)}}\Bigg) \pa{1+t^{2n}}e^{-2\mu t},
\end{split}
\end{equation}}
where $\mathcal{C}_{p,\epsilon}$ is an explicit constant that does not depend on $f_0$ but which may become unbounded as $\epsilon$ goes to zero. If $f_0$ has finite $2-$entropy then the above can be improved to
\begin{equation}\label{eq:main_p=2}
	\begin{split}
		e_{2}\pa{f(t)|f_\infty} \leq &\mathcal{C}_{2} e_2\pa{f_0|f_\infty}\pa{1+t^{2n}}e^{-2\mu t}, \quad t\geq 0,
	\end{split}
\end{equation}
with $\mathcal{C}_{2}$ being a fixed constant. 
\end{theorem}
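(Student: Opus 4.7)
The plan is to split the proof into two stages: first establish the cleaner estimate \eqref{eq:main_p=2} in the $p=2$ setting, and then bootstrap to the full range $p\in(1,2]$ by a short-time regularisation. This split naturally accounts for the two different structures on the right-hand side of \eqref{eq:main}: the $p=2$ case yields a linear dependence on the initial entropy, whereas the general $p$ case inherits a nonlinear dependence arising from the cost of upgrading $L^p$ initial data to an $L^2$ datum along a small initial interval of the flow.

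For the $p=2$ case I would work with the generalised Fisher information $I_2(f,g\,|\,f_\infty)$ advertised in the paper's title, taking $g$ to be an auxiliary function that respects the Jordan block structure of $\C^T$ on the generalised eigenspaces with real part $\mu$. A Bakry-Emery style commutation computation along the Fokker-Planck flow should produce a differential inequality of the schematic form
\begin{equation*}
\frac{d}{dt}I_2\bigl(f(t),g(t)\,|\,f_\infty\bigr) \leq -2\mu\,I_2\bigl(f(t),g(t)\,|\,f_\infty\bigr) + Q_{2n-1}(t)\,e^{-2\mu t},
\end{equation*}
where the polynomial perturbation $Q_{2n-1}$ of degree $2n-1$ is generated by the nilpotent parts of the Jordan decomposition associated with eigenvalues in $R_\mu$. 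Gronwall's lemma then yields the expected $(1+t^{2n})e^{-2\mu t}$ bound on $I_2$, and since $\D$ is positive definite a Poincare-type inequality against $f_\infty$ transfers this decay from the Fisher information level to $e_2$, giving \eqref{eq:main_p=2}.

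For general $p\in(1,2]$, the bootstrap relies on a hypercontractive estimate of the form $\|f(t_0)\|_{L^2(f_\infty^{-1})}^2 \leq K(t_0)\,\|f_0\|_{L^p(f_\infty^{1-p})}^{q(t_0)}$ valid for any fixed $t_0>0$, a standard consequence of the non-degenerate diffusion via a Gross-type log-Sobolev argument. Applying \eqref{eq:main_p=2} to $f(t_0)$ regarded as a new initial datum, exploiting the monotonicity of $e_p$ along the flow on $[0,t_0]$, and absorbing the $e^{2\mu t_0}$ prefactor into the geometric constant yields \eqref{eq:main}. The exponent $\tfrac{2}{p}\bigl(1+\epsilon(2-p)\bigr)$ and the blow-up of $\mathcal{C}_{p,\epsilon}$ as $\epsilon\to 0$ both trace back to the choice $t_0=t_0(\epsilon)$: sending $t_0\to 0^+$ pushes $q(t_0)$ towards the optimal endpoint $2/p$ but makes $K(t_0)$ diverge.

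The main obstacle I anticipate is not the bootstrap but the sharpness of the first step, namely pinning down the correct definition of $I_2(\cdot,\cdot\,|\,f_\infty)$ together with the correct auxiliary $g$ so that the polynomial perturbation in the Fisher information ODE has degree exactly $2n-1$, no larger. Any mismatch here would either inflate the polynomial factor in $t$ or degrade the exponential rate to $\mu-\epsilon$; avoiding both simultaneously is precisely what the $\epsilon$-losing Bakry-Emery framework of \cite{AE} could not do, and is where the novel two-argument Fisher information is expected to pay off.
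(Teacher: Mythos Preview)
Your proposal diverges from the paper's approach at the crucial point, and the divergence hides a genuine gap.

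\textbf{What the paper actually does.} The two-argument Fisher information is not used with an externally designed auxiliary $g$ encoding Jordan data. Instead, once hypercontractivity puts $f(t)$ into $L^2(\R^d,f_\infty^{-1})$, the paper performs the orthogonal decomposition $f=f_1+f_2$ with $f_1\in V_1$ (the span of first-order Hermite directions) and $f_2\in V_1^\perp$, and uses the elementary bound
\[
I_p^{\PP}(f|f_\infty)=\I_p^{\PP}(f,f_1+f_2|f_\infty)\le 2\bigl(\I_p^{\PP}(f,f_1|f_\infty)+\I_p^{\PP}(f,f_2|f_\infty)\bigr).
\]
The whole point of allowing $g\neq f$ is that $f_1$ and $f_2$ are sign-changing, so $I_p^{\PP}(f_i|f_\infty)$ is not even defined for $p<2$. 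The $(1+t^{2n})e^{-2\mu t}$ factor comes \emph{exactly} from the piece $\I_p^{\PP}(f,f_1|f_\infty)$, because on the finite-dimensional space $V_1$ the operator $L$ acts as the matrix $-\C$, so $|\nabla(f_1(t)/f_\infty)|$ is controlled by $\|e^{-\C t}\|\lesssim (1+t^n)e^{-\mu t}$ directly. The orthogonal piece $\I_p^{\PP}(f,f_2|f_\infty)$ is shown to decay \emph{strictly faster} than $e^{-2\mu t}$: the remainder term in the Bakry--Emery computation for $I_2^{\PP}$, combined with the Poincar\'e inequality and the condition $g_0\perp V_1$, yields an extra $d_{\min}>0$ in the rate. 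An interpolation between $\I_1^{\PP}$ and $I_2^{\PP}$ then handles general $p$.

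\textbf{The gap in your scheme.} Your Step~1 posits a differential inequality of the form
\[
\frac{d}{dt}I_2\le -2\mu\, I_2 + Q_{2n-1}(t)e^{-2\mu t}.
\]
But the coefficient $-2\mu$ is exactly what a fixed positive definite $\PP$ \emph{cannot} deliver in the defective case: the matrix inequality $\C^T\PP+\PP\C\ge 2\lambda\PP$ forces $\lambda<\mu$ whenever an eigenvalue with real part $\mu$ is defective (this is the content of Remark~\ref{rem:on_P} and the origin of the $\epsilon$-loss in \cite{AE}). Without $-2\mu$ on the right, Gronwall gives $e^{-2(\mu-\nu)t}$ times a polynomial, and you are back to the suboptimal rate. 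The paper escapes this trap precisely by \emph{not} asking the Bakry--Emery inequality to produce the sharp rate on all of $L^2$: it accepts $\lambda=\mu-\nu$ globally, but on $V_1^{\perp}$ it harvests an additional $d_{\min}$ from the otherwise discarded trace remainder, while on $V_1$ it bypasses $\PP$ entirely via the explicit matrix exponential.

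\textbf{On your bootstrap for general $p$.} Your hypercontractivity-then-$e_2$ route is essentially the strategy of the authors' earlier work \cite{AEW18}, not of this paper. It would recover \eqref{eq:main} once \eqref{eq:main_p=2} is in hand, but the paper instead proves a Fisher-information analogue (Theorem~\ref{thm:main_for_fisher}) directly for each $p$, using lower-contractivity (creation of a pointwise lower bound $f(t,x)\ge C e^{-(1/2+\eta)|x|^2}$) to control the singularity of $\psi_p''$ at zero, and only at the very end invokes a log-Sobolev inequality to pass from $I_p^{\PP}$ to $e_p$. So even here your route is not the paper's, though it is a legitimate alternative once Step~1 is fixed.
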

\begin{remark}\label{rem:lesser_conditions}
	We would like to point out a few observations:
	\begin{itemize}
	\item The family of functions $\br{\psi_p}_{1<p\leq 2}$ can be extended to $p=1$ by a simple limit procedure. In that case we obtain the generating function of the famous \textit{Boltzmann entropy}:
		$$\psi_1(y):=y\log y -y +1.$$
	This entropy will play an important role in our investigation. 
	\item One can also easily show that when $1\leq p_1\leq p_2 \leq 2$ we have that
	\begin{equation}\label{eq:monotonicity_of_entropies}
		e_{p_1}\pa{f|f_\infty} \leq \frac{p_2}{p_1}e_{p_2}\pa{f|f_\infty}
	\end{equation}	
	showing an almost monotonicity property for our family of entropies. We will mention this property again later and will call it ``monotonicity'' for simplicity.
	\item It is clear to see that our \amit{estimate} in \eqref{eq:main} is not optimal, especially when $f_0$ is very close to $f_\infty$ or when $t$ is close to $t=0$, yet it does give the correct sharp long time behaviour of the solution. We would like to mention that a way to explicitly compute the norm of the propagator for the Fokker-Planck equation can be found in \cite{ASS}.
	\end{itemize}
\end{remark}
A usual strategy to show theorems of the nature of Theorem \ref{thm:main} is to try and find a functional inequality between the entropy and its \textit{production}, defined as minus the dissipation of the entropy under the flow of the equation. Such a geometric inequality, when exists, together with the definition of the entropy production results in a differential inequality for the entropy which leads to explicit rates of convergence to equilibrium. This method is usually known as \textit{the entropy method}\footnote{It is worth to mention that in the case of the standard Fokker-Planck equation (i.e. $\D=\C={\bf{I}}$), the functional inequality that is recovered using the entropy method for the Boltzmann entropy is the famous log-Sobolev inequality.}. In most physically relevant cases one expects a functional inequality of the form
$$I\pa{f|f_\infty} \geq c e\pa{f|f_\infty}, $$
 \amit{where} $I$ \amit{is} the entropy production \amit{of the entropy $e$}, which implies an exponential rate of convergence. 
\\
More often than not when using the entropy method one finds that the entropy production has a ``better'' structure than the entropy (though it requires higher regularity). For example, in the setting of \eqref{eq:fokkerplanck}, the entropy production of the $\psi-$entropy, $e_\psi$, is the so-called $\psi-$\textit{Fisher Information}
\begin{equation}\label{eq:normalfisher}
	I_\psi(f|f_\infty)=\int_{\R^d}\psi^{\prime\prime}\pa{\frac{f(x)}{f_\infty(x)}}\nabla\pa{\frac{f(x)}{f_\infty(x)}}^T \D \nabla\pa{\frac{f(x)}{f_\infty(x)}}f_\infty(x) dx,
\end{equation}
which possesses a geometrically appealing quadratic structure in the gradient of $f$, independently of the generating function  $\psi$.\\
Relying on this observation, the \textit{Bakry-Emery method}, an extremely prolific and successful method, was devised: Consider the entropy production as the functional whose evolution we'd like to investigate and apply the entropy method to it (see \cite{BaEmD85,BaEmH84}). If one can find a geometric inequality between the entropy production and \textit{its} production then a simple time integration would yield the required geometric inequality for the original entropy method (i.e. for the entropy and its production). Moreover, as the inequality attained using this method implies the inequality we're looking for it is more, or at least as, geometrically fundamental. \\
The Bakry-Emery method lies at the heart of the modern investigation of convergence to equilibrium in many entropically governed and physically relevant equations, including the Fokker-Planck equation. See, for instance, \cite{AE,AMTU01,BaEmD85}. \\
The Bakry-Emery method was the starting point of the long time analysis of \eqref{eq:fokkerplanck} by Arnold and Erb in \cite{AE}. A closer look at the Fisher information, given by  \eqref{eq:normalfisher}, reveals a problem with the method when the diffusion matrix is degenerate. Indeed, in that case one can't expect a linear bound on $I_\psi(f|f_\infty)$ w.r.t.\ $e_\psi(f|f_\infty)$
to be valid in general, as $I_\psi(f|f_\infty)$ could be zero in many situations where the entropy is not\footnote{Moreover, an inequality of the form $$ I_\psi\pa{f|f_\infty} \geq \Phi\pa{ e_{\psi}\pa{f|f_\infty}}$$
is also impossible for positive functions $\Phi$ in such a case.}. \\
%Even the case where $\D$ is not singular, and the above inequality may be valid, brings complications when attempting to differentiate the Fisher information and close an appropriate functional inequality as prescribed in the Bakry-Emery method.\\
To deal with these problems, Arnold and Erb have utilised the idea of \textit{hypocoercivity}, namely a modification of the desired ``distance'' functional in such a way that the inherent geometry of the problem will be pushed to the surface. In this particular setting %and under a simple ``rescaling'' of the problem (more of which will be discussed briefly in \S \ref{sec:fokkerplanck}), 
the  \textit{modified} Fisher information the authors \amit{considered} was
\begin{equation}\label{eq:defoffisher}
	I_\psi^{\bf{P}}(f|f_\infty)=\int_{\R^d}\psi^{\prime\prime}\pa{\frac{f(x)}{f_\infty(x)}}\nabla\pa{\frac{f(x)}{f_\infty(x)}}^T {\bf{P}} \nabla\pa{\frac{f(x)}{f_\infty(x)}}f_\infty(x) dx,
\end{equation}
where $\PP$ is a positive definite matrix that is chosen \amit{in a way} which gives rise to Theorem \ref{thm:anton_erb}. We refer to \S \ref{sec:generalfisher} for further details on the choice of $\PP$.\\
The loss of $2\epsilon$ in the convergence rate of the defective case in Theorem \ref{thm:anton_erb}(ii), \textit{even when $\D$ is not degenerate}, can be attributed to the technique of finding $\PP$. It is worth to note that by allowing $\PP$ to also depend on the time variable $t$, Monmarch\'e has recovered \amit{in \cite{MoH19}} the sharp large time behaviour stated in Theorem \ref{thm:main}, though this result was attained via a differential inequality and not a functional inequality.

In \cite{AEW18} we focused our attention on attaining the sharp long time behaviour \amit{of $e_p$} in both the degenerate and defective case. As was previously mentioned, our investigation, while successful, has relied heavily on detailed spectral investigation of the Fokker-Planck operator $L$ in the Hilbertian setting of the space $L^2\pa{\R^d,f_\infty^{-1}}$ and other ``heavy'' tools from spectral theory. Utilising this investigation and tools we have managed to \amit{obtain} the desired convergence rate for $e_2$ and using the monotonicity of the family of entropies $\br{e_p}_{1<p\leq 2}$ (w.r.t.\ $p$) and a non-symmetric \textit{hypercontractivity} result\footnote{This refers to an embedding in "higher spaces", indexed by $p$. We have shown in \cite{AEW18} that if $e_p\pa{f_0|f_\infty}$ is finite for $1<p<2$ then the solution for equation \eqref{eq:fokkerplanck} will eventually (with explicit estimation on time) have finite $e_2$ entropy.} we achieved the same convergence rate for all $\br{e_p}_{1<p\leq 2}$. \\
This study, however, circumvented the entropy production completely as well as the Bakry-Emery method. The goal we set ourselves, whose result is this work, is to see if one can modify the Bakry-Emery method in a way that keeps its main idea intact, but allows one to use it in greater generality - which includes equation \eqref{eq:fokkerplanck}. 

To do so we start by noticing that in our setting the entropy production of the $\psi-$entropy is given by
$$I_\psi^{\bf{D}}(f|f_\infty)=\int_{\R^d}\underbrace{\psi^{\prime\prime}\pa{\frac{f(x)}{f_\infty(x)}}}_{\tiny{\begin{tabular}{c}$\psi$-related term -- \\ $f/f_\infty$ must (in most cases) \\ be positive \end{tabular}}}\underbrace{\nabla\pa{\frac{f(x)}{f_\infty(x)}}^T {\bf{D}} \nabla\pa{\frac{f(x)}{f_\infty(x)}}f_\infty(x)}_{\tiny{\begin{tabular}{c}$L^2$-like term --\\ sign of $f/f_\infty$ is irrelevant\end{tabular}}} dx.$$
The integrand of the Fisher information naturally ``breaks'' into two parts. This motivates us to consider different arguments in each of these two parts. The following definition, which we find to be a \textit{corner stone} of this work, expresses the above observation:
\begin{definition}\label{def:generalisedfisher}
	Let $\psi$ be a generating function (of relative entropy) and let $f$ and $g$ be given measurable functions with $f$ being positive and $g \in H^1_{\mathrm{loc}}\pa{\R^d}$. For a given positive semi-definite matrix ${\bf{P}}\in \R^{d\times d}$ we define the \textit{Generalised $\psi$-Fisher information}, $\mathcal{I}^{\bf{P}}_{\psi}$, with respect to a function $f_\infty$ as
	\begin{equation}\label{eq:generalisedfisher}
		\mathcal{I}^{\bf{P}}_{\psi}\pa{f,g|f_\infty} = \int_{\R^d}\psi^{\prime\prime}\pa{\frac{f(x)}{f_\infty(x)}}\nabla\pa{\frac{g(x)}{f_\infty(x)}}^T {\bm{P}}\nabla\pa{\frac{g(x)}{f_\infty(x)}} f_\infty(x)dx.
	\end{equation}
When $\psi=\psi_p$ with $1<p\leq 2$ we will use the notation $\mathcal{I}^{\bf{P}}_{\psi_p}=\mathcal{I}^{\bf{P}}_{p}$. As in the case of our family of entropies, $\br{e_p}_{1\leq p \leq 2}$, it is convenient to demand that $f$ will have the same mass as $f_\infty$.
\end{definition} 
It is worthwhile to emphasise at this point a key feature of the generalised Fisher information, one which we will rely on heavily in our investigation in this paper -- while the function $f$ \textit{must} be positive, the function $g$ \textit{need not} have any definite sign.

\subsection{An Outline of the Proof of our Main Theorem} 
%The presented work moves towards proving our main theorem, Theorem \ref{thm:main}, by slowly building up tools, refining and improving our understanding of the underlying functionals and associated inequalities. Putting technical details aside, 
The proof of our main theorem, Theorem \ref{thm:main}, is based on the following steps:
\begin{enumerate}[(i)]
	\item\label{item:description_of_proof_I} We \amit{will} show, following the ideas of the Bakry-Emery method, that the generalised Fisher information of a pair of solutions to the Fokker-Planck equation shares the same decay properties that the standard Fisher information has. \amit{More precisely, we will show that
		\begin{equation}\nonumber
			\frac{d}{dt}\mathcal{I}^{\bf{P}}_\psi\pa{f(t),g(t)|f_\infty}  \leq -2\lambda \mathcal{I}^{\bf{P}}_{\psi}\pa{f(t),g(t)|f_\infty},
		\end{equation}
	where $\PP$ and $\lambda>0$ are such that $\C^T {\bf{P}}+{\bf{P}}\C \geq 2\lambda {\bf P}$. This will automatically imply an $2\pa{\mu-\epsilon}$ exponential decay rate for the generalised Fisher information, similar to \eqref{eq:entropydecayAEdef}, with an appropriate choice of $\PP_{\epsilon}$ (Theorem \ref{thm:Fisher_decay}).
	}
	\item\label{item:description_of_proof_I.5} We will refine and improve \amit{the above} estimate for $\I^{\bf{P}}_{2}(f,g|f_\infty)$ when $g$ is ``spectrally far enough'' from the kernel of $L$ by extracting an additional positive term from the remainder term in this case. \amit{More precisely, we will show that
	\begin{equation}\nonumber
		\frac{d}{dt}I^{\bf{P}}_2 \pa{g(t)|f_\infty} \leq -2\pa{\lambda+{d}_{\mathrm{min}}}I^{\bf{P}}_{2}\pa{g(t)|f_\infty},
	\end{equation}
	when $\PP$ and $\lambda>0$ are such that $\C^T {\bf{P}}+{\bf{P}}\C \geq 2\lambda {\bf P}$, $d_{\mathrm{min}}$ is the smallest eigenvalue of the positive definite matrix $\D$, and $$\inner{g(t),\partial_{x_i}f_\infty}_{L^2\pa{\R^d,f_\infty^{-1}}} =0,\qquad i=1,\dots,d$$
	 (Theorem \ref{thm:Fisher_improved_decay})}.
	\item\label{item:description_of_proof_II} We will show that the generalised Fisher information of order $1<p<2$ can be estimated by an interpolation of $\I^{\bf{P}}_1$ and $\I^{\bf{P}}_2$ \amit{motivating us to focus on the investigation of $\I^{\bf{P}}_1$ and $\I^{\bf{P}}_2$ solely. More precisely, we will show that 
		\begin{equation}\nonumber
			\I^{\PP}_p\pa{f,g|f_\infty} \leq c_p\mathcal{I}^{\bf{P}}_{1}\pa{f,g|f_\infty}^{2-p}I^{\PP}_2\pa{g|f_\infty}^{p-1},
		\end{equation}
		for an explicit $c_p>0$ (Theorem \ref{thm:interpolation_p}). 
	} 
	\item \label{item:description_of_proof_III} \amit{Inspired by the above point, we will show that solutions to our Fokker-Planck equations that start from functions who have finite $p-$entropies will automatically create finite generalised Fisher informations of order $1$ and $2$ after some time (``regularisation'').} \amit{More precisely, we will show that for} any pair of initial data to \eqref{eq:fokkerplanck}, $f_0 \geq 0$ with unit mass and finite $p_1-$entropy and $g_0$ such that $\abs{g_0}$ has a finite $p_2-$entropy\amit{, with $p_1,p_2\in (1,2]$,} one can find an explicit $t_0$ after which the solutions $f(t)$ and $g(t)$ to the equation satisfy 
	\begin{itemize}
		\item $f(t),g(t)\in L^2\pa{\R^d,f^{-1}_{\infty}}$,
		\item $\mathcal{I}^{\bf{P}}_{2}(f(t),g(t)|f_\infty)<\infty$,
		\item $\mathcal{I}^{\bf{P}}_{1}(f(t),g(t)|f_\infty)<\infty$.
	\end{itemize}
	 We refer to the first property as  \textit{hypercontractivity}, the second as \textit{upper-contractivity}, and the third as \textit{lower-contractivity} (Theorem \ref{thm:contractivity}).%\footnote{Theorem \ref{thm:contractivity} only shows that $g(t)\in L^2\pa{\R^d,f^{-1}_{\infty}}$. However, as $p_2$ is an arbitrary number in $(1,2]$, $f_0$ satisfies the same conditions that enabled $g(t)$ to be in $ L^2\pa{\R^d,f^{-1}_{\infty}}$ and as such also belongs there after enough time has passed.}.
	\item \label{item:description_of_proof_IV} Combining the above with the $L^2\pa{\R^d,f_\infty^{-1}}$ decomposition  
	$$f=\underbrace{f_1}_{\tiny{\begin{tabular}{c}
			spectrally close \\ to the kernel \\ \amit{(finite dimenstional space)}
		\end{tabular}}}+\underbrace{f_2}_{\tiny{\begin{tabular}{c}
		spectrally far \\ from the kernel
	\end{tabular}}}$$ 
	and the fact that
	$$I^{\PP}_p\pa{f|f_\infty}=\I_p^{\PP}\pa{f,f|f_\infty}=\I_p^{\PP}\pa{f,f_1+f_2|f_\infty}$$
	$$ \leq 2\pa{\underbrace{\I_p^{\PP}\pa{f,f_1|f_\infty}}_{\amit{\tiny{ \begin{tabular}{c} explicit decay. $f_1$ belongs to \\ a finite dimensional space \\ implies a simple ODE connection \end{tabular}}}}+\underbrace{\I_p^{\PP}\pa{f,f_2|f_\infty}}_{\tiny{\text{improved/faster decay}}}}$$
	we will obtain a \textit{Fisher information variant} of our main theorem \amit{for $I_p^{\PP}$. More precisely, we will show that 
	\begin{equation}\nonumber
	\begin{gathered}
		I_p^{\PP}\pa{f(t)|f_\infty} \leq c_{p,\PP,\epsilon} \rpa{p(p-1)e_p\pa{f_0|f_\infty}+1}^{\frac{2}{p}\pa{1+\epsilon(2-p)}}
		\pa{1+t^{2n}} e^{-2\mu t}
	\end{gathered}
	\end{equation}
	after some time, when $\PP$ is such that $\C^T {\bf{P}}+{\bf{P}}\C > 2\pa{\mu-d_{\mathrm{min}}\pa{p-1}} {\bf P}$ and $f_0$ has finite $p-$entropy. This can be improved to
	\begin{equation}\nonumber
	\begin{gathered}
		I_2^{\PP}\pa{f(t)|f_\infty} \leq c_{\PP}e_2\pa{f_0|f_\infty}
		\pa{1+t^{2n}} e^{-2\mu t},
	\end{gathered}
	\end{equation}
	when $f_0$ has finite $2-$entropy
	(Theorem \ref{thm:main_for_fisher}). }
	\item The above \amit{together with the} convex Sobolev inequality
	\amit{$$e_p\pa{f|f_\infty} \leq C_{LS}I_p^{{\bf{I}}}\pa{f|f_\infty}$$}
	 will result in the proof of Theorem \ref{thm:main}. 
\end{enumerate}
 \amit{Looking closely at point \eqref{item:description_of_proof_IV} it is worth to note that }we will, in fact, prove a more fundamental result from which our main theorem will follow.

\subsection{The Structure of the Work}\label{subsec:strucrure}
 In \S\ref{sec:fokkerplanck} we will recall (without proofs) known facts about Fokker-Planck equations of the form \eqref{eq:fokkerplanck}. \S\ref{sec:generalfisher} will be devoted to the generalised Fisher information, defined in \eqref{eq:generalisedfisher}, and the study of its long time behaviour by means of the entropy method. In \S\ref{sec:hyper_hypo} we shall study the regularisation properties of the Fokker-Planck equation and the relationship between different $p-$generalised Fisher informations, which we will then use in \S\ref{sec:convergence} to show a variant of our main theorem for the $p-$Fisher information. This result will allow us to prove our main theorem in the short section \S\ref{sec:convergence_entropy}. In \S\ref{sec:finalremarks} we will conclude our study with a few remarks. A discussion about the decay properties of the generalised Fisher information in additional Fokker-Planck systems will appear in Appendix \ref{app:a} and several proofs that we have decided to postpone to keep the flow of the paper will appear in Appendix \ref{app:b}.

\section{Properties of the Fokker-Planck Equation}\label{sec:fokkerplanck}
In this section we will provide, mostly without proofs, known facts about Fokker-Planck equations of the form \eqref{eq:fokkerplanck} and their solutions. We shall also recall some spectral properties of the Fokker-Planck operator, $L$, in the appropriate setting. The proofs of these results can be found in \cite{AAS,AE}.
\begin{theorem}\label{thm:propertiesoffokkerplanck}
Assume that $\C$ and $\D$ satisfy conditions \eqref{item:cond_semipositive}-\eqref{item:cond_no_invariant_subspace_to_kernel} and consider the (possibly degenerate) Fokker-Planck equation \eqref{eq:fokkerplanck} with initial data $f_0\in L^1_+\pa{\R^d}$. Then:
\begin{enumerate}[(i)]
\item There exists a unique classical solution to the equation, $f\in C^\infty \pa{\R_+\times \R^d}$. Moreover, if $f_0\not\equiv 0$ then the solution is strictly positive for $t>0$.
\item The above solution satisfies $\int_{\R^d}f(t,x)dx=\int_{\R^d}f_0(x)dx$ (mass conservation).
\item If in addition $f_0\in L^{p}\pa{\R^d}$ for some $p\in[1,\infty]$, then $f\in C\pa{[0,\infty),L^p\pa{\R^d}}$. 
\end{enumerate}
\end{theorem}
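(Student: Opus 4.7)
\emph{Proof plan.} Because \eqref{eq:fokkerplanck} has constant diffusion and a linear drift, my plan is to derive an explicit Gaussian representation formula for the solution via Fourier transform and then read off all three assertions from it. Taking Fourier transform in $x$ converts \eqref{eq:fokkerplanck} into the first-order transport equation
\begin{equation*}
\partial_t \hat f(t,\xi) + (\C^T \xi)\cdot \nabla_\xi \hat f(t,\xi) = -\xi^T \D\,\xi\, \hat f(t,\xi),
\end{equation*}
whose characteristics are the integral curves of the linear flow $\dot\xi = \C^T \xi$. Solving by the method of characteristics yields
\begin{equation*}
\hat f(t,\xi) = \hat f_0\!\pa{e^{-t\C^T}\xi}\,\exp\!\pa{-\xi^T \K(t)\,\xi}, \qquad \K(t):=\int_0^t e^{-s\C}\,\D\, e^{-s\C^T}\,ds,
\end{equation*}
so that $f(t,\cdot)$ is expressed as an affine-Gaussian integral operator applied to $f_0$.

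For assertion (i), the crucial point is that the symmetric matrix $\K(t)$ is strictly positive definite for every $t>0$. This is a Kalman-type controllability statement: if $\xi^T\K(t)\xi = \int_0^t \abs{\D^{1/2}e^{-s\C^T}\xi}^2 \,ds = 0$, then $\D^{1/2}e^{-s\C^T}\xi\equiv 0$ on $[0,t]$, and repeated differentiation in $s$ at $s=0$ yields $(\C^T)^k\xi\in\ker\D$ for all $k\geq 0$, producing a $\C^T$-invariant subspace of $\ker\D$ unless $\xi=0$, contradicting condition \eqref{item:cond_no_invariant_subspace_to_kernel}. With $\K(t)\succ 0$, the exponential factor in the representation decays faster than any polynomial in $\xi$, so Fourier inversion produces a smooth, strictly positive Gaussian-type kernel $G_t(x,y)$ with $f(t,x)=\int_{\R^d}G_t(x,y)f_0(y)\,dy$. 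Smoothness in $t$ follows from that of the entries of $\K(t)$ and $e^{-t\C}$, giving $f\in C^\infty((0,\infty)\times\R^d)$, while strict positivity of $G_t$ gives the strict positivity of $f(t,\cdot)$ for nontrivial $f_0$. Uniqueness among classical solutions with the prescribed integrability is then automatic, since any such solution must have Fourier transform satisfying the same first-order linear transport equation with the same initial data.

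Assertion (ii) follows from the divergence form of $L$: integrating $\partial_t f = \operatorname{div}(\D\nabla f + \C x f)$ over a ball $B_R$ produces a surface flux that vanishes as $R\to\infty$ thanks to the Gaussian tails of $f(t,\cdot)$ and $\nabla f(t,\cdot)$ read off from the explicit representation. For assertion (iii), for each $t>0$ the kernel $G_t(x,y)$ is (after the natural drift change of variables) a probability density in $y$ that concentrates on the characteristic $y = e^{-t\C}x$ as $t\to 0^+$; strong $L^p$-continuity of $t\mapsto f(t,\cdot)$ then follows by the classical approximate-identity argument for affine-Gaussian semigroups, in full analogy with the heat semigroup.

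The main obstacle I anticipate is the positive-definiteness of $\K(t)$ for all $t>0$: this is a short but delicate linear-algebra argument in which condition \eqref{item:cond_no_invariant_subspace_to_kernel} is indispensable, and it is also what underlies both the hypoellipticity and the strict positivity. Once this is in hand, everything else amounts to standard manipulations with Gaussian kernels.
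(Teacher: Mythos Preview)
The paper does not give its own proof of this theorem: it is stated as a background result with the proofs deferred to the references \cite{AAS,AE}. Your approach via the Fourier representation and the resulting Gaussian fundamental solution is correct and is in fact the standard one used in those references; the explicit kernel you derive is precisely the formula \eqref{eq:exactsolution} that the paper quotes and uses later in \S\ref{subsec:contractivity}, and your Kalman-type argument for the positive definiteness of $\K(t)$ under condition \eqref{item:cond_no_invariant_subspace_to_kernel} is exactly the mechanism by which hypoellipticity and strict positivity enter. So there is nothing to contrast here: you have reproduced the intended argument.
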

\begin{theorem}\label{thm:equilibrium}
Assume that $\C$ and $\D$ satisfy conditions \eqref{item:cond_semipositive}-\eqref{item:cond_no_invariant_subspace_to_kernel}. Then there exists a unique stationary state $f_\infty\in L^1\pa{\R^d}$ to \eqref{eq:fokkerplanck} which satisfies $\int_{\R^d} f_\infty(x) dx = 1$. $f_\infty$ is of the form:
\begin{equation}\label{eq:equilibrium}
f_\infty(x)= c_{\K} e^{-\frac{1}{2}x^T \K^{-1} x},
\end{equation}
where $\K\in\R^{d\times d}$ is the unique positive definite solution to the continuous Lyapunov equation
\begin{equation}\nonumber
2\D=\C \K+\K \C^T,
\end{equation} 
and $c_{\K}$ is an appropriate normalisation constant.
\end{theorem}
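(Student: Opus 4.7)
The strategy is to make the Gaussian ansatz $f_\infty(x) = c\,e^{-\frac{1}{2} x^T \K^{-1} x}$ for some symmetric positive definite matrix $\K$ to be determined, motivated by the linearity of the drift $\C x$. Setting $\M := \K^{-1}$ and computing directly,
\[ \D \nabla f_\infty + \C x f_\infty = (\C - \D\M) x f_\infty, \]
so that
\[ L f_\infty = \Bigl[\tr(\C - \D\M) - x^T (\C - \D\M)^T \M x\Bigr] f_\infty. \]
Requiring $L f_\infty \equiv 0$ forces (i) the symmetric part of $(\C - \D\M)^T \M$ to vanish, which, after multiplying by $\K$ on both sides, becomes the continuous Lyapunov equation $\C\K + \K\C^T = 2\D$, and (ii) $\tr(\C - \D\M) = 0$. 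A short calculation shows that (ii) is automatic from (i): multiplying the Lyapunov equation on the right by $\M$ and taking the trace (via cyclicity) yields $\tr(\C) = \tr(\D\M)$. Thus the construction of $f_\infty$ is reduced to the analysis of the Lyapunov equation.

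Under positive stability of $\C$ (condition (B)), this equation admits the explicit solution
\[ \K = 2 \int_0^\infty e^{-s\C}\, \D\, e^{-s\C^T}\, ds, \]
whose integrand decays exponentially. Symmetry is inherited from $\D$, and uniqueness of a symmetric solution is classical, since the spectrum of the linear map $\K \mapsto \C\K + \K\C^T$ on symmetric matrices is $\{\lambda_i + \overline{\lambda_j}\}$, which contains no zero when $\Re(\lambda_i) > 0$ for all $i$. Positive semi-definiteness follows from the identity
\[ x^T \K x = 2 \int_0^\infty \bigl| \D^{1/2} e^{-s\C^T} x \bigr|^2\, ds, \]
and strict positivity is precisely where the no-invariant-subspace hypothesis (condition (C)) enters: if $x^T\K x = 0$, then $e^{-s\C^T} x \in \ker(\D)$ for all $s \geq 0$, so $\span\{e^{-s\C^T} x : s \geq 0\}$ is a $\C^T$-invariant subspace of $\ker(\D)$, forcing $x = 0$. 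The normalisation constant is then $c_\K = (2\pi)^{-d/2}(\det \K)^{-1/2}$, from the standard Gaussian integral.

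The most delicate step, and the one for which I would lean heavily on the references \cite{AE,AAS}, is the uniqueness of $f_\infty$ among all $L^1(\R^d)$ stationary states: the Gaussian above is one such state, but a priori others could exist. The natural argument exploits hypoellipticity of $L$ under conditions (A)--(C), which in this linear setting is equivalent to H\"ormander's bracket condition and is ensured by (C). Any $L^1$ stationary distribution is then automatically smooth, and a strong positivity / maximum-principle argument shows it is strictly positive. Combined with the spectral picture of $L$ in the weighted space $L^2(\R^d, f_\infty^{-1})$ --- where $0$ is a simple eigenvalue with eigenfunction $f_\infty$ --- and with mass conservation from Theorem \ref{thm:propertiesoffokkerplanck}, one concludes that the unit-mass kernel of $L$ in $L^1$ is spanned by the Gaussian $f_\infty$ constructed above. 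This ergodicity/uniqueness step is where the real work lies; the Gaussian construction itself is essentially algebraic.
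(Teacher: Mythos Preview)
The paper does not supply its own proof of this theorem: Section~\ref{sec:fokkerplanck} explicitly states that these are ``known facts\ldots mostly without proofs'' and defers to \cite{AAS,AE}. Your sketch is essentially the standard argument found in those references---the Gaussian ansatz reducing to the Lyapunov equation, the integral representation $\K = 2\int_0^\infty e^{-s\C}\D e^{-s\C^T}\,ds$, and the use of condition~(C) to secure strict positive definiteness---and it is correct. Your honest flagging of the uniqueness step as the part requiring the hypoellipticity/ergodicity machinery from \cite{AE} is exactly right; that is where the cited works do the heavy lifting.
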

One can rewrite the Fokker-Planck operator $L$  by using the above stationary state. Indeed, 
\begin{equation}\label{eq:FPrecast-gen}
  Lf(x)=\text{div}\pa{f_\infty(x)\C\K \nabla \pa{\frac{f(x)}{f_\infty(x)}}}\,.
\end{equation}
An extremely useful property of \eqref{eq:fokkerplanck} is that it can be greatly simplified with a linear change of variables. The following can be found in \cite{AAS,ASS}:
\begin{theorem}\label{thm:simpliedDC}
Consider the Fokker-Planck equation \eqref{eq:fokkerplanck} with associated matrices $\C$ and $\D$ that satisfy conditions \eqref{item:cond_semipositive}--\eqref{item:cond_no_invariant_subspace_to_kernel}. Then there exists a \textit{linear} change of variable of the form
$$x={\bf{T}}y,\quad \quad {\bf{T}}\text{ is an invertible $d\times d$ matrix}$$
which transforms \eqref{eq:fokkerplanck} to a Fokker-Planck equation of the same form in the variable $y$, with diffusion and drift matrices $\widetilde{\D}$ and $\widetilde{\C}$ such that
\begin{equation}\label{eq:diagD}
\widetilde{\D}=\operatorname{diag}\br{d_1,d_2,\dots,d_m,0\dots,0}
\end{equation} 
with $d_j>0$, $j=1,\ldots,m$, $m:=\operatorname{rank}(\D)$, being the positive eigenvalues of $\D$, and 
$$\widetilde{\C}_s=\frac{\widetilde{\C}+\widetilde{\C}^T}{2}=\widetilde{\D}.$$
 In these new variables the equilibrium $f_\infty$ is just the standard Gaussian (i.e. $\K=\II$).
\end{theorem}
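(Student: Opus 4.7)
The plan is to construct the transformation $\T$ in two stages: first ``whiten'' the equilibrium so that $\widetilde{\K}=\II$, and then rotate by an orthogonal matrix to diagonalise the resulting diffusion matrix. The composition of two invertible linear maps is invertible, giving the desired $\T$.

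First I would work out how the coefficient matrices of \eqref{eq:fokkerplanck} transform under the substitution $x=\T y$. Setting $g(t,y):=f(t,\T y)$ and using $\nabla_x=\T^{-T}\nabla_y$ together with the constancy of $\D$ and $\C$, a direct chain-rule calculation shows that $g$ solves a Fokker-Planck equation of the same form with
\[
\widetilde{\D}=\T^{-1}\D\T^{-T}, \qquad \widetilde{\C}=\T^{-1}\C\T.
\]
Pulling back the equilibrium gives $\widetilde{\K}=\T^{-1}\K\T^{-T}$, so $\widetilde{f_\infty}(y)\propto e^{-\frac{1}{2} y^T \widetilde{\K}^{-1} y}$. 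Conditions \eqref{item:cond_semipositive}--\eqref{item:cond_no_invariant_subspace_to_kernel} are preserved, since rank, spectrum and (co)invariant subspaces are all invariant under conjugation by an invertible matrix.

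Next I would choose $\T_1:=\K^{1/2}$ (the symmetric positive-definite square root of $\K$); this produces $\widetilde{\K}_1=\II$. Conjugating the Lyapunov identity $2\D=\C\K+\K\C^T$ on the left by $\T_1^{-1}$ and on the right by $\T_1^{-T}$ immediately yields
\[2\widetilde{\D}_1=\widetilde{\C}_1+\widetilde{\C}_1^T,\]
i.e.\ $\widetilde{\D}_1$ is already the symmetric part of $\widetilde{\C}_1$. Since $\widetilde{\D}_1$ is symmetric positive semi-definite of rank $m=\rank(\D)$, there is an orthogonal $\mathbf{U}$ with $\mathbf{U}^T\widetilde{\D}_1\mathbf{U}=\operatorname{diag}(d_1,\dots,d_m,0,\dots,0)$ and $d_j>0$. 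Taking $\T:=\T_1\mathbf{U}$ and using $\mathbf{U}^T=\mathbf{U}^{-1}$, the two key identities survive: $\widetilde{\K}=\mathbf{U}^T\II\,\mathbf{U}=\II$, and $(\widetilde{\C})_s=\mathbf{U}^T(\widetilde{\C}_1)_s\mathbf{U}=\mathbf{U}^T\widetilde{\D}_1\mathbf{U}=\widetilde{\D}$.

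The main obstacle is essentially bookkeeping in step~1: one must check that the drift contribution $\operatorname{div}_x(\C x f)$ transforms correctly, noting that the trace term $\tr(\C)=\tr(\widetilde{\C})$ and the linear term $(\C x)\cdot\nabla_x f=(\widetilde{\C} y)\cdot\nabla_y g$ reassemble into $\operatorname{div}_y(\widetilde{\C} y\, g)$. One should also verify that prepending the orthogonal rotation $\mathbf{U}$ in step~3 does not undo the whitening of $\K$ — which is exactly why the second stage must be orthogonal rather than arbitrary. There are no analytic obstructions, since the reduction is purely a linear-algebraic change of variables.
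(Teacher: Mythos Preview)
The paper does not actually prove this theorem; it only states it and refers the reader to \cite{AAS,ASS}. So there is no in-paper argument to compare against.

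Your two-stage construction --- first whiten $\K$ via $\T_1=\K^{1/2}$, then orthogonally diagonalise the resulting diffusion matrix --- is the standard route and is correct. The transformation rules $\widetilde{\D}=\T^{-1}\D\T^{-T}$, $\widetilde{\C}=\T^{-1}\C\T$, $\widetilde{\K}=\T^{-1}\K\T^{-T}$ are right, and conjugating the Lyapunov identity by $\K^{-1/2}$ indeed gives $(\widetilde{\C}_1)_s=\widetilde{\D}_1$, which is then preserved by any orthogonal $\mathbf{U}$.

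One small point worth flagging: the theorem as stated claims the diagonal entries $d_j$ are ``the positive eigenvalues of $\D$'', but your construction produces $\widetilde{\D}=\mathbf{U}^T\K^{-1/2}\D\K^{-1/2}\mathbf{U}$, whose nonzero eigenvalues are those of $\K^{-1/2}\D\K^{-1/2}$, not of $\D$ itself. A congruence preserves rank and signature (Sylvester) but not eigenvalues in general. This is a slight imprecision in the theorem's wording rather than a gap in your argument; the substantive conclusions --- $\widetilde{\D}$ diagonal of rank $m$, $\widetilde{\C}_s=\widetilde{\D}$, and $\widetilde{\K}=\II$ --- are all established.
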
 

As the linear transformation detailed in Theorem \ref{thm:simpliedDC} does not change the rate of decay of any $\psi-$entropy (or its production), we will assume from this point onwards that we are in the coordinate system where $\D$ is of form \eqref{eq:diagD} and equals $\C_s$.\\
Lastly, in preparation to the exploration of the convergence in $L^2\pa{\R^d,f_\infty^{-1}}$, we recall the following spectral theorem from \cite{AE}:
\begin{theorem}\label{thm:spectraldecomposition}
Assume that the drift and diffusion matrices $\C$ and $\D$ satisfy conditions \eqref{item:cond_semipositive}-\eqref{item:cond_no_invariant_subspace_to_kernel}. Denote by 
\begin{equation}\label{eq:defVm}
V_m:=\span\br{\partial_{x_1}^{\alpha_1}\dots \partial_{x_d}^{\alpha_d}f_\infty(x)\ \Big|\ \alpha_1,\dots,\alpha_d\in \N\cup\br{0}, \sum_{i=1}^d \alpha_i=m}, \quad m\in\N\cup \{0\}.
\end{equation}
Then 
\begin{enumerate}[(i)]
	\item $\br{V_m}_{m\in \N\cup \br{0}}$ are mutually orthogonal finite dimensional spaces in $L^2\pa{\R^d, f_\infty ^{-1}}$.
	\item \begin{equation}\nonumber
		L^2\pa{\R^d,f_\infty^{-1}}= \bigoplus_{m\in\N\cup\br{0}}V_m.
	\end{equation}
	\item $V_m$ are invariant under $L$ and its adjoint (and thus under the flow of \eqref{eq:fokkerplanck}).
\end{enumerate}
Moreover, the spectrum of $L$ satisfies
\begin{equation}\nonumber
\sigma\pa{L}=\bigcup_{m\in\N\cup\br{0}}\sigma\pa{L\vert_{V_m}},
\end{equation}
and
\begin{equation}\nonumber
		\sigma\pa{L\vert_{V_m}}=\br{-\sum_{i=1}^d \alpha_i \lambda_i \ \Big|\ \alpha_1 ,\dots,\alpha_d\in\N\cup\br{0}, \sum_{i=1}^d \alpha_i=m},
\end{equation}
where $\br{\lambda_j}_{j=1,\dots,d}$ are the eigenvalues (with possible multiplicity) of the matrix $\C$. The eigenfunctions (or eigenfunctions and generalised eigenfunctions in the case $\C$ is defective) of $\br{L\vert_{V_m}}_{m\in \N\cup \br{0}}$ form a basis for $L^2\pa{\R^d,f_\infty^{-1}}$.
\end{theorem}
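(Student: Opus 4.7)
My plan is to reduce everything to the standard Gaussian case and then exploit the structure of Hermite polynomials. By Theorem~\ref{thm:simpliedDC}, I may assume that $\K=\II$, so that $f_\infty(x)=(2\pi)^{-d/2}e^{-|x|^2/2}$. The linear change of variables is an isometric isomorphism of the weighted $L^2$ spaces (up to a Jacobian constant), so all three assertions (i)--(iii) and the spectral identity transfer between the two settings.

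The first step is to introduce the conjugated operator $\mathcal{L}$ defined by $L(Pf_\infty)=f_\infty\,\mathcal{L}P$. Using $\nabla f_\infty=-xf_\infty$, a short calculation (exploiting $\D=\C_s$ and the antisymmetry of $\C_a:=\C-\C_s$, which kills $x^T\C_a x$) produces the clean Ornstein--Uhlenbeck form
\begin{equation*}
  \mathcal{L}P \;=\; \tr(\D\,\hess P) - x^T\C\,\nabla P.
\end{equation*}
From the Rodrigues-type identity $\partial^\alpha f_\infty=(-1)^{|\alpha|}H_\alpha(x)f_\infty(x)$, where $H_\alpha$ is the multivariate Hermite polynomial of total degree $|\alpha|$, it follows that $V_m$ is exactly $\{f_\infty\cdot Q : Q\in\mathcal{H}_m\}$, with $\mathcal{H}_m$ the finite-dimensional span of degree-$m$ Hermite polynomials. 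Since the $H_\alpha$ are known to form an orthogonal basis of $L^2(\R^d,f_\infty)$, and the multiplication map $g\mapsto gf_\infty$ is an isometry from $L^2(\R^d,f_\infty)$ onto $L^2(\R^d,f_\infty^{-1})$, both (i) and (ii) are immediate: orthogonality follows from Hermite orthogonality at different degrees, and density follows from the density of polynomials in Gaussian $L^2$.

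The crucial and most delicate step is (iii). The formula for $\mathcal{L}$ shows that $\mathcal{L}$ preserves the filtration by polynomial degree, but it does \emph{not} preserve a fixed homogeneous degree, because $\tr(\D\,\hess P)$ lowers the degree by two. Thus $V_m$ cannot be identified with "homogeneous polynomials of degree $m$ times $f_\infty$", and one must instead use the orthogonality characterisation: $V_m$ is the orthogonal complement of $V_0\oplus\cdots\oplus V_{m-1}$ \emph{inside} the subspace $\{Qf_\infty : \deg Q\le m\}$. I would then show, for $Q\in\mathcal{H}_m$ and any polynomial $R$ with $\deg R<m$, that $\langle \mathcal{L}Q,R\rangle_{L^2(f_\infty)}=\langle Q,\mathcal{L}^*R\rangle_{L^2(f_\infty)}$ vanishes, because $\mathcal{L}^*$ (the generator of the time-reversed process) likewise preserves polynomial degree, so $\mathcal{L}^*R$ still has degree $<m$ and is therefore orthogonal to $Q\in\mathcal{H}_m$. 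This yields $\mathcal{L}(\mathcal{H}_m)\subset\mathcal{H}_m$, i.e.\ $L(V_m)\subset V_m$; the same argument applied to $L^*$ gives invariance under the adjoint.

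Finally, for the spectrum, I would observe that $\mathcal{L}$ is lower-triangular with respect to the degree grading: modulo polynomials of degree $\le m-1$, $\mathcal{L}$ acts on the homogeneous degree-$m$ part as $P\mapsto -x^T\C\,\nabla P$. Passing to a basis in which $\C$ is in Jordan normal form, this latter operator is upper-triangular on homogeneous monomials with diagonal entries $\sum_i\alpha_i\lambda_i$ over $|\alpha|=m$, which pins down $\sigma(L\vert_{V_m})$ exactly as claimed; the Jordan blocks of $\C$ translate, via combinatorics of the multi-index action, into the defects of $L\vert_{V_m}$, yielding the final basis statement. The total spectrum is the union over $m$, since the $V_m$ decompose the whole Hilbert space. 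The main obstacle is the orthogonality-based invariance argument in (iii), which must be handled with care precisely because the natural $\mathcal{L}$-invariant subspaces are the \emph{filtered} spaces $\bigoplus_{k\le m}V_k$, and extracting the finer graded invariance of each individual $V_m$ requires the adjoint trick just described.
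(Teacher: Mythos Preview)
The paper does not prove this theorem; it is stated in \S\ref{sec:fokkerplanck} as a known result with the proof attributed to \cite{AE}. There is therefore no in-paper proof to compare against.

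That said, your proposal is correct and is essentially the standard argument one finds in that reference. The reduction to $\K=\II$ via Theorem~\ref{thm:simpliedDC}, the conjugation $L(Pf_\infty)=f_\infty\mathcal{L}P$ with $\mathcal{L}P=\tr(\D\,\hess P)-x^T\C\nabla P$, and the identification of $V_m$ with $f_\infty\cdot\mathcal{H}_m$ via the Rodrigues formula are all exactly as in the standard treatment, and (i)--(ii) follow immediately from Hermite orthogonality and completeness in $L^2(f_\infty)$. Your handling of (iii) is the right one: the key computation is that the $L^2(f_\infty)$-adjoint is $\mathcal{L}^*=\tr(\D\,\hess)-x^T\C^T\nabla$ (using $\D=\C_s$, $\tr\C_a=0$, and $x^T\C_a x=0$), which also preserves the degree filtration; your orthogonality argument then correctly upgrades filtration-invariance of $\mathcal{L}$ to invariance of each graded piece $\mathcal{H}_m$. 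The spectral identification via the leading-homogeneous-part isomorphism $\mathcal{H}_m\cong\{\text{homogeneous degree }m\}$ and a Jordan basis for $\C$ is likewise standard and correct.
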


\begin{remark}\label{rem:identification_of_V_m}
	Theorem \ref{thm:spectraldecomposition} expresses precisely the idea that in $L^2\pa{\R^d,f_\infty^{-1}}$ the Fokker-Planck operator decomposes the space into finite dimensional orthogonal spaces which are more and more ``spectrally distanced'' from $V_0 = \ker \pa{L}$ -- the span of the equilibrium of the equation. More precisely, the spectral gap of \eqref{eq:fokkerplanck} satisfies $\mu = \min \br{\Re \lambda \mid \lambda \in \sigma(L|_{V_1})}$. Similarly, we have $\min\{\Re \lambda \mid \lambda \in \sigma(L|_{V_m})\} = m\mu$.
\end{remark}
%%%%%%%%%%%%%%%%%%%%%%%%%%%%%%%%%%%%%%%%%%%%%%%%%%%%%%%%%%%%%%%%%%%%%%%%%%%%%%%%%%%%%%%%%%%%%%%%%%%%%%%%%%%%%%%%%

\section{The Generalised Fisher Information}\label{sec:generalfisher}
This section explores the properties of the generalised Fisher information under the evolution of the Fokker-Planck equation \eqref{eq:fokkerplanck}, which will be remarkably reminiscent of the properties of the standard Fisher information.\\
We remind the reader that the generalised Fisher information for a given generating function $\psi$ and a positive semi-definite matrix $\PP$ was defined in Definition \ref{def:generalisedfisher} and is given by
 \begin{equation}\nonumber
 	\mathcal{I}^{\bf{P}}_{\psi}\pa{f,g|f_\infty} = \int_{\R^d}\psi^{\prime\prime}\pa{\frac{f(x)}{f_\infty(x)}}\nabla\pa{\frac{g(x)}{f_\infty(x)}}^T {\bm{P}}\nabla\pa{\frac{g(x)}{f_\infty(x)}} f_\infty(x)dx,
 \end{equation}
for a positive function $f$ such that $\int_{\R^d}f(x)dx=\int_{\R^d}f_\infty(x)dx$ and $g\in H^1_{\mathrm{loc}}\pa{\R^d}$.\\
A few points one can immediately make are: 
 \begin{itemize}
 \item As $\psi^{\prime\prime}$ is generally defined \textit{only} on $\R_+$ (with a few exceptions such as $\psi_2$), the function $f$ of the pair $\pa{f,g}$ must be positive. The function $g$, on the other hand, does not need to have a definite sign. 
 \item The roles of $f$ and $g$ are significantly different and can't be interchanged.
 \item For any positive function $f$ and any generating function $\psi$
 $$\mathcal{I}^{\bf{P}}_\psi\pa{f,f|f_\infty}=I^{\bf{P}}_\psi(f|f_\infty).$$
 Moreover, for $\psi=\psi_2$ we have that 
 $$\mathcal{I}^{\bf{P}}_{2}\pa{f,g|f_\infty} = \int_{\R^d}\nabla\pa{\frac{g(x)}{f_\infty(x)}}^T {\bm{P}}\nabla\pa{\frac{g(x)}{f_\infty(x)}} f_\infty(x)dx=I^{\PP}_2\pa{g|f_\infty}.$$
 \end{itemize}
In the remainder of this section we will explore the convergence to zero of the generalised Fisher information under the Fokker-Planck flow of both $f$ and $g$. We will focus on two cases:
\begin{itemize}
	\item\textit{Arbitrary $\psi$}: In this case we will recover the same decay result that was found in \cite{AE}.
	\item \textit{$\psi=\psi_2$}: \sloppy In this case we will rely on the spectral decomposition of $L^2\pa{\R^d,f_{\infty}^{-1}}$ by $L$ and show that if the initial condition is spectrally farther than $V_1$ from the equilibrium space $V_0$, i.e. if $f_0 \in V_1^{\perp}$, then the convergence of the generalised Fisher information is faster than the previous estimate.
\end{itemize}
We would like to remind the reader that we are assuming that $\D$ and $\C$ are such that $\D$ is diagonal and $\D=\C_s$ (as remarked after Theorem \ref{thm:simpliedDC}).
\begin{theorem}[Arbitrary $\psi$]\label{thm:Fisher_decay}
Consider equation \eqref{eq:fokkerplanck} with a positive semi-definite diagonal matrix $\D$ and a positive stable matrix $\C$ such that $\D=\C_s$, and such that conditions \eqref{item:cond_semipositive}--\eqref{item:cond_no_invariant_subspace_to_kernel} are satisfied. Let $f_0$ be a unit mass element of $ L^1_+\pa{\R^d}$ and $g_0\in L^1\pa{\R^d}$ be given such that there exists $t_0\geq 0$ such that $\mathcal{I}_\psi^{\PP}(f(t_0), g(t_0)|f_\infty) < \infty$. Let $f,g$ be the solutions to the (possibly degenerate) Fokker-Planck equation with initial data $f_0,g_0$. If $\PP$ is a positive definite matrix that satisfies 
\begin{equation}\label{eq:CPcondition}
\C^T {\bf{P}}+{\bf{P}}\C \geq 2\lambda {\bf P},
\end{equation}
for some $\lambda>0$, then for any $t>t_0$
\begin{equation}\label{eq:Fisher_decay_in_functional_form}
	\frac{d}{dt}\mathcal{I}^{\bf{P}}_\psi\pa{f(t),g(t)|f_\infty}  \leq -2\lambda \mathcal{I}^{\bf{P}}_{\psi}\pa{f(t),g(t)|f_\infty},
\end{equation}
and as such for any $t\geq t_0$ 
\begin{equation}\label{eq:Fisher-decay}
\mathcal{I}^{\bf{P}}_\psi \pa{f(t),g(t)|f_\infty} \leq \mathcal{I}^{\bf{P}}_{\psi}\pa{f(t_0),g(t_0)|f_\infty}e^{-2\lambda (t-t_0)}.
\end{equation}
\end{theorem}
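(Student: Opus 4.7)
The plan is to establish the pointwise differential inequality \eqref{eq:Fisher_decay_in_functional_form} by a bilinear generalisation of the Bakry-\'Emery computation carried out for the standard Fisher information in \cite{AE,AMTU01}, and then to deduce \eqref{eq:Fisher-decay} by Grönwall's lemma.

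To set things up, I would introduce $u := f/f_\infty$ and $v := g/f_\infty$. Using \eqref{eq:FPrecast-gen} and the fact that $\K = \II$ in the working coordinates, both $u$ and $v$ satisfy the backward Kolmogorov equation
$$\partial_t u = L^{*} u, \qquad L^{*} \phi := \frac{1}{f_\infty}\mathrm{div}\bigl(f_\infty\,\C\,\nabla\phi\bigr) = \mathrm{tr}(\C\,\nabla^2 \phi) - (\C x)^T \nabla \phi,$$
and likewise for $v$. A componentwise calculation yields the commutator identity
$$\nabla\bigl(L^{*}\phi\bigr) = L^{*}\bigl(\nabla\phi\bigr) - \C\,\nabla\phi,$$
which captures how the linear drift couples to spatial derivatives. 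By Theorem \ref{thm:propertiesoffokkerplanck} the solutions $u,v$ are $C^\infty$ on $(t_0,\infty)\times\R^d$ with $u > 0$, so the following computations are classical pointwise; a standard truncation/density argument, together with the weak lower semi-continuity of $\I_\psi^\PP$ in its second variable, would then justify the integrations by parts for general data satisfying only $\I_\psi^\PP(f(t_0),g(t_0)|f_\infty) < \infty$.

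Next, I would differentiate $\I_\psi^\PP(f,g|f_\infty)$ in time under the integral sign, apply the commutator identity, and integrate by parts repeatedly against the weight $f_\infty\,dx$ using $\int \phi\,(L^{*}\eta)\,f_\infty\,dx = -\int\langle\nabla\phi,\C\nabla\eta\rangle f_\infty\,dx$. After reorganising, I expect to arrive at the decomposition
$$\frac{d}{dt}\I_\psi^\PP(f,g|f_\infty) = -\int \psi''(u)\bigl\langle \nabla v,(\C^T\PP+\PP\C)\nabla v\bigr\rangle f_\infty\,dx \;-\; 2\,R(u,v),$$
where $R(u,v)$ is a pointwise (in $x$) quadratic form in the entries of $\nabla^2 v$ and $\nabla u$, with coefficients involving $\psi''(u),\psi'''(u),\psi''''(u)$ and the matrices $\PP,\D$. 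The matrix hypothesis \eqref{eq:CPcondition} immediately bounds the first term by $-2\lambda\,\I_\psi^\PP(f,g|f_\infty)$, so the whole argument reduces to verifying $R(u,v)\geq 0$; Grönwall's lemma applied on $[t_0,t]$ then delivers \eqref{eq:Fisher-decay}.

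The main obstacle is this pointwise non-negativity $R(u,v)\geq 0$. I expect $R$ to split into three contributions: a \emph{Hessian} piece $\psi''(u)\,\mathrm{tr}\bigl((\nabla^2 v)\D(\nabla^2 v)\PP\bigr)$, which is nonnegative because $\D$ is positive semi-definite and $\PP$ is positive definite; a \emph{quartic} piece proportional to $\psi''''(u)\,\langle\nabla u,\D\nabla u\rangle\,\langle\nabla v,\PP\nabla v\rangle$, again manifestly nonnegative; and a \emph{cross} piece of order $\psi'''(u)$ coupling $\nabla u$ to $\nabla^2 v$. The admissibility condition $(\psi''')^2 \leq \tfrac{1}{2}\psi''\psi''''$ from \eqref{eq:psicondition} is precisely what is needed to absorb the cross piece into the two nonnegative ones via a pointwise Cauchy-Schwarz inequality; the factor $\tfrac{1}{2}$ supplies exactly the margin required to cope with the bilinear structure $u \neq v$, which is the genuine novelty compared with the classical $f=g$ computation of \cite{AE,AMTU01}, where $\nabla u$ and $\nabla v$ coincide and the cross piece is automatically a perfect square. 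Once $R\geq 0$ is verified, \eqref{eq:Fisher_decay_in_functional_form} follows, Grönwall supplies \eqref{eq:Fisher-decay}, and the density argument mentioned above removes any auxiliary regularity used to make the computation rigorous.
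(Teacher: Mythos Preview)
Your plan is correct and mirrors the paper's approach: differentiate $\I_\psi^\PP$, integrate by parts to extract the $(\C^T\PP+\PP\C)$ contribution, and show the remaining terms are nonnegative pointwise. The paper packages that remainder as $2\int\tr(\X\Y)f_\infty\,dx$ with the $2\times2$ matrices
\[
\X=\begin{pmatrix}\psi''&\psi'''\\ \psi'''&\tfrac12\psi''''\end{pmatrix},\qquad
\Y=\begin{pmatrix}\tr\bigl(\PP J\D J\bigr) & (\nabla u)^T\D J\PP\,\nabla v\\ (\nabla u)^T\D J\PP\,\nabla v & (\nabla u)^T\D\,\nabla u\cdot(\nabla v)^T\PP\,\nabla v\end{pmatrix},
\]
where $J=\nabla^2 v$, and then verifies $\X\ge0$ (this is exactly admissibility \eqref{eq:psicondition}) and $\Y\ge0$; since $\tr(\X\Y)\ge0$ for two positive semi-definite matrices, the differential inequality follows. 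This $2\times2$ device encodes precisely the Cauchy--Schwarz/admissibility absorption you describe.

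Two points worth sharpening. First, the step $\det\Y\ge0$, i.e.\
\[
\bigl((\nabla u)^T\D J\PP\,\nabla v\bigr)^2\le \tr\bigl(\PP J\D J\bigr)\cdot(\nabla u)^T\D\,\nabla u\cdot(\nabla v)^T\PP\,\nabla v,
\]
is the genuinely nontrivial ingredient in the remainder estimate; the paper obtains it by writing the left side as $\tr(\A^T\B)^2$ for suitable $\A,\B$ and invoking Cauchy--Schwarz for the Frobenius inner product. Your proposal should make this step explicit rather than subsume it under ``pointwise Cauchy--Schwarz''. Second, your remark that in the classical $f=g$ case the cross piece is ``automatically a perfect square'' is not correct: the identical $\X,\Y\ge0$ argument, with the same factor $\tfrac12$ in \eqref{eq:psicondition}, is already what is used in \cite{AE,AMTU01}. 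The bilinear generalisation here does not require any extra margin --- the novelty is that $\nabla u$ and $\nabla v$ now enter $\Y$ asymmetrically, but $\det\Y\ge0$ still holds by the same trace Cauchy--Schwarz.
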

%Notice that, due to the last point of Remark \ref{rem:aboutgenearlisedfisher}, the r.h.s.\ of \eqref{eq:Fisher-decay} may be $\infty$ at $t_0=0$.
\begin{remark}\label{rem:generalised_well_defined_as_f_positive}
	The fact that $f_0\in L^1_+\pa{\R^d}$ and has a unit mass implies, according to Theorem \ref{thm:propertiesoffokkerplanck}, that $f(t)$ is positive for all $t>0$. Moreover, $f,g\in C^\infty\pa{\R_+\times \R^d}$, so no regularity issues arise. Thus, the generalised Fisher information in the above theorem is well defined (taking values in $[0, \infty]$).
\end{remark}
\begin{remark}\label{rem:on_P}
	For a given positive stable $\C$, the supremum of the attainable $\lambda$ in \eqref{eq:CPcondition} is $\mu$, the spectral gap of $\C$. If all $\lambda\in R_\mu$ are non-defective, $\lambda = \mu$ can actually be reached if $\PP>0$ is chosen appropriately (for its construction see the proof of Lemma 4.3 in \cite{AE}).
\end{remark}

\begin{remark}\label{rem:extension_for_potentials}
	It is worth to mention that the results of Theorem \ref{thm:Fisher_decay} remain valid in far more general cases which include possibilities for the matrix $\D$ to be $x$-dependent and for the drift term to be nonlinear in $x$. We refer the interested reader to such result in Appendix \ref{app:a}.
\end{remark}
\begin{theorem}[$\psi=\psi_2$]\label{thm:Fisher_improved_decay}
Consider equation \eqref{eq:fokkerplanck} with a positive definite diagonal matrix $\D$ and a positive stable matrix $\C$ such that $\D=\C_s$. Let  $g_0\in L^2\pa{\R^d,f_\infty^{-1}}$ be given, and let $g$ be the solution to the Fokker-Planck equation with initial data $g_0$. Assume in addition that $g_0\perp V_1$, with $V_1$ defined in \eqref{eq:defVm}, i.e.
$$\int_{\R^d}g_0(x) \partial_{x_i}f_\infty(x) f_\infty(x)^{-1}dx=0,\qquad \forall i=1,\dots,d.$$
Then for any positive definite matrix $\PP$ such that
\begin{equation}%\label{eq:CPcondition}
\C^T {\bf{P}}+{\bf{P}}\C \geq 2\lambda {\bf P}
\end{equation}
for some $\lambda>0$, we have that for any $t>0$, $I_2\pa{g(t)|f_\infty}<\infty$ and 
\begin{equation}\label{eq:Fisher_improved_decay}
\frac{d}{dt}I^{\bf{P}}_2 \pa{g(t)|f_\infty} \leq -2\pa{\lambda+{d}_{\mathrm{min}}}I^{\bf{P}}_{2}\pa{g(t)|f_\infty},
\end{equation}
where $\mathrm{d}_{\mathrm{min}}>0$ is the minimal eigenvalue of $\D$. Consequently we have that for any $t_0>0$
\begin{equation}\label{eq:Fisher_improved_decay_I}
	I^{\bf{P}}_2 \pa{g(t)|f_\infty} \leq I^{\bf{P}}_{2}\pa{g(t_0)|f_\infty}e^{-2\pa{\lambda+\mathrm{d}_{\mathrm{min}}} (t-t_0)}.
\end{equation}
for $t\geq t_0$.
\end{theorem}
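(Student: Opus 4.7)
The plan is to sharpen the Bakry-Emery identity underlying Theorem~\ref{thm:Fisher_decay} in the case $\psi=\psi_2$: keep the Hessian remainder that was discarded there, and use the hypothesis $g_0\perp V_1$ to bound it below via a Gaussian Poincar\'e estimate.

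Two preliminary facts are immediate. First, by Theorem~\ref{thm:spectraldecomposition} each $V_m$ is invariant under the flow, hence so is $V_1^\perp$ in $L^2\pa{\R^d,f_\infty^{-1}}$, so $g(t)\perp V_1$ for every $t\geq 0$. Second, the positive-definiteness of $\D$ makes \eqref{eq:fokkerplanck} uniformly parabolic, so standard parabolic smoothing gives $g(t)\in C^\infty\pa{\R^d}$ with Gaussian decay for every $t>0$, and in particular $I_2^{\PP}\pa{g(t)|f_\infty}<\infty$ for $t>0$; this justifies all manipulations below.

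The main step is the derivative identity. Setting $u:=g/f_\infty$ and $v:=\nabla u$, the recast \eqref{eq:FPrecast-gen} (with $\K=\II$ in our coordinates) gives $\partial_t u = \D:\nabla^2 u - x^T\C\nabla u$ and therefore $\partial_t v = \D:\nabla^2 v - \C v - \pa{\nabla^2 u}\C^T x$. Differentiating $I_2^{\PP}\pa{g|f_\infty}=\int v^T\PP v\,f_\infty\,dx$ and integrating by parts against $f_\infty$, one checks that the contributions involving $\C_a:=\C-\D$ to the Hessian piece vanish because they reduce to traces of symmetric third-order tensors paired with the antisymmetric matrix $\C_a$, the $\D$-contributions from the drift and diffusion terms cancel up to a clean quadratic form in the $\partial_i v$, and the remaining first-order drift term packages into $v^T\pa{\C^T\PP+\PP\C}v$. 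One thus obtains
\begin{equation*}
\frac{d}{dt} I_2^{\PP}\pa{g|f_\infty} = -\int_{\R^d} v^T\pa{\C^T\PP+\PP\C}v\,f_\infty\,dx \;-\; 2\sum_{i=1}^d d_i \int_{\R^d}\pa{\partial_i v}^T\PP\pa{\partial_i v}\,f_\infty\,dx.
\end{equation*}

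To extract the extra $d_{\mathrm{min}}$, test the hypothesis $g\perp V_1$ against the basis $\{\partial_{x_i}f_\infty\}_{i=1}^d=\{-x_i f_\infty\}_{i=1}^d$ of $V_1$ to get $\int_{\R^d} g\,x_i\,dx=0$; a single integration by parts then yields the mean-zero property $\int_{\R^d} v_i f_\infty\,dx = \int_{\R^d} u\,x_i\,f_\infty\,dx = \int_{\R^d} g\,x_i\,dx = 0$ for every $i$. Setting $w:=\sqrt{\PP}\,v$, each component $w_j$ is a linear combination of the $v_i$ and hence also has zero mean under $f_\infty$; since $f_\infty$ is the standard Gaussian in our coordinates (Poincar\'e constant $1$), $\int w_j^2 f_\infty\,dx \leq \int|\nabla w_j|^2 f_\infty\,dx$ for every $j$. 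Summing over $j$ and recognising $\sum_j|\nabla w_j|^2 = \sum_i\pa{\partial_i v}^T\PP\pa{\partial_i v}$ produces $I_2^{\PP}\pa{g|f_\infty}\leq \sum_i\int_{\R^d}\pa{\partial_i v}^T\PP\pa{\partial_i v}f_\infty\,dx$. Combining with $d_i\geq d_{\mathrm{min}}$ in the Hessian term and \eqref{eq:CPcondition} in the first term, the derivative identity collapses to $\frac{d}{dt} I_2^{\PP}\leq -2\pa{\lambda+d_{\mathrm{min}}}I_2^{\PP}$, and Gronwall integrated from $t_0$ to $t$ delivers \eqref{eq:Fisher_improved_decay_I}. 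The main obstacle is the derivative identity itself: organising the many cross terms produced by $\C_a$ in the drift, and verifying that they either collapse into the clean symmetric form above or cancel by Hessian symmetry (via tracing against an antisymmetric matrix), is the technical heart of the proof; once it is in hand, the Poincar\'e argument and Gronwall are almost automatic.
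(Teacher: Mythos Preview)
Your proof is correct and follows essentially the same approach as the paper's. The paper obtains the derivative identity by specialising the general formula \eqref{eq:fishcerdecayproofV} from the proof of Theorem~\ref{thm:Fisher_decay} to $\psi=\psi_2$ (where the $\X$--matrix collapses to $\operatorname{diag}(1,0)$ and the remainder becomes $\tr(\PP J_x(v)\D J_x(v))=\sum_i d_i(\partial_i v)^T\PP(\partial_i v)$), whereas you rederive that identity directly; the subsequent step---set $w=\sqrt{\PP}\,v$, use $g\perp V_1$ to get zero Gaussian mean of each $w_j$, apply the Gaussian Poincar\'e inequality, then Gronwall---is identical to the paper's argument with its variable $\zeta=\sqrt{\PP}\,v$.
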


\begin{remark}\label{rem:only_place}
	We would like to point out that Theorem \ref{thm:Fisher_improved_decay} is the only place we will truly need the positivity condition on $\D$ -- the need for this in any theorem that follows stems from a use of Theorem \ref{thm:Fisher_improved_decay} in its proof. This dependence is made explicit in the result as the term $\mathrm{d}_{\mathrm{min}}>0$ appears and provides an improved decay. In recent work \cite{AEW23}, the authors have managed to show that $I^{\bf{P}}_2$ satisfies an improved decay property, similar in nature to the above, even when $\D$ is degenerate, as long as conditions \eqref{item:cond_semipositive}-\eqref{item:cond_no_invariant_subspace_to_kernel} hold. This, however, required an explicit estimate of the operator norm of $e^{Lt}$ on the Banach space $V_0^{\perp}$ which was explored in \cite{ASS}. The proof we provide in the case where $\D$ is not degenerate is much more elementary. 
\end{remark}

\subsection{Proof of Main Results}

\begin{proof}[Proof of Theorem \ref{thm:Fisher_decay}] The proof is an adaptation of the the proof of Proposition 4.5 in \cite{AE}. Denoting by $u(x):=\nabla \pa{\frac{f(x)}{f_\infty(x)}}$ and $v:=\nabla \pa{\frac{g(x)}{f_\infty(x)}}$ we find that
\begin{equation}\label{eq:diffoffisher}
	\begin{gathered}
		\frac{d}{dt}\mathcal{I}^{\bf{P}}_{\psi}\pa{f(t),g(t)|f_\infty}=\underbrace{\int_{\R^d} \psi^{\prime\prime\prime}\pa{\frac{f(t,x)}{f_\infty(x)}}\partial_t f(t,x) v(t,x)^T {\bm{P}}v(t,x)dx}_{\mathfrak{A}:=}\\
		+\underbrace{2\int_{\R^d} \psi^{\prime\prime}\pa{\frac{f(t,x)}{f_\infty(x)}}v(t,x)^T {\bf{P}} \partial_t v(t,x)f_\infty(x)dx}_{\mathfrak{B}:=}.
	\end{gathered}
\end{equation}
To close the appropriate inequality \eqref{eq:Fisher_decay_in_functional_form} we will evaluate these two terms, starting with $\mathfrak{B}$.\\
Recalling from \eqref{eq:FPrecast-gen} that the Fokker-Planck in our case can be rewritten as
$$\partial_t f(t,x)=\text{div}\pa{f_\infty(x)\C \nabla \pa{\frac{f(t,x)}{f_\infty(x)}}}$$ 
we see that
$$\partial_t v(t,x)=\nabla \pa{\partial_t \frac{g(t,x)}{f_\infty(x)}}=\nabla\pa{\frac{\text{div}\pa{f_\infty(x) \C  v(t,x)}}{f_\infty(x)}}$$
$$=\nabla\pa{\text{div}\pa{\C v(t,x) }+\frac{\nabla f_\infty(x)^T \C v(t,x)}{f_\infty(x)} }.$$
Since $\text{div}\pa{\B \nabla h}=\text{div}\pa{\B_s \nabla h}$ for any constant matrix $\B$ and $h\in C^2$, we find that 
\begin{equation}\label{eq:partialtV}
\begin{gathered}
\partial_t v(t,x)=\nabla \text{div}\pa{\D v(t,x)}-\nabla \pa{x^T \C  v(t,x)}\\
=\nabla \text{div}\pa{\D  v(t,x)} - \pa{\C J_x \pa{v(t,x)}}^T x - \C v(t,x),
\end{gathered}
\end{equation}
where we have used the fact that $\nabla f_\infty(x)=-x f_\infty(x)$ and the notation 
$$J_x \pa{v(x)} := \pa{\begin{tabular}{c}
$ \pa{\nabla v_1}^T$ \\ $\vdots$ \\ $ \pa{\nabla v_d}^T$
\end{tabular}}=\pa{\begin{tabular}{ccc}
$\partial^2_{x_1}\frac{g(x)}{f_\infty(x)}$ & $\dots$ & $\partial^2_{x_d x_1}\frac{g(x)}{f_\infty(x)}$ \\ $\vdots$ & $\vdots$ & $\vdots$ \\ $\partial^2_{x_1x_d}\frac{g(x)}{f_\infty(x)}$ & $\dots$ & $\partial^2_{x_d^2}\frac{g(x)}{f_\infty(x)}$
\end{tabular}}=\mathrm{Hess}\pa{\frac{g(x)}{f_\infty(x)}}.$$
%($J_x \pa{v(x)}$ is the Jacobi matrix of the vector $v(x)$).\\
Next, we notice that
$$\int_{\R^d} \psi^{\prime\prime}\pa{\frac{f(x)}{f_\infty(x)}}v(x)^T {\bf{P}} \nabla\big(\text{div}\pa{\D v(x)}\big)f_\infty(x)dx$$
$$=\sum_{i=1}^d \int_{\R^d} \psi^{\prime\prime}\pa{\frac{f(x)}{f_\infty(x)}}v_i(x)\sum_{l=1}^d{\bf{P}}_{il} \pa{\sum_{j=1}^d d_j \partial_{x_l}\partial_{x_j}v_j(x)} f_\infty(x)dx$$
$$=-\sum_{i,j,l=1}^d \int_{\R^d} \psi^{\prime\prime\prime}\pa{\frac{f(x)}{f_\infty(x)}}u_j(x) v_i(x){\bf{P}}_{il} d_j \partial_{x_l}v_j(x) f_\infty(x)dx$$
$$-\sum_{i,j,l=1}^d \int_{\R^d} \psi^{\prime\prime}\pa{\frac{f(x)}{f_\infty(x)}}\partial_{x_j}v_i(x){\bf{P}}_{il} d_j \partial_{x_l}v_j(x) f_\infty(x)dx$$
$$+\sum_{i,j,l=1}^d \int_{\R^d} \psi^{\prime\prime}\pa{\frac{f(x)}{f_\infty(x)}} v_i(x){\bf{P}}_{il} d_j \partial_{x_l}v_j(x)x_j f_\infty(x)dx.$$
Since $\D$ is diagonal we have that
$$d_j\partial_{x_l}v_j(x)=\pa{\D J_x\pa{v(x)}}_{jl}$$ 
and as such
\begin{equation}\label{eq:fishcerdecayproofI}
\begin{gathered}
\int_{\R^d} \psi^{\prime\prime}\pa{\frac{f(x)}{f_\infty(x)}}v(x)^T {\bf{P}} \nabla\big(\text{div}\pa{\D v(x)}\big) f_\infty(x)dx\\
\overset{{\bf{P}}_{il}={\bf{P}}_{li}}{=}-\int_{\R^d}\psi^{\prime\prime\prime}\pa{\frac{f(x)}{f_\infty(x)}}u(x)^T  \D J_x\pa{v(x)}{\bf{P}}v(x) f_\infty(x)dx \\
-\int_{\R^d}\psi^{\prime\prime}\pa{\frac{f(x)}{f_\infty(x)}}\tr\big({\bf{P}}J_x\pa{v(x)}\D J_x\pa{v(x)}\big) f_\infty(x)dx \\
+\int_{\R^d}\psi^{\prime\prime}\pa{\frac{f(x)}{f_\infty(x)}} v(x)^T {\bf{P}} J_x\pa{v(x)}^T \D x f_\infty(x)dx .
\end{gathered}
\end{equation}
Using the fact that for any $v\in\R^d$ and any matrix $\B\in\R^{d\times d}$ 
$$v^T \B v = \frac{1}{2}v^T\pa{\B+\B^T}v$$
we conclude from \eqref{eq:partialtV}, \eqref{eq:fishcerdecayproofI}, and the symmetry of $J_x\pa{v(x)}$ that
\begin{equation}\label{eq:fishcerdecayproofII}
\begin{gathered}
\mathfrak{B}=2\int_{\R^d} \psi^{\prime\prime}\pa{\frac{f(t,x)}{f_\infty(x)}}v(t,x)^T {\bf{P}} \partial_t v(t,x)f_\infty(x)dx \\
=-\int_{\R^d} \psi^{\prime\prime}\pa{\frac{f(t,x)}{f_\infty(x)}}v(t,x)^T \pa{{\bf{P}}\C+\C^T{\bf{P}}}v(t,x)f_\infty(x) dx \\
-2\int_{\R^d}\psi^{\prime\prime\prime}\pa{\frac{f(t,x)}{f_\infty(x)}}u(t,x)^T \D J_x\pa{v(t,x)}{\bf{P}}v(t,x) f_\infty(x)dx \\
-2\int_{\R^d}\psi^{\prime\prime}\pa{\frac{f(t,x)}{f_\infty(x)}}\tr\big({\bf{P}}J_x\pa{v(t,x)}\D J_x\pa{v(t,x)}\big) f_\infty(x)dx \\
+2\int_{\R^d}\psi^{\prime\prime}\pa{\frac{f(t,x)}{f_\infty(x)}} v(t,x)^T {\bf{P}} J_x\pa{v(t,x)} \pa{\D-\C^T}x  f_\infty(x)dx. 
\end{gathered}
\end{equation}
Denoting by 
$$\A =\D-\C^T=\C_s-\C^T=\C_{as},$$ 
the anti-symmetric part of $\C$, and using the definition of $J_x\pa{v(x)}$ we find that the last term in the above expression is
%\begin{equation}\nonumber %\label{eq:fishcerdecayproofII.5}
%\begin{gathered}
%\int_{\R^d}\psi^{\prime\prime}\pa{\frac{f(x)}{f_\infty(x)}} v(x)^T {\bf{P}} J_x\pa{v(x)}\A x  f_\infty(x)dx \\
%=\sum_{i,j,l,m=1}^d \int_{\R^d} \psi^{\prime\prime}\pa{\frac{f(x)}{f_\infty(x)}} v_i(x){\bf{P}}_{il} \partial_{x_l}v_m(x)\A_{mj}x_j f_\infty(x)dx.
%\end{gathered}
%\end{equation} 
%Using the fact that $\partial_{x_l}v_m(x)=\partial_{x_l,x_m}\frac{g(x)}{f_\infty(x)}=\partial_{x_m}v_l(x)$ we continue to estimate the above:

$$\int_{\R^d}\psi^{\prime\prime}\pa{\frac{f(x)}{f_\infty(x)}} v(x)^T {\bf{P}} J_x\pa{v(x)}\A x  f_\infty(x)dx$$
$$=\sum_{i,j,l,m=1}^d \int_{\R^d} \psi^{\prime\prime}\pa{\frac{f(x)}{f_\infty(x)}} v_i(x){\bf{P}}_{il} \partial_{x_m}v_l(x)\A_{mj}x_j f_\infty(x)dx$$
$$=-\sum_{i,j,l,m=1}^d \int_{\R^d} \psi^{\prime\prime\prime}\pa{\frac{f(x)}{f_\infty(x)}}u_m(x) v_i(x){\bf{P}}_{il} v_l(x)\A_{mj}x_j f_\infty(x)dx$$
$$-\sum_{i,j,l,m=1}^d \int_{\R^d} \psi^{\prime\prime}\pa{\frac{f(x)}{f_\infty(x)}} \partial_{x_m}v_i(x){\bf{P}}_{il} v_l(x)\A_{mj}x_j f_\infty(x)dx$$
$$-\sum_{i,j,l,m=1}^d \int_{\R^d} \psi^{\prime\prime}\pa{\frac{f(x)}{f_\infty(x)}} v_i(x){\bf{P}}_{il} v_l(x)\A_{mj}\delta_{jm} f_\infty(x)dx$$
$$+\sum_{i,j,l,m=1}^d \int_{\R^d} \psi^{\prime\prime}\pa{\frac{f(x)}{f_\infty(x)}} v_i(x){\bf{P}}_{il} v_l(x)\A_{mj}x_j x_m f_\infty(x)dx$$
$$=-\int_{\R^d} \psi^{\prime\prime\prime}\pa{\frac{f(x)}{f_\infty(x)}}u(x)^T \A x\, v(x)^T {\bf{P}}v(x) f_\infty(x)dx$$
%$$-\int_{\R^d}\psi^{\prime\prime}\pa{\frac{f(x)}{f_\infty(x)}} v(x)^T {\bf{P}}^T J_x\pa{v(x)}^T\A x  f_\infty(x)dx$$
$$-\int_{\R^d}\psi^{\prime\prime}\pa{\frac{f(x)}{f_\infty(x)}} v(x)^T {\bf{P}} J_x\pa{v(x)}\A x  f_\infty(x)dx$$
where we have used the symmetry of $\PP$ and $J_x\pa{v(x)}$, and the fact that for any anti-symmetric matrix $\A$
$$\sum_{i,j}\xi_i \A_{ij}\xi_j=0,\quad\text{and}\quad  \A_{ll}=0,\;\;\forall l=1,\dots, d.$$
Thus
$$2\int_{\R^d}\psi^{\prime\prime}\pa{\frac{f(x)}{f_\infty(x)}} v(x)^T {\bf{P}} J_x\pa{v(x)}^T\pa{\D-\C^T}x  f_\infty(x)dx$$
$$=-\int_{\R^d} \psi^{\prime\prime\prime}\pa{\frac{f(x)}{f_\infty(x)}}u(x)^T \pa{\D-\C^T} x \, v(x)^T {\bf{P}}v(x) f_\infty(x)dx.$$
Substituting the above in \eqref{eq:fishcerdecayproofII} yields
\begin{equation}\label{eq:fishcerdecayproofIII}
\begin{gathered}
\mathfrak{B}=2\int_{\R^d} \psi^{\prime\prime}\pa{\frac{f(t,x)}{f_\infty(x)}}v(t,x)^T {\bf{P}} \partial_t v(t,x)f_\infty(x)dx \\
=-\int_{\R^d} \psi^{\prime\prime}\pa{\frac{f(t,x)}{f_\infty(x)}}v(t,x)^T \pa{{\bf{P}}\C+\C^T{\bf{P}}}v(t,x)f_\infty(x) dx \\
-2\int_{\R^d}\psi^{\prime\prime\prime}\pa{\frac{f(t,x)}{f_\infty(x)}}u(t,x)^T \D J_x\pa{v(t,x)}{\bf{P}}v(t,x) f_\infty(x)dx \\
-2\int_{\R^d}\psi^{\prime\prime}\pa{\frac{f(t,x)}{f_\infty(x)}}\tr\big({\bf{P}}J_x\pa{v(t,x)}\D J_x\pa{v(t,x)}\big) f_\infty(x)dx \\
-\int_{\R^d} \psi^{\prime\prime\prime}\pa{\frac{f(t,x)}{f_\infty(x)}}u(t,x)^T \pa{\D-\C^T}x\, v(t,x)^T {\bf{P}}v(t,x) f_\infty(x)dx. 
\end{gathered}
\end{equation}

We now turn our attention to the first expression in \eqref{eq:diffoffisher}, $\mathfrak{A}$.
$$\mathfrak{A}=\int_{\R^d} \psi^{\prime\prime\prime}\pa{\frac{f(t,x)}{f_\infty(x)}}\partial_t f(t,x) v(t,x)^T {\bm{P}}v(t,x)dx
$$
$$
=\int_{\R^d} \psi^{\prime\prime\prime}\pa{\frac{f(t,x)}{f_\infty(x)}}\text{div}\pa{f_\infty(x) \C u(t,x)} v(t,x)^T {\bm{P}}v(t,x)dx$$
$$=\int_{\R^d} \psi^{\prime\prime\prime}\pa{\frac{f(t,x)}{f_\infty(x)}}\text{div}\pa{\C u(t,x)} v(t,x)^T {\bm{P}}v(t,x)f_\infty(x) dx
$$
$$
 -\int_{\R^d} \psi^{\prime\prime\prime}\pa{\frac{f(t,x)}{f_\infty(x)}}u(t,x)^T \C^T x\, v(t,x)^T {\bm{P}}v(t,x)f_\infty(x)dx$$
$$\underset{ \text{div}\pa{\B \nabla h}=\text{div}\pa{\B_s \nabla h}}{=}\int_{\R^d} \psi^{\prime\prime\prime}\pa{\frac{f(t,x)}{f_\infty(x)}}\text{div}\pa{\D u(t,x)} v(t,x)^T {\bm{P}}v(t,x)f_\infty(x) dx$$
$$
-\int_{\R^d} \psi^{\prime\prime\prime}\pa{\frac{f(t,x)}{f_\infty(x)}}u(t,x)^T \C^T x v(t,x)^T {\bm{P}}v(t,x)f_\infty(x)dx.$$
%In the above we again used the fact that for a $C^2$ function, $h$, $\text{div}\pa{\B \nabla h}=\text{div}\pa{\B_s \nabla h}$. 
As
$$\int_{\R^d} \psi^{\prime\prime\prime}\pa{\frac{f(x)}{f_\infty(x)}}\text{div}\pa{\D u(x)} v(x)^T {\bm{P}}v(x)f_\infty(x) dx$$
$$=-\int_{\R^d} \psi^{\prime\prime\prime\prime}\pa{\frac{f(x)}{f_\infty(x)}}u(x)^T\D u(x) v(x)^T {\bm{P}}v(x)f_\infty(x) dx$$
$$-2\int_{\R^d} \psi^{\prime\prime\prime}\pa{\frac{f(x)}{f_\infty(x)}} u(x)^T \D J_x\pa{v(x)} {\bf{P}} v(x)f_\infty(x) dx$$
$$+\int_{\R^d} \psi^{\prime\prime\prime}\pa{\frac{f(x)}{f_\infty(x)}}u(x)^T \D x\, v(x)^T {\bm{P}}v(x)f_\infty(x) dx,$$
which follows from a simple integration by parts and the fact that due to the symmetry of ${\bf P}$ and $J_x\pa{v(x)}$
$$\sum_{i=1}^d d_i u_i(x) \partial_{x_i}\pa{v^T(x) {\bf{P}}v(x)} = 2\sum_{i,j,l=1}^d d_i u_i(x) \partial_{x_i}v_j(x) {\bf{P}}_{jl}v_l(x)$$
%$$=2\sum_{i,j,l=1}^d d_i u_i(x) \pa{J_x \pa{v(x)}^T}_{ij}{\bf{P}}_{jl}v_l(x) = 2\sum^d_{i=1} \pa{\D u(x)}_i \pa{J_x\pa{v(x)}^T {\bf{P}} v(x)}_i$$
$$=2u(x)^T \D J_x\pa{v(x)} {\bf{P}} v(x),$$
we conclude that
\begin{equation}\label{eq:fishcerdecayproofIV}
\begin{gathered}
\mathfrak{A}=\int_{\R^d} \psi^{\prime\prime\prime}\pa{\frac{f(t,x)}{f_\infty(x)}}\partial_t f(t,x) v(t,x)^T {\bm{P}}v(t,x)dx\\
=-\int_{\R^d} \psi^{\prime\prime\prime\prime}\pa{\frac{f(t,x)}{f_\infty(x)}}u(t,x)^T \D u(t,x) \, v(t,x)^T {\bm{P}}v(t,x)f_\infty(x) dx\\
-2\int_{\R^d} \psi^{\prime\prime\prime}\pa{\frac{f(t,x)}{f_\infty(x)}} u(t,x)^T \D J_x\pa{v(t,x)} {\bf{P}} v(t,x)f_\infty(x) dx\\
+\int_{\R^d} \psi^{\prime\prime\prime}\pa{\frac{f(t,x)}{f_\infty(x)}}u(t,x)^T \pa{\D-\C^T} x \,v(t,x)^T {\bm{P}}v(t,x)f_\infty(x) dx.
\end{gathered}
\end{equation}
Combining \eqref{eq:fishcerdecayproofIII} and \eqref{eq:fishcerdecayproofIV} shows that
\begin{equation}\label{eq:fishcerdecayproofV}
\begin{gathered}
\frac{d}{dt}\mathcal{I}^{\bf{P}}_{\psi}\pa{f,g|f_\infty}=-\int_{\R^d} \psi^{\prime\prime}\pa{\frac{f(t,x)}{f_\infty(x)}}v(t,x)^T \pa{{\bf{P}}\C+\C^T{\bf{P}}}v(t,x)f_\infty(x) dx \\
-\int_{\R^d} \psi^{\prime\prime\prime\prime}\pa{\frac{f(t,x)}{f_\infty(x)}}u(t,x)^T \D u(t,x) \, v(t,x)^T {\bm{P}}v(t,x)f_\infty(x) dx\\
-4\int_{\R^d}\psi^{\prime\prime\prime}\pa{\frac{f(t,x)}{f_\infty(x)}}u(t,x)^T \D J_x\pa{v(t,x)}{\bf{P}}v(t,x) f_\infty(x)dx \\
-2\int_{\R^d}\psi^{\prime\prime}\pa{\frac{f(t,x)}{f_\infty(x)}}\tr\big({\bf{P}}J_x\pa{v(t,x)}\D J_x\pa{v(t,x)}\big) f_\infty(x)dx.
\end{gathered}
\end{equation}
Denoting by
$${\bf{X}}:=\pa{\begin{tabular}{cc}
$\psi^{\prime\prime}\pa{\frac{f(x)}{f_\infty(x)}}$ & $\psi^{\prime\prime\prime}\pa{\frac{f(x)}{f_\infty(x)}}$ \\
$\psi^{\prime\prime\prime}\pa{\frac{f(x)}{f_\infty(x)}}$ & $\frac{1}{2}\psi^{\prime\prime\prime\prime}\pa{\frac{f(x)}{f_\infty(x)}}$
\end{tabular}}, $$
$${\bf{Y}}:=\pa{\begin{tabular}{cc}
$\tr\big({\bf{P}}J_x\pa{v(x)}\D J_x\pa{v(x)}\big)$ & $u(x)^T \D J_x\pa{v(x)}{\bf{P}}v(x) $ \\
$u(x)^T \D J_x\pa{v(x)}{\bf{P}}v(x) $ & $u(x)^T \D u(x)\,  v(x)^T {\bm{P}}v(x)$
\end{tabular}},$$
and using \eqref{eq:CPcondition} we find that 
\begin{equation}\label{eq:fishcerdecayproofVI}
\begin{gathered}
\frac{d}{dt}\mathcal{I}^{\bf{P}}_{\psi}\pa{f(t),g(t)|f_\infty}\leq -2\lambda\int_{\R^d} \psi^{\prime\prime}\pa{\frac{f(t,x)}{f_\infty(x)}}v(t,x)^T {\bf P} v(t,x)f_\infty(x) dx\\
-2\int_{\R^d}\tr\pa{{\bf{X}}(t){\bf{Y}}(t)}f_\infty(x)dx. 
\end{gathered}
\end{equation}
If we will show that ${\bf{X}}$ and $\bf{Y}$ are positive semi-definite matrices, which will imply that $\tr\pa{{\bf{XY}}}\geq 0$, we would be able to infer our differential inequality \eqref{eq:Fisher_decay_in_functional_form} and conclude the proof. \\
Indeed, since $\psi$ is a generating function for an admissible relative entropy 
$${\bf{X}}_{11}=\psi^{\prime\prime} \geq 0,, \quad {\bf X}_{22}=\frac12 \psi''''\geq 0, \quad\text{and} \det{\bf{X}}=\frac{1}{2}\psi^{\prime\prime}\psi^{\prime\prime\prime\prime}-\pa{\psi^{\prime\prime\prime}}^2 \geq 0$$
	showing that $\bf{X}$ is positive semi-definite. Turning our attention to $\bf{Y},$
% \quad {\bf X}_{22}=\frac12 \psi''''\geq 0, 
we notice that since $\D$ and ${\bf{P}}$ are positive semi-definite ${\bf Y}_{22}\geq 0$. The cyclicity of the trace together with the symmetry of $J_x\pa{v(x)}$ and the fact that $\D$ is positive semi-definite
 imply that
$${\bf{Y}}_{11}=\tr\pa{{\bf{\sqrt{P}}}^TJ_x\pa{v(x)}^T \D J_x\pa{v(x)}{\bf \sqrt{P}}} \geq 0.$$
To show the non-negativity of $\det{\bf{Y}}$ we use the fact that $u^T v =\tr\pa{v u^T}$ and notice that
$$\pa{u(x)^T \D J_x\pa{v(x)}{\bf{P}}v(x)}^2 =\pa{\tr\pa{\D J_x\pa{v(x)}\PP v(x) u(x)^T }}^2	$$
$$=\pa{\tr\pa{v(x) u(x)^T\D J_x\pa{v(x)}{\bf{P}}}}^2=\pa{\tr\pa{{\bf\sqrt{P}}v(x)u(x)^T \sqrt{\D} \cdot \sqrt{\D} J_x(v(x)) {\bf\sqrt{P}}}}^2$$
$$\underset{\tiny{\begin{tabular}{c}
		$\tr\pa{{\bf{A}}^T\B}$ is an \\ inner product	
		\end{tabular}}}{\leq} \tr\pa{\pa{{\bf \sqrt{P}} v(x)u(x)^T \sqrt{\D}} ^T\pa{{\bf \sqrt{P}} v(x)u(x)^T \sqrt{\D}}}$$
$$\hspace{4cm}\times \tr\pa{\pa{\sqrt{\D}J_x(v(x)) {\bf \sqrt{P}} }^T \pa{\sqrt{\D}J_x(v(x)) {\bf \sqrt{P}} }}$$
$$=\tr\big(u(x)v(x)^T {\bf P} v(x)u(x)^T \D\big)\, \tr\big(\D J_x(v(x)) {\bf P} J_x(v(x))\big)$$
$$
=v(x)^T {\bf P} v(x) \, u(x)^T\D u(x)\cdot \tr\big(\D J_x(v(x)) {\bf P} J_x(v(x))\big).
$$
%where we have used the fact that for any $d\times d$ matrices $\A,\B$ we have that  
%$$\tr\pa{\A\B}=\tr\pa{\B\A}$$
%and the fact that $\inner{{\bf{A}},\B}=\tr\pa{{\bf{A}}^T\B}$ is an inner product on real valued $d\times d$ matrices. This proves that $\det\Y \geq 0$.\\
As
$$\det{\bf{Y}}=v(x)^T {\bf P} v(x)\, u(x)^T\D u(x)\cdot \tr\big(\D J_x(v(x)) {\bf P} J_x(v(x))\big)-\pa{u(x)^T \D J_x\pa{v(x)}{\bf{P}}v(x)}^2,$$
we conclude the desired positive semi-definiteness of ${\bf{Y}}$. The proof is thus complete.
\end{proof}
We  now turn our attention to the proof of Theorem \ref{thm:Fisher_improved_decay}.
\begin{proof}[Proof of Theorem \ref{thm:Fisher_improved_decay}]
The instantaneous generation of a finite $2-$Fisher information for the Fokker-Planck equation is a well known result and a proof can be found in Theorem 4.8 in \cite{AE}. It is important to point out that while Arnold and Erb prove the result for a positive unit mass function $f$, the fact that we are dealing with $I_2$ and $e_2$ implies that the sign of the function is not important and a simple rescaling of a non-zero function, in the case where its mass is not $1$, will show that the result remains valid. We continue now to show \eqref{eq:Fisher_improved_decay} and \eqref{eq:Fisher_improved_decay_I}.\\
The key idea of the proof is to explicitly compute the remainder term in \eqref{eq:fishcerdecayproofVI} and to use it to gain the improved inequality \eqref{eq:Fisher_improved_decay}.
As was shown in the proof of Theorem \ref{thm:Fisher_decay}, we have that for any generating function $\psi$ and any $t\geq t_0>0$
\begin{equation}\nonumber%\label{eq:fishcerdecayproofVI}
\begin{gathered}
\frac{d}{dt}\mathcal{I}^{\bf{P}}_{\psi}\pa{f(t),g(t)|f_\infty}\leq -2\lambda \mathcal{I}^{\bf{P}}_{\psi}\pa{f(t),g(t)|f_\infty}
-2\int_{\R^d}\tr\pa{{\bf{X}}(t){\bf{Y}}(t)}f_\infty(x)dx,
\end{gathered}
\end{equation}
where 
$${\bf{X}}:=\pa{\begin{tabular}{cc}
$\psi^{\prime\prime}\pa{\frac{f(x)}{f_\infty(x)}}$ & $\psi^{\prime\prime\prime}\pa{\frac{f(x)}{f_\infty(x)}}$ \\
$\psi^{\prime\prime\prime}\pa{\frac{f(x)}{f_\infty(x)}}$ & $\frac{1}{2}\psi^{\prime\prime\prime\prime}\pa{\frac{f(x)}{f_\infty(x)}}$
\end{tabular}},$$
$${\bf{Y}}:=\pa{\begin{tabular}{cc}
$\tr\pa{{\bf{P}}J_x\pa{\nabla\pa{\frac{ g(x)}{f_\infty(x)}}}\D J_x\pa{\nabla\pa{\frac{ g(x)}{f_\infty(x)}}}}$ & $\nabla\pa{\frac{ f(x)}{f_\infty(x)}}^T \D J_x\pa{\nabla\pa{\frac{ g(x)}{f_\infty(x)}}}{\bf{P}}\nabla\pa{\frac{ g(x)}{f_\infty(x)}} $ \\
$\nabla\pa{\frac{ f(x)}{f_\infty(x)}}^T \D J_x\pa{\nabla\pa{\frac{ g(x)}{f_\infty(x)}}}{\bf{P}}\nabla\pa{\frac{ g(x)}{f_\infty(x)}} $ & $\nabla\pa{\frac{ f(x)}{f_\infty(x)}}^T \D \nabla\pa{\frac{ f(x)}{f_\infty(x)}} \nabla\pa{\frac{ g(x)}{f_\infty(x)}}^T {\bm{P}}\nabla\pa{\frac{ g(x)}{f_\infty(x)}}$
\end{tabular}}.$$
Since
$$\mathcal{I}^{\PP}_{2}(f(t),g(t)|f_\infty)=I^{\PP}_{2}\pa{g(t)|f_\infty}$$ 
we conclude that
\begin{equation}\label{eq:imporved_decay_psi_2_I}
	\begin{gathered}
		\frac{d}{dt}I^{\PP}_{2}\pa{g(t)|f_\infty}\leq -2\lambda I^{\PP}_{2}\pa{g(t)|f_\infty}
		-2\int_{\R^d}\tr\pa{{\bf{X}}(t){\bf{Y}}(t)}f_\infty(x)dx.
	\end{gathered}
\end{equation}
In order to attain \eqref{eq:Fisher_improved_decay} we will focus our attention on the term $\tr\pa{{\bf{X}}{\bf{Y}}}$, which is much simpler when $\psi=\psi_2$.
Indeed, in this case we have that
$${\bf{X}}=\pa{\begin{tabular}{cc}
$1$ & $0$ \\
$0$ & $0$
\end{tabular}}$$
and as such
$$\tr\pa{{\bf{X}}{\bf{Y}}}=\tr\pa{{\bf{P}}J_x\pa{\nabla\pa{\frac{ g(x)}{f_\infty(x)}}}\D J_x\pa{\nabla\pa{\frac{ g(x)}{f_\infty(x)}}}}.$$
Using the fact that $J_x\pa{\nabla\pa{\frac{ g(x)}{f_\infty(x)}}}$ and $\PP$ are symmetric together with the cyclicity of the trace we can rewrite the above as
$$\tr\pa{\X\Y}=\tr\pa{{\sqrt{\bf{P}}}J_x\pa{v(x)}\D J_x\pa{v(x)}^T \sqrt{\bf{P}}^T},$$
where, like the proof of Theorem \ref{thm:Fisher_decay}, we have used the notation $v(x)=\nabla\pa{\frac{g(x)}{f_\infty(x)}}$.\\
Since for any diagonal matrix $\D=\operatorname{diag}\pa{d_1,\dots,\amit{d_d}}$, and matrices $\A$ and $\B$ we have that
\begin{equation}\label{eq:simple_tr_property}
\tr\pa{\A\D\B^T}=\sum_{i,j}d_j A_{ij}B_{ij},
\end{equation}
we see that
$$\tr\pa{\X\Y}=\sum_{i,j=1}^d d_j \pa{\sqrt{\PP}J_x \pa{v(x)}}^2_{ij} \geq \mathrm{d}_{\mathrm{min}}\sum_{i,j=1}^d \pa{\sqrt{\PP}J_x \pa{v(x)}}^2_{ij}.$$
%$$={\bf{d}}_{min}\tr\pa{{\bf{P}}J_x\pa{v(x)} J_x\pa{v(x)}^T}=.$$
From the definition of $J_x\pa{v(x)}$ we have that
$$\pa{\sqrt{\PP}J_x \pa{v(x)}}_{ij}=\sum_{k=1}^d \sqrt{\PP}_{ik}\partial_{x_j}v_k(x)=\partial_{x_j} \pa{\sqrt{\PP}v(x)}_i,$$ %\sum_{k=1}^d \sqrt{\PP}_{ik}\pa{J_x\pa{v(x)}}_{jk}$$
and denoting by $\zeta(x):=\sqrt{\PP} v(x)$ we find that 
$$\tr\pa{\X\Y} \geq \mathrm{d}_{\mathrm{min}} \sum_{i=1}^d \sum_{j=1}^d \abs{\partial_{x_j} \zeta_i(x)}^2= \mathrm{d}_{\mathrm{min}} \sum_{i=1}^d \abs{\nabla\zeta_i(x)}^2.$$
The weighted Poincar\'e inequality
\begin{equation}\nonumber
\int_{\R^d} f(x)^2f_\infty(x)dx- \pa{\int_{\R^d}f(x)f_\infty(x)dx}^2 \leq \int_{\R^d}\abs{\nabla f(x)}^2 f_\infty(x)dx,
\end{equation}
which holds for $f_\infty$ (see, for instance \cite{Be89}) implies that
\begin{equation}\label{eq:improved_rate_proof_I}
\begin{gathered}
\int_{\R^d}\tr\pa{\XX\YY}f_\infty(x)dx \geq \mathrm{d}_{\mathrm{min}}\sum_{i=1}^d \int_{\R^d} \abs{\nabla \zeta_i(x)}^2 f_\infty(x)dx\\
\geq \mathrm{d}_{\mathrm{min}}\sum_{i=1}^d \pa{\int_{\R^d} \zeta_i(x)^2f_\infty(x)dx - \pa{\int_{\R^d}\zeta_i(x)f_\infty(x)dx}^2}.
\end{gathered}
\end{equation}
This is the only point in the proof where we will use the information that $g_0\perp V_1$. Since $\br{V_m}_{m\in\N}$ are invariant under the Fokker-Planck operator and its adjoint we conclude that $g(t,x)\perp V_1$ for all $t\geq 0$. Thus,  for any $i=1,\dots, d$
$$\int_{\R^d}v_i(t,x)f_\infty(x)dx = - \int_{\R^d}\frac{g(t,x)}{f_\infty(x)}\partial_{x_i}f_\infty(x)dx=-\inner{g(t,x),\partial_{x_i}f_\infty(x)}_{L^2\pa{\R^d,f_\infty^{-1}}}=0.$$
Since $\zeta_i(t,x)=\sum_{k=1}^d \sqrt{\PP}_{ik}v_k(t,x)$ we conclude that 
\begin{equation}\label{eq:improved_rate_proof_II}
\int_{\R^d}\zeta_i(t,x)f_\infty(x)dx=0,
\end{equation}
for all $i=1,\dots,d$ and $t\geq 0$. Going back to \eqref{eq:improved_rate_proof_I}, and using \eqref{eq:improved_rate_proof_II} together with the fact that
$$\sum_{i=1}^d \int_{\R^d}\zeta_i(x)^2 f_\infty(x)
=\int_{\R^d}(\sqrt{\PP}v(x))^T(\sqrt{\PP}v(x))f_\infty(x)dx$$
$$=\int_{\R^d}v(x)^T \PP v(x)f_\infty(x)dx=I_2^{\PP}(g|f_\infty)$$
we find that
$$\int_{\R^d}\tr\pa{\X(t)\Y(t)}f_\infty(x)dx \geq \mathrm{d}_{\mathrm{min}}I_2^{\PP}\pa{g(t)|f_\infty}.$$
Combining the above with \eqref{eq:imporved_decay_psi_2_I} gives
\begin{equation}\nonumber
	\begin{gathered}
		\frac{d}{dt}I^{\bf{P}}_{2}\pa{g(t)|f_\infty}\leq -2\lambda I_2^{\bf{P}}\pa{g(t)|f_\infty}-2\mathrm{d}_{\mathrm{min}}I_2^{\PP}\pa{g(t)|f_\infty},
	\end{gathered}
\end{equation}
which is the desired result.
\end{proof}

\begin{remark}\label{rem:failed_inequality}
	The requirement that $\D$ is non-degenerate is essential in the proof we've provided above. Indeed, a key inequality we've used was that when $\D$ is positive definite we can find $\beta>0$ such that
	$$\tr\pa{\A\D\A^T} \geq \beta \tr\pa{\A\A^T} $$
	for any matrix $\A$. This doesn't hold when $\D$ is degenerate.\\
	In particular, with the choice of 
	\begin{equation}\nonumber
		\D= \begin{pmatrix}
			1& 0\\
			0 & 0
		\end{pmatrix}, \qquad \C= \begin{pmatrix}
			1& 1\\
			-1& 0
		\end{pmatrix},
	\end{equation}
	we can find an explicit function $g_0\in V_2$ such that the solution to the Fokker-Planck equation with initial datum $g_0$, $g(t)$, satisfies 
	$$
	\tr\pa{\X(t_n)\Y(t_n)}=0
	$$
	for a sequence $\br{t_n}_{n\in\N}$ that converges to infinity. This would mean that we won't be able to proceed as we did in our proof of Theorem \ref{thm:Fisher_improved_decay}, which implies that this approach can't, in general, be extended.
\end{remark}

\section{The Interplay Between Fisher Informations and Regularisation Properties of the Fokker-Planck Equation}\label{sec:hyper_hypo}
In the previous section we established that when the matrix $\D$ is positive definite one can get an improved convergence rate for the $2-$Fisher information of a function as long as it is spectrally far enough from the equilibrium state of the equation. Two immediate questions arise if we want to use this result:
\begin{itemize}
	\item Can we infer information on $\mathcal{I}_p^{\PP}\pa{f,g|f_\infty}$ from $I_2^{\PP}\pa{g|f_\infty}$ when $1<p<2$ when $g$ is a projection of $f$ in some sense?
	\item What guarantee do we have that $I_2^{\PP}\pa{f(t)|f_\infty}$ and $e_2\pa{f(t)|f_\infty}$ are finite if the initial datum is not in $L^2\pa{\R^d,f_{\infty}^{-1}}$?
\end{itemize}
In this section we will address the above concerns. We shall show that:
\begin{itemize}
\item For any $1<p<2$ one can bound $\I^{\PP}_p$ by interpolating $\I^{\PP}_1$ and $\I^{\PP}_2=I^{\PP}_2$, where
$$\I^{\PP}_1\pa{f,g|f_\infty}=\int_{\R^d}\frac{\nabla\pa{\frac{g(x)}{f_\infty(x)}}^T {\bm{P}}\nabla\pa{\frac{g(x)}{f_\infty(x)}}}{\frac{f(x)}{f_\infty(x)}} f_\infty(x)dx$$
is the generalised Fisher information that is associated to the Boltzmann entropy. 
	\item For any initial datum $f_0$ with finite $p-$entropy for some $1<p\leq 2$ there exists an explicit time after which the solution to our equation, $f(t)$, has a finite $2-$entropy (hypercontractivity)  and $2-$Fisher information (upper-contractivity). Moreover, if in addition $g(t)$ is another solution to the equation with initial datum with finite $\widetilde{p}-$entropy for some $1<\widetilde{p}\leq 2$ then we will get a similar result for $\I^{\PP}_1\pa{f(t),g(t)|f_\infty}$ (lower-contractivity). In fact we will show the creation of a finite $\I_p^{\PP}$ for any $1\leq p\leq 2$. 
	%(though this can be attained from the previous point and the finiteness of $\I^{\PP}_1$ and $I_2^{\PP}$). 
\end{itemize}

\subsection{Interpolation of Generalised Fisher Information}\label{subsec:interpolation}
As was discussed in Remark \ref{rem:lesser_conditions}, the family of entropies, $\br{e_p}_{1\leq p\leq 2}$ is (almost) monotone. Consequently, we know that a finite $p_2-$entropy immediately implies a finite $p_1-$ entropy for $p_1<p_2$. This structure is \textit{lacking} when one considers the standard and generalised Fisher informations. The reason behind this lies \amit{in} the fact that the generating functions of the $p-$Fisher informations 
$$\psi^{\prime\prime}_p(y)=y^{p-2},\qquad 1\le p\le 2$$
have \textit{significantly} different behaviour as $y\searrow0$ and $y\nearrow \infty$ which can't be compared to any function from the generating family. Indeed, for any $1<p<2$ we see that $\psi^{\prime\prime}_p$ is dominated by $\psi^{\prime\prime}_1$ near zero and by $\psi^{\prime\prime}_2$ at infinity. This observation leads us to the main interpolation result of this subsection:
\begin{theorem}\label{thm:interpolation_p}
Let $f$ and $g$ be functions such that $\I^{\PP}_2\pa{f,g|f_\infty}=I^{\PP}_2\pa{g|f_\infty}$ and $\I^{\PP}_1\pa{f,g|f_\infty}$ are finite\footnote{Note that the fact that we're using the generalised $1-$Fisher information implies that $f$ must be positive almost everywhere.} for some positive definite $\PP\in\R^{d\times d}$. Then, for any $1<p<2$ we have that 
\begin{equation}\label{eq:interpolation_p}
\I^{\PP}_p\pa{f,g|f_\infty} \leq \frac{1}{\pa{p-1}^{p-1}\pa{2-p}^{2-p}}\mathcal{I}^{\bf{P}}_{1}\pa{f,g|f_\infty}^{2-p}I^{\PP}_2\pa{g|f_\infty}^{p-1}. 
\end{equation}
\end{theorem}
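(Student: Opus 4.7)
The plan is to reduce the inequality to a single application of Hölder's inequality against the measure $f_\infty(x)\,dx$. First I would observe that $\psi_p^{\prime\prime}(y)=y^{p-2}$ for all $p\in[1,2]$ (with the convention $\psi_1^{\prime\prime}(y)=y^{-1}$), and introduce the abbreviations $h(x):=f(x)/f_\infty(x)$ and $Q(x):=\nabla\pa{g/f_\infty}(x)^T\,\PP\,\nabla\pa{g/f_\infty}(x)\geq 0$; the non-negativity of $Q$ is a consequence of $\PP$ being positive definite. With this notation the three functionals appearing in the statement share a common scalar structure,
\begin{align*}
\mathcal{I}^{\PP}_p(f,g|f_\infty)&=\int_{\R^d} h^{p-2}\,Q\,f_\infty\,dx,\\
\mathcal{I}^{\PP}_1(f,g|f_\infty)&=\int_{\R^d} h^{-1}\,Q\,f_\infty\,dx,\\
I^{\PP}_2(g|f_\infty)&=\int_{\R^d} Q\,f_\infty\,dx,
\end{align*}
so the claim becomes an inequality about integrals of powers of $h$ weighted by $Q\,f_\infty$.

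The key pointwise identity I intend to exploit is
$$h^{p-2}\,Q \;=\; \pa{h^{-1}\,Q}^{2-p}\cdot Q^{p-1},$$
which is a genuine equality because $-(2-p)=p-2$ for the exponent of $h$ and $(2-p)+(p-1)=1$ for the exponent of $Q$. Since $r:=1/(2-p)$ and $s:=1/(p-1)$ are Hölder conjugate, I would then apply Hölder's inequality against the measure $f_\infty(x)\,dx$ to conclude
$$\mathcal{I}^{\PP}_p(f,g|f_\infty)\leq \pa{\mathcal{I}^{\PP}_1(f,g|f_\infty)}^{2-p}\pa{I^{\PP}_2(g|f_\infty)}^{p-1}.$$
Finally, since the function $\alpha\mapsto \alpha^{\alpha}(1-\alpha)^{1-\alpha}$ attains its minimum $1/2$ on $[0,1]$ at $\alpha=1/2$ and equals $1$ at the endpoints, one has $(p-1)^{p-1}(2-p)^{2-p}\leq 1$ for $p\in(1,2)$, hence $1/\pa{(p-1)^{p-1}(2-p)^{2-p}}\geq 1$, and the stated inequality follows a fortiori.

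The hard part will really just be bookkeeping: I need $h$ to be strictly positive almost everywhere so that $h^{p-2}$ makes sense pointwise, which is precisely the reason why the authors remark after Definition \ref{def:generalisedfisher} that $f$ must be positive for $\mathcal{I}^{\PP}_1$ to be defined, and for solutions of \eqref{eq:fokkerplanck} with non-zero $L^1_+$ initial datum it is guaranteed by Theorem \ref{thm:propertiesoffokkerplanck}. The finiteness hypotheses on $\mathcal{I}^{\PP}_1$ and $I^{\PP}_2$ then ensure that the right-hand side of the Hölder bound is finite, which in turn implies finiteness of $\mathcal{I}^{\PP}_p$. Overall the argument is essentially a single pointwise algebraic factorization followed by a direct Hölder estimate, and the slightly weaker constant in the statement costs nothing compared to the sharper constant $1$ that the Hölder approach actually produces.
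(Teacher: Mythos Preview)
Your proof is correct and in fact yields a sharper constant than the one stated: your H\"older argument produces the inequality with constant $1$, and you then observe that the paper's constant $\frac{1}{(p-1)^{p-1}(2-p)^{2-p}}\geq 1$ only weakens it.

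The paper takes a different route. Rather than factorising the integrand and applying H\"older, it uses the pointwise splitting
\[
y^{p-2}\leq R^{p-1}y^{-1}\mathbf{1}_{\{y\leq R\}}+R^{p-2}\mathbf{1}_{\{y>R\}},
\]
which gives the additive bound $\I_p^{\PP}\leq R^{p-1}\I_1^{\PP}+R^{p-2}I_2^{\PP}$, and then optimises over $R$. This is where the constant $(p-1)^{-(p-1)}(2-p)^{-(2-p)}$ enters. Your approach is more direct and strictly sharper for the $p$-family. The advantage of the paper's cutoff-and-optimise method is that it extends verbatim to the general generating functions of Theorem~\ref{thm:interpolation_general}, where $\psi''$ need not be a pure power and no multiplicative factorisation like $h^{p-2}Q=(h^{-1}Q)^{2-p}Q^{p-1}$ is available; there the pointwise bound $\psi''(y)\leq R\psi''(R)/y+\psi''(R)$ from Lemma~\ref{lem:propertiesof_g} is what drives the argument.
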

We remark that the interpolation constant in \eqref{eq:interpolation_p} equals 1 in both limits $p\searrow 1$ and $p\nearrow 2$. This theorem can be extended to the following:
\begin{theorem}\label{thm:interpolation_general}
Let $\psi$ be a generating function for an admissible relative entropy such that $\psi^{\prime\prime}$ has a singularity at zero.  Then, if there exist $c_1,c_2\geq 0$ and $0<\alpha<1$ such that
\begin{equation}\label{eq:growthofpsiprimeprime}
\psi^{\prime\prime}(y) \leq c_1 y^{-\alpha}+c_2, \quad \forall y>0,
\end{equation}
we have that for any positive definite ${\bf P}\in\R^{d\times d}$ 
\begin{equation}\label{eq:interpolationgeneralised}
\mathcal{I}^{\bf{P}}_{\psi}\pa{f,g|f_\infty} \leq 2c_1\mathcal{I}^{\bf{P}}_{1}(f,g|f_\infty)^{\alpha}I^{\bf{P}}_2(g|f_\infty)^{1-\alpha}+2c_2 I^{\bf{P}}_2(g|f_\infty),
\end{equation}
for any functions $f$ and $g$ for which the right hand side is well defined. 
\end{theorem}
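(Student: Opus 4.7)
My plan is to plug the pointwise bound \eqref{eq:growthofpsiprimeprime} directly into the integrand defining $\I^{\PP}_\psi$ and then peel the singular factor apart via a single H\"older estimate. Setting
\[ Q(x) := \nabla\pa{\frac{g(x)}{f_\infty(x)}}^T \PP \nabla\pa{\frac{g(x)}{f_\infty(x)}} \geq 0, \]
positivity of $f$ together with \eqref{eq:growthofpsiprimeprime} immediately gives
\[ \I^{\PP}_\psi(f,g|f_\infty) \;\leq\; c_1 \int_{\R^d} \pa{\frac{f}{f_\infty}}^{-\alpha} Q\, f_\infty\, dx \;+\; c_2 \int_{\R^d} Q\, f_\infty\, dx, \]
and the second integral is precisely $I^{\PP}_2(g|f_\infty)$.

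The key step is to control the first integral by an interpolation between $\I^{\PP}_1$ and $I^{\PP}_2$. I would factorise the integrand as
\[ \pa{\frac{f}{f_\infty}}^{-\alpha} Q\, f_\infty \;=\; \bigl[(f/f_\infty)^{-1}\, Q\, f_\infty\bigr]^{\alpha} \cdot \bigl[Q\, f_\infty\bigr]^{1-\alpha}, \]
and apply H\"older's inequality with conjugate exponents $1/\alpha$ and $1/(1-\alpha)$, which produces
\[ \int_{\R^d} \pa{\frac{f}{f_\infty}}^{-\alpha} Q\, f_\infty\, dx \;\leq\; \pa{\int_{\R^d} \frac{Q}{f/f_\infty}\, f_\infty\, dx}^{\alpha} \pa{\int_{\R^d} Q\, f_\infty\, dx}^{1-\alpha} = \I^{\PP}_1(f,g|f_\infty)^{\alpha}\, I^{\PP}_2(g|f_\infty)^{1-\alpha}, \]
upon recognising that $\psi_1''(y)=1/y$ so that the first factor is exactly $\I^{\PP}_1(f,g|f_\infty)$. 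Combining the two displays yields \eqref{eq:interpolationgeneralised}, in fact with the sharper constants $c_1, c_2$ in place of $2c_1, 2c_2$, so the factor $2$ in the statement provides comfortable slack that one could actually drop.

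There is no serious obstacle in this argument: positivity of $f$ (so that $\psi''(f/f_\infty)$ and $(f/f_\infty)^{-1}$ are defined almost everywhere), non-negativity of the quadratic form $Q$, and the finiteness of $\I^{\PP}_1$ and $I^{\PP}_2$ (the ``right-hand side is well defined'' caveat in the statement) reduce everything to a single application of H\"older. As a sanity check, specialising to $\psi=\psi_p$ with $1<p<2$ gives $\psi_p''(y)=y^{p-2}$, i.e.\ $c_1=1$, $c_2=0$, $\alpha=2-p$, and the bound above recovers the form of Theorem \ref{thm:interpolation_p} -- in fact with constant $1$, consistent with the remark that the optimal constant approaches $1$ as $p\searrow 1$ or $p\nearrow 2$ and is certainly dominated by the looser $[(p-1)^{p-1}(2-p)^{2-p}]^{-1}\leq 2$ recorded there.
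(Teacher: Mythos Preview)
Your proof is correct and in fact tighter than the paper's: it delivers \eqref{eq:interpolationgeneralised} with constants $c_1,c_2$ rather than $2c_1,2c_2$. The route, however, is genuinely different. The paper does not insert the growth assumption \eqref{eq:growthofpsiprimeprime} at the outset. Instead it first exploits the admissible-entropy structure --- specifically the concavity of $1/\psi''$ --- through Lemma~\ref{lem:propertiesof_g}, which yields the parametric pointwise bound $\psi''(y)\le R\psi''(R)/y+\psi''(R)$ for every $R>0$; integrating this gives $\I^{\PP}_\psi\le R\psi''(R)\,\I^{\PP}_1+\psi''(R)\,I^{\PP}_2$, and only after choosing $R=I^{\PP}_2/\I^{\PP}_1$ is the growth hypothesis invoked to estimate $\psi''(R)$. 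This two-step procedure is where the extra factor~$2$ originates.

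What each approach buys: your argument is shorter, uses nothing about $\psi$ beyond the growth bound (neither the singularity at~$0$ nor the concavity of $1/\psi''$ is actually needed), and gives the sharper constant. The paper's approach, on the other hand, yields the intermediate inequality $\I^{\PP}_\psi\le 2\psi''(R)\,I^{\PP}_2$ with $R=I^{\PP}_2/\I^{\PP}_1$, valid for \emph{any} admissible $\psi$ before any specific growth rate is imposed; this is a structural statement that your H\"older route does not isolate.
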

Since we will only use Theorem \ref{thm:interpolation_p} in our main body of work we leave the proof of the general inequality to Appendix \ref{app:b} and focus on proving Theorem \ref{thm:interpolation_p}.
\begin{proof}[Proof of Theorem \ref{thm:interpolation_p}]
Given any $1<p<2$, we have that for any $y>0$ and arbitrary $R>0$
$$\psi_p^{\prime\prime}(y)=y^{p-2} \leq  \frac{R^{p-1}}{y}\1_{\br{0<\xi\leq R}}(y) +R^{p-2} \1_{\br{\xi>R}}(y).$$
Thus
$$\mathcal{I}^{\bf{P}}_{\psi}\pa{f,g|f_\infty} \leq R^{p-1}\mathcal{I}^{\bf{P}}_{1}\pa{f,g|f_\infty} +R^{p-2}I^{\bf{P}}_2(g|f_\infty).$$
Optimising over $R$ yields
$$R=\frac{2-p}{p-1}\frac{I^{\bf{P}}_2(g|f_\infty)}{\mathcal{I}^{\bf{P}}_{1}\pa{f,g|f_\infty}}$$
and as such
$$\mathcal{I}^{\bf{P}}_{\psi}\pa{f,g|f_\infty} \leq \frac{1}{p-1}\pa{\frac{p-1}{2-p}}^{2-p}\mathcal{I}^{\bf{P}}_{1}\pa{f,g|f_\infty}^{2-p}I^{\bf{P}}_2(g|f_\infty)^{p-1}. $$
\end{proof}

%With these results at hand, we are now ready to address the hyper, and hypo,-contractivity.

\subsection{Hyper-, Upper- and Lower-Contractivity}\label{subsec:contractivity}
The notion of hypercontractivity goes back to \cite{Nelson} (see also \cite{BGL}) and in its heart, it refers to ``regularisation'' by the evolution semigroup (improved integrability). In the context of our setting, an appropriate regularisation result will be that for any initial datum with finite $p_1$-entropy, where $p_1\in (1,2)$, we will be able to find a time after which the $p_2-$entropy and generalised $p_2-$Fisher information are finite for $p_2\in \rpa{1,2}$. Due to the monotonicity of the entropies we see that a hypercontractivity result for the entropies will follow immediately if we can show such result for $p_2=2$. This, in fact, was already established in Theorem 4.3 of \cite{AEW18}. The same proof will actually show that not only $e_2$ is finite, but $I_2^{\PP}$ will also become finite after enough time has passed.
As was mentioned in the previous subsection, however, the family of generating functions for the $p-$Fisher informations (generalised or not),$\br{\psi^{\prime\prime}_p}$, \amit{is} not monotone and as such the creation of a finite $I_2^{\PP}$ does \textit{not} guarantee a finite $\I^{\PP}_p$ for $1\leq p <2$. The goal of this subsection is to show directly the generation of a finite $\I_p^{\PP}\pa{f(t),g(t)|f_\infty}$ \amit{, where $p\in [1,2]$,} for solutions to the Fokker-Planck equations, $f(t)$ and $g(t)$, with initial data that have finite entropies. We would like to mention that according to our interpolation result, Theorem \ref{thm:interpolation_p}, the generation of a finite $I_2^{\PP}(g|f_\infty)$ together with a finite $\I_1^{\PP}\pa{f,g|f_\infty}$ would imply a generation of finite $\I^{\PP}_{p}\pa{f,g|f_\infty}$ for any $1\leq p\leq 2$. This implies that we only need to prove the creation of a finite $\I_1^{\PP}\pa{f,g|f_\infty}$, but as the same proof holds for any $p\in [1,2]$ we will not pursue the interpolation route at this point (though it will play an important role in the proof of our main theorem). 
%Before stating our main theorem for this subsection we would like to mention that we call the generation of $\I^{\PP}_p$ with $p$ larger than the respective index of the entropies of $f$ and $g$ \textit{upper-contractivity} and similarly the generation of $\I^{\PP}_p$ with $p$ smaller than the respective index of the entropies of $f$ and $g$ \textit{lower-contractivity}.
 
The main theorem of this section is the following:\footnote{Recall that the generalised $2-$Fisher information is in fact the $2-$Fisher information of the second component, i.e. $\I_2^{\PP}\pa{f,g|f_\infty}=I_2^{\PP}\pa{g|f_\infty}$.}

\begin{theorem}\label{thm:contractivity}
	Let $f(t),g(t)$ be solutions to the Fokker-Planck equation \eqref{eq:fokkerplanck} with diffusion and drift matrices $\D$ and $\C$ which satisfy conditions \eqref{item:cond_semipositive}--\eqref{item:cond_no_invariant_subspace_to_kernel} and initial data $f_0,g_0$ such that $f_0$ is a non-negative unit mass function. Assume in addition that there exist $p_1,p_2\in (1,2]$ such that $e_{p_1}\pa{f_0|f_\infty}<\infty$ and $e_{p_2}\pa{\abs{g_0}|f_\infty}<\infty$ . Then: 
	\begin{enumerate}[(i)]
		\item \label{item:hyper} (hypercontractivity and upper-contractivity for $p=2$) There exists an explicit time, $t_1(p_2)$, which is independent of $g_0$ such that for all $t\geq t_1(p_2)$ we have that the solution to the equation \eqref{eq:fokkerplanck} satisfies
%		\begin{equation}\label{eq:hypercontractivity_L_2_with_entropy_condition}
%			\begin{split}
%			\norm{g(t)}^2_{L^2\pa{\R^d,f_\infty^{-1}}} \leq \begin{cases} \mathcal{A}(p_2) \pa{p_2\pa{p_2-1}e_{p_2}\pa{\abs{g_0}|f_\infty}+1}^{\frac{2}{p_2}} & p_2\in (1,2)\\
%				2e_2\pa{g_0|f_\infty}+1 & p_2=2
%				\end{cases}
%			\end{split}
%		\end{equation}
			\begin{equation}\label{eq:hypercontractivity_L_2_with_entropy_condition}
		\begin{split}
			\norm{g(t)}^2_{L^2\pa{\R^d,f_\infty^{-1}}} \leq  \mathcal{A}(p_2) \rpa{p_2\pa{p_2-1}e_{p_2}\pa{\abs{g_0}|f_\infty}+1}^{\frac{2}{p_2}}
		\end{split}
	\end{equation}
	with
	\begin{equation}\nonumber
		\mathcal{A}(p_2) = \pa{\frac{8}{3}}^d \pa{\frac{2p_2-1}{p_2-1}}^{\frac{d\pa{p_2-1}}{p_2}} \pa{2p_2}^{\frac{2}{p_2-1}}.
	\end{equation}
	Moreover,
		\begin{equation}\label{eq:hypercontractivity_entropy_with_entropy_condition}
		\begin{split}
			e_2\pa{g(t)|f_\infty} \leq  \max\pa{\mathcal{A}(p_2),1} \rpa{p_2\pa{p_2-1}e_{p_2}\pa{\abs{g_0}|f_\infty}+1}^{\frac{2}{p_2}}
		\end{split}.
	\end{equation}
%	\begin{equation}\label{eq:hypercontractivity_entropy_with_entropy_condition}
%	\begin{split}
%		e_2\pa{g(t)|f_\infty} \leq \begin{cases} \frac{3}{2}\max\pa{\mathcal{A}(p_2),1} \pa{p_2\pa{p_2-1}e_{p_2}\pa{\abs{g_0}|f_\infty}+1}^{\frac{2}{p_2}} & p_2\in (1,2)\\
%			e_2\pa{g_0|f_\infty} & p_2=2
%		\end{cases}
%	\end{split}.
%\end{equation}
		In addition,
		\begin{equation}\label{eq:hypercontractivity_with_entropy_condition}
			\begin{split}
					I_{2}^{\PP}(g(t)|f_\infty) \leq 
						\mathcal{B}(p_2)\rpa{p_2\pa{p_2-1}e_{p_2}\pa{\abs{g_0}|f_\infty}+1}^{\frac{2}{p_2}} 
			\end{split}
		\end{equation}
%			\begin{equation}\label{eq:hypercontractivity_with_entropy_condition}
%		\begin{split}
%			I_{2}^{\mathbf{I}}(g(t)|f_\infty) \leq
%			\begin{cases}
%				\mathcal{B}(p_2)\pa{p_2\pa{p_2-1}e_{p_2}\pa{\abs{g_0}|f_\infty}+1}^{\frac{2}{p_2}} & p_2\in (1.2) \\
%				2^{d+1}e_2\pa{g_0|f_\infty} & p_2=2
%			\end{cases} 
%		\end{split}
%	\end{equation}
	with 
	\begin{equation}\nonumber
	\mathcal{B}(p_2)=	2^{\frac{3d}{2}} \pa{\frac{2p_2-1}{p_2-1}}^{\frac{d\pa{p_2-1}}{p_2}} \pa{2p_2}^{\frac{2}{p_2-1}}\mathrm{p}_{\mathrm{max}},
	\end{equation}
	where $\mathrm{p}_{\mathrm{max}}$ is the largest eigenvalue of $\PP$.
	\item \label{item:hypo} (upper and lower-contractivity for $p\in [1,2)$) For any $p\in [1,2)$ and any $\eta>0$ there exists an explicit time, $t_1(p_1,p_2,\eta)>0$, that may become unbounded as $\eta$ go to zero, such that for all $t\geq t_1(p_1,p_2,\eta)$ we have that 
	\begin{equation}\label{eq:hypocontractivity_with_entropy_condition}
		\begin{split}
				\I^{\PP}_p\pa{f(t),g(t)|f_\infty}\leq \mathcal{C}(p,p_1,p_2,\eta) &\rpa{p_1\pa{p_1-1}e_{p_1}\pa{f_0|f_\infty}+1}^{\frac{\eta\pa{2-p}}{p_1}}\\
			&\times \rpa{p_2\pa{p_2-1}e_{p_2}\pa{\abs{g_0}|f_\infty}+1}^{\frac{2}{p_2}}
		\end{split}
	\end{equation}
with 
\begin{equation}\nonumber
		\begin{split}\mathcal{C}(p,p_1,p_2,\eta) = &
			2^{\pa{\eta+1-\frac{d}{2}}(2-p)+1+2d}3^{\frac{d(2-p)}{2}}
		%	\pa{2p_2}^{\frac{\eta(2-p)}{p_2-1}}
			\pa{2p_2}^{\frac{2}{p_2-1}}\\ 
			&\times\pa{\frac{2p_1-1}{p_1-1}}^{\frac{d\eta\pa{p_1-1}\pa{2-p}}{2p_1}} \pa{\frac{2p_2-1}{p_2-1}}^{\frac{d\pa{p_2-1}}{p_2}}\mathrm{p}_{\mathrm{max}},
	\end{split}
	\end{equation}
and where $\mathrm{p}_{\mathrm{max}}$ is as in \eqref{item:hyper}.
\end{enumerate}
%	 If $e_2\pa{\abs{g_0}|f_\infty}<\infty$ then the above can be improved to 
%	\end{enumerate}
%	\begin{equation}\label{eq:hypocontractivity_g_in_L_2}
%	\begin{gathered}
%		\I^{{\bf{I}}}_p\pa{f(t),g(t)|f_\infty}\leq 	\mathcal{D}\pa{p,p_2,\eta}
%		\pa{p_1\pa{p_1-1}e_{p_1}\pa{f_0|f_\infty}+1}^{\frac{\eta\pa{2-p}}{p_1}}e_2\pa{g_0|f_\infty}
%	\end{gathered}
%\end{equation}
%with 
%\begin{equation}\nonumber
%	\mathcal{D}\pa{p,p_1,\eta}=2^{\pa{\eta+1-\frac{d}{2}}(2-p)+4+\frac{5d}{2}}3^{\frac{d(2-p)}{2}}\pa{\frac{2p_1-1}{p_1-1}}^{\frac{d\eta\pa{p_1-1}\pa{2-p}}{2p_1}} 
%\end{equation}
\end{theorem}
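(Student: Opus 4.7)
The plan splits naturally along the two parts of Theorem~\ref{thm:contractivity}. For Part~(\ref{item:hyper}), the $L^2$-hypercontractivity bound \eqref{eq:hypercontractivity_L_2_with_entropy_condition} follows the strategy of Theorem~4.3 in \cite{AEW18}. First I consider the time-dependent $L^{p(t)}(\R^d, f_\infty^{1-p(t)})$-norm of $g(t)$ along a smooth curve $p(t)$ interpolating from $p(0)=p_2$ to $p(t_1(p_2))=2$. Differentiating in $t$ and invoking the Gross logarithmic Sobolev inequality for the Gaussian $f_\infty$ closes an ODE, with the choice of $p'(t)$ tuned so that the non-coercive terms cancel; solving gives \eqref{eq:hypercontractivity_L_2_with_entropy_condition}, and the mass bound $\|g(t)\|_{L^1} \leq \||g_0|\|_{L^{p_2}(\R^d,f_\infty^{1-p_2})}$ (via \eqref{eq:exponential_controls_L_1}) combined with \eqref{eq:p_entropy_on_L_1} upgrades the $L^2$-norm bound to the $e_2$-bound \eqref{eq:hypercontractivity_entropy_with_entropy_condition}. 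To handle signed $g_0$, I decompose $g = g^+ - g^-$, where $g^\pm$ solve the Fokker-Planck equation with non-negative initial data $g_0^\pm$, apply the hypercontractivity separately and recombine via $\|g(t)\|_{L^2(f_\infty^{-1})}\leq \|g^+(t)\|_{L^2(f_\infty^{-1})}+\|g^-(t)\|_{L^2(f_\infty^{-1})}$.

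The upper-contractivity bound \eqref{eq:hypercontractivity_with_entropy_condition} on $I_2^{\PP}$ then comes from a mean-value trick combined with Theorem~\ref{thm:Fisher_decay}. The entropy-dissipation identity $-\tfrac{d}{dt} e_2(g(t)|f_\infty) = I_2^{\D}(g(t)|f_\infty)$ integrated over $[t_1(p_2), 2t_1(p_2)]$ gives, using \eqref{eq:hypercontractivity_entropy_with_entropy_condition}, a time $t^*$ in that interval at which $I_2^{\D}(g(t^*)|f_\infty)$ is controlled quantitatively by $e_2(g(t_1(p_2))|f_\infty)$. When $\D$ is positive definite this is immediately equivalent to $I_2^{\PP}(g(t^*)|f_\infty)<\infty$ up to the factor $\mathrm{p}_{\mathrm{max}}/\mathrm{d}_{\mathrm{min}}$; in the degenerate case one instead invokes the instantaneous creation of a finite modified Fisher information proved in Theorem~4.8 of \cite{AE}. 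Applying Theorem~\ref{thm:Fisher_decay} with $\psi=\psi_2$ then propagates the bound forwards in time to all $t\geq t_1(p_2)$, the exponential prefactor being absorbed into $\mathcal{B}(p_2)$.

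For Part~(\ref{item:hypo}), the reduction to Part~(\ref{item:hyper}) is via the interpolation Theorem~\ref{thm:interpolation_p},
\begin{equation*}
\I_p^{\PP}(f(t),g(t)|f_\infty) \leq \frac{1}{(p-1)^{p-1}(2-p)^{2-p}}\, \I_1^{\PP}(f(t),g(t)|f_\infty)^{2-p}\, I_2^{\PP}(g(t)|f_\infty)^{p-1}.
\end{equation*}
The second factor is controlled by Part~(\ref{item:hyper}) applied to $g$. The remaining \emph{lower-contractivity} step — bounding $\I_1^{\PP}(f(t),g(t)|f_\infty)$ — proceeds by producing a pointwise Gaussian-type lower bound on the positive solution $f(t,\cdot)$. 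From Part~(\ref{item:hyper}) applied to $f$ one has $f(t,\cdot)\in L^2(\R^d,f_\infty^{-1})$ after $t_1(p_1)$; combined with the positivity of the Fokker-Planck semigroup and the explicit Mehler-type kernel representation for \eqref{eq:fokkerplanck} of \cite{AAS,ASS}, this yields a quantitative estimate of the form $f(t,x)/f_\infty(x)\geq c_1 e^{-c_2 |x|^2}$, with constants depending explicitly on $e_{p_1}(f_0|f_\infty)$ and on $\eta$. The ratio $f_\infty/f$ appearing in $\I_1^{\PP}$ is thereby dominated by a controllable exponential weight, and combining with the $I_2^{\PP}$-bound for $g(t)$ yields a quantitative bound on $\I_1^{\PP}(f(t),g(t)|f_\infty)$ of the form $C\, [p_1(p_1-1) e_{p_1}(f_0|f_\infty)+1]^{\eta/p_1}[p_2(p_2-1)e_{p_2}(|g_0||f_\infty)+1]^{2/p_2}$.

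The main obstacle is precisely this lower-contractivity step: producing a pointwise lower bound on $f(t,x)$ that is simultaneously quantitative in $e_{p_1}(f_0|f_\infty)$ and sharp enough to give the advertised constant $\mathcal{C}(p,p_1,p_2,\eta)$. The parameter $\eta$ encodes the standard trade-off — smaller $\eta$ forces a tighter Gaussian lower bound, which is only available after longer waiting times, hence $t_1(p_1,p_2,\eta)\to\infty$ as $\eta\to 0$. Once the lower bound is in hand, inserting it into the interpolation inequality above together with \eqref{eq:hypercontractivity_with_entropy_condition} and bookkeeping the constants produces \eqref{eq:hypocontractivity_with_entropy_condition} with the stated explicit form of $\mathcal{C}(p,p_1,p_2,\eta)$.
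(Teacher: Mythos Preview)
Your approach differs substantially from the paper's at nearly every step. The paper does not use the Gross log-Sobolev/Nelson curve $p(t)$ argument for the $L^2$-bound, nor the mean-value trick for the $I_2^{\PP}$-bound, nor Theorem~\ref{thm:interpolation_p} for Part~(\ref{item:hypo}). Instead, all three estimates are obtained directly from the explicit Mehler-type kernel representation \eqref{eq:exactsolution}--\eqref{eq:exactsolution_grad}: one waits until $\W(t)\approx\II$ and $\|e^{-\C t}\|$ is small (Lemma~\ref{lem:proved_propertires_for_hyper_hypo}), then bounds $|g(t,x)|$ and $|\nabla g(t,x)+xg(t,x)|$ pointwise and integrates against Gaussian weights (Lemma~\ref{lem:hypercontractivity}), the exponential-moment condition being converted to the $p_2$-entropy condition via Lemma~\ref{lem:hyper_conditions}. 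For Part~(\ref{item:hypo}) the paper combines the pointwise bound $\psi_p''\big(f(t)/f_\infty\big)\leq C e^{(2-p)\eta|x|^2}$ coming from the Gaussian lower bound on $f$ (Theorem~\ref{thm:lower_bound}, Corollary~\ref{cor:lower_bound_psi_p}) directly with these same kernel estimates for $\nabla g+xg$ (Lemma~\ref{lem:hypocontractivity}), rather than passing through $\I_1$ and interpolating. The specific constants $\mathcal{A},\mathcal{B},\mathcal{C}$ in the statement arise precisely from these Gaussian integrals and would not emerge in the stated form from your route. Incidentally, Theorem~4.3 of \cite{AEW18} is itself kernel-based, not the log-Sobolev argument you describe.

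Your Part~(\ref{item:hyper}) strategy is nonetheless a viable alternative in principle: since $\nabla h^T\C_{as}\nabla h=0$, the anti-symmetric drift drops out of the $L^p$-dissipation and Nelson's argument with the Gaussian log-Sobolev does close (with different constants), and the mean-value step can be made to work after a harmless doubling of the waiting time. There is, however, a genuine gap in your Part~(\ref{item:hypo}). Once you have $f_\infty(x)/f(t,x)\leq C e^{\eta|x|^2}$, bounding $\I_1^{\PP}\big(f(t),g(t)\,\big|\,f_\infty\big)$ requires controlling the \emph{weighted} integral
\[
\int_{\R^d} e^{\eta|x|^2}\,\nabla\Big(\tfrac{g(t,x)}{f_\infty(x)}\Big)^T\PP\,\nabla\Big(\tfrac{g(t,x)}{f_\infty(x)}\Big)\,f_\infty(x)\,dx,
\]
and this is \emph{not} dominated by $I_2^{\PP}\big(g(t)\,\big|\,f_\infty\big)$ --- the extra factor $e^{\eta|x|^2}$ cannot be absorbed into a bound that only sees the unweighted Fisher information. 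The paper closes this precisely by going back to the pointwise kernel estimate for $|\nabla g+xg|$ (the proof of Lemma~\ref{lem:hypocontractivity}, in particular \eqref{eq:fisher_hypo_proof_I}), tuning the free parameters so that the combined Gaussian exponent stays strictly negative. Without that ingredient, or an equivalent weighted Fisher bound on $g$, your argument for \eqref{eq:hypocontractivity_with_entropy_condition} does not close.
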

\begin{remark}\label{rem:also_works_for_f}
	It is worth to note that while part \eqref{item:hyper} only considers the function $g(t)$, the fact that $f_0$ has a finite $p_1-$entropy and is a non-negative function implies that the same result, with $p_2$ replaced by $p_1$, hold for $f(t)$.
\end{remark}
The proof of this theorem will occupy the remaining part of this subsection and will follow similar ideas to the hypercontractivity result in Theorem 4.3, \cite{AEW18}. The singularity of $\psi^{\prime\prime}_p$  for $p\in [1,2)$, the generating functions of the $p-$Fisher information, will require another ingredient which was not considered before -- a generation of a ``close to equilibrium'' lower bound for the solution to the Fokker-Planck equation after enough time has passed. This lower bound will give us the ability to control the singular part of $\psi^{\prime\prime}_p$ when $p\in [1,2)$. The authors are unaware of any reference where this result is stated.\\
The steps we will undertake to rigorously prove Theorem \ref{thm:contractivity} are:
\begin{itemize}
	\item Since for any $L^1$ function (with or without unit mass)
	$$e_p\pa{\abs{f} \Big | f_\infty}=\frac{1}{p\pa{p-1}}\pa{\norm{f}^p_{L^p\pa{\R^d,f^{1-p}_{\infty}}}-p\pa{\norm{f}_{L^1\pa{\R^d}}-1}-1},$$
	we see that $e_p\pa{\abs{f}|f_\infty}<\infty$ for some $p\in (1,2]$ if and only if $\abs{f}^p$ has a finite exponential moment of form $e^{\frac{\pa{p-1}\abs{x}^2}{2}}$. This means that we can interchange the entropic conditions of Theorem \ref{thm:contractivity} with the corresponding exponential moments. 
	\item The explicit expression for the solution of the Fokker-Planck equation under assumptions \eqref{item:cond_semipositive}--\eqref{item:cond_no_invariant_subspace_to_kernel} (which can be found in \S 1 in \cite{Ho67} or \S 6.5 in \cite{RiFP89}) reads as
	\begin{equation}\label{eq:exactsolution}
		f(t,x)=\frac{1}{\pa{2\pi}^{\frac{d}{2}}\sqrt{\det \W(t)}}\int_{\R^d}e^{-\frac{1}{2}\pa{x-e^{-\C t}y}^T \W(t)^{-1}\pa{x-e^{-\C t}y}} f_0(y)dy,
	\end{equation}
	where 
	\begin{equation*}%\label{eq:W}
		\W(t):=2\int_{0}^t e^{-\C s}\D e^{-\C^T s}ds.
	\end{equation*} 
	Moreover,
	\begin{equation}\label{eq:exactsolution_grad}
		\begin{gathered}
			\nabla f(t,x)=-\frac{1}{\pa{2\pi}^{\frac{d}{2}}\sqrt{\det \W(t)}}\int_{\R^d}e^{-\frac{1}{2}\pa{x-e^{-\C t}y}^T \W(t)^{-1}\pa{x-e^{-\C t}y}}\\
			\times \W(t)^{-1}\pa{x-e^{-\C t}y} f_0(y)dy.
		\end{gathered}
	\end{equation}
	We will be able to estimate $e_2$ and $I^{\PP}_2$ after enough time has passed so that $\W(t)\approx \II$ and $\norm{e^{-\C t}}$ is very small.
	\item Showing the generation of an ``almost'' equilibrium lower bound to the solutions of our equation will allow us to proceed the same way as in the case $p=2$ and to show the creation of finite $\I_p^{\PP}$ after enough time has passed. 
\end{itemize}
As the proof that will now follow is quite technical, we recommend \amit{that} the experienced reader \amit{will} skip the remaining part of the subsection, after glancing at the theorem referring to the creation of a lower bound, Theorem \ref{thm:lower_bound}.

We start by making explicit the connection between the $p-$entropies and the exponential moment of a function, at least the implication we will use in this subsection, as well as the technical statement that assures us that $\W(t)$ is approximately $\II$ and $\norm{e^{-\C t}}$ is very small. 
%We will also include in the later a time evolution estimate for the matrix $e^{-\C t}$ which appears in the expressions for the solution for the Fokker-Planck equation \eqref{eq:exactsolution} and \eqref{eq:exactsolution_grad}, and which we will need in our long time study.

\begin{lemma}\label{lem:hyper_conditions}
	For any $1<p\leq 2$ we have that for any $f\in L^1\pa{\R^d}$ 
	\begin{equation}\label{eq:exp_estimation_by_norm_general_function}
		\int_{\R^d}e^{ \frac{(p-1)\abs{x}^2}{2(2p-1)}}\abs{f(x)}dx \leq \pa{\frac{2p-1}{p-1}}^{\frac{d(p-1)}{2p}}\pa{2p}^{\frac{1}{p-1}}\rpa{p(p-1)e_p\pa{\abs{f}|f_\infty}+1}^{\frac{1}{p}}.
	\end{equation}
	Moreover, if $\int_{\R^d}\abs{f(x)}dx=1$ we can improve the above and get
	\begin{equation}\label{eq:exp_estimation_by_entropy_with_unit_mass}
		\int_{\R^d}e^{\frac{(p-1)\abs{x}^2}{2(2p-1)}}\abs{f(x)}dx \leq  \pa{\frac{2p-1}{p-1}}^{\frac{d(p-1)}{2p}}\rpa{p(p-1)e_{p}\pa{\abs{f}|f_\infty}+1}^\frac{1}{p}.
	\end{equation}
\end{lemma}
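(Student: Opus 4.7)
The plan is to apply Hölder's inequality to split the integrand into a factor controlled by the weighted $L^p$ norm that appears in the definition of the $p$-entropy and a Gaussian factor that can be computed explicitly. Since we are working in the coordinate system where $\K=\II$, we have $f_\infty(x)=(2\pi)^{-d/2}e^{-|x|^2/2}$. Setting $\alpha:=(p-1)/(2(2p-1))$, I would write
$$e^{\alpha|x|^2}|f(x)| \;=\; \bigl(|f(x)|\,f_\infty(x)^{(1-p)/p}\bigr)\cdot\bigl(e^{\alpha|x|^2}f_\infty(x)^{(p-1)/p}\bigr)$$
and apply Hölder with conjugate exponents $p$ and $p'=p/(p-1)$. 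The $p$-th power of the first factor integrates to exactly $\|f\|^p_{L^p(\R^d,f_\infty^{1-p})}$, while the $p'$-th power of the second is a Gaussian of exponent $\alpha p'-\tfrac12$. A direct computation gives $\alpha p' - \tfrac12 = -(p-1)/(2(2p-1))<0$ (the integrability is built into the specific choice of $\alpha$ since $p<2p-1$ for $p>1$), and evaluating the Gaussian yields the prefactor $\bigl(\tfrac{2p-1}{p-1}\bigr)^{d(p-1)/(2p)}$.

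This reduces the problem to bounding $\|f\|_{L^p(f_\infty^{1-p})}$ in terms of the entropy. In the unit-mass case \eqref{eq:p_entropy_on_L_1} gives directly $\|f\|^p_{L^p(f_\infty^{1-p})}=p(p-1)e_p(|f||f_\infty)+1$, which immediately produces \eqref{eq:exp_estimation_by_entropy_with_unit_mass}. For the general case, let $X:=\|f\|_{L^p(f_\infty^{1-p})}$ and $E:=p(p-1)e_p(|f||f_\infty)+1\geq 1$. Combining \eqref{eq:p_entropy_on_L_1} with the elementary bound $\|f\|_{L^1}\leq X$ (from the Hölder inequality \eqref{eq:exponential_controls_L_1} already cited in the paper) yields
$$X^p \;\leq\; E + p(X-1) \;\leq\; E + pX.$$

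From here a short case distinction closes the argument: either $pX\leq X^p/2$, in which case $X^p\leq 2E$, or else $X^{p-1}\leq 2p$, in which case $X^p\leq (2p)^{p/(p-1)}\leq (2p)^{p/(p-1)}E$. Using $2^{1/p}\leq (2p)^{1/(p-1)}$ to absorb the first alternative, one obtains the uniform bound $X\leq (2p)^{1/(p-1)}E^{1/p}$, which combined with the Hölder estimate yields \eqref{eq:exp_estimation_by_norm_general_function}. The only delicate point is choosing the threshold in the case split so as to recover the precise constant $(2p)^{1/(p-1)}$ claimed in the lemma; a Young's inequality approach with a free parameter $\epsilon$ gives a bound of the same qualitative form but with a slightly larger constant, so the case-split is the cleanest route. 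No conceptual obstacle arises.
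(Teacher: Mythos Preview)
Your proposal is correct and follows essentially the same approach as the paper's own proof: both use H\"older with exponents $p$ and $p/(p-1)$ to reduce to a bound on $\|f\|_{L^p(\R^d,f_\infty^{1-p})}$, and both extract that bound from $X^p\le E+pX$ via the same dichotomy $X^{p-1}\gtrless 2p$, using $E\ge1$ and $2^{1/p}\le(2p)^{1/(p-1)}$ to merge the cases. The only cosmetic difference is the order of presentation (you apply H\"older first and then bound the norm, while the paper does the reverse) and the way the H\"older splitting is written.
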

The proof is a relatively straightforward computation and as such is left to Appendix \ref{app:b}.

\begin{lemma}\label{lem:proved_propertires_for_hyper_hypo}
	Consider the Fokker-Planck equation \eqref{eq:fokkerplanck} with diffusion and drift matrices $\D$ and $\C$ respectively. Assume in addition that conditions \eqref{item:cond_semipositive}--\eqref{item:cond_no_invariant_subspace_to_kernel} are satisfied and that $\D=\C_s$. Then there exists a constant $\widetilde{c}>0$ that only depends on $\C$ such that 
	$$\norm{e^{-\C t}} \leq \widetilde{c}\pa{1+t^n}e^{-\mu t},$$
	where $\mu$ is as in \eqref{eq:def_of_mu}, $n$ being the maximal defect number associated to $\mu$, and the norm of a matrix is its spectral norm. Moreover, for any $\epsilon>0$ and $t\geq \widetilde{t}(\epsilon)$ where
	\begin{equation}\label{eq:example_for_t_2}
		\widetilde{t}\pa{\epsilon}:=\frac{2}{\mu} \log \pa{\frac{\widetilde{c}\pa{1+\pa{\frac{2n}{\mu e}}^n}}{\epsilon}},
	\end{equation}
	we have that
	\begin{equation}\label{eq:hyperproofIII}
		\norm{e^{-\C t}} \leq \epsilon.
	\end{equation}
	In addition, there exists a constant $\widehat{c}>0$ that only depends on $\C$ such that for any $\epsilon>0$ we have \amit{if} $t\geq \widehat{t}(\epsilon)$ where
	\begin{equation}\label{eq:example_for_t_1}
		\widehat{t}(\epsilon):=\frac{1}{\mu} \log\pa{\frac{\widehat{c}(1+\epsilon)\pa{1+\pa{\frac{2n}{\mu e}}^{2n}}}{\epsilon}},
	\end{equation}
	with $\mu$ and $n$ as above, then
	\begin{equation}\label{eq:hyperproof0}
		\max \pa{\norm{\W(t)-\mathbf{I}},\norm{\W^{-1}(t)-\mathbf{I}}}\leq \epsilon.
	\end{equation} 
	Consequently 
		\begin{equation}\label{eq:hyperproofI}
		e^{-\frac{1}{2}\pa{1+\epsilon}\abs{x-e^{-\C t}y}^2}  \leq e^{-\frac{1}{2}\pa{x-e^{-\C t}y}^T \W(t)^{-1}\pa{x-e^{\C t}y}} \leq e^{-\frac{1}{2}\pa{1-\epsilon}\abs{x-e^{-\C t}y}^2} 
	\end{equation}
	and
	\begin{equation}\label{eq:hyperproofII}
		(1-\epsilon)^d \leq \det{\W(t)} \leq (1+\epsilon)^d.
	\end{equation}
\end{lemma}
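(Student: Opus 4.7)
\medskip

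\textbf{Proof proposal.} The plan is to combine standard Jordan-normal-form estimates for $e^{-\C t}$ with the identity $\W(\infty)=\II$ that follows from the normalisation $\D=\C_s$ and the Lyapunov equation, and then to use a Neumann-series perturbation argument to pass from $\W(t)-\II$ to $\W(t)^{-1}-\II$. The inequalities (\ref{eq:hyperproofI}) and (\ref{eq:hyperproofII}) will then be immediate consequences of spectral bounds on symmetric matrices.

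First I would prove the pointwise decay $\norm{e^{-\C t}}\le \widetilde{c}(1+t^n)e^{-\mu t}$. Writing $\C=\mathbf{S}\mathbf{J}\mathbf{S}^{-1}$ in Jordan form and using that the largest Jordan block associated with an eigenvalue of real part $\mu$ has size $n+1$, the entries of $e^{-\mathbf{J}t}$ restricted to the blocks of eigenvalue $\lambda$ with $\Re\lambda=\mu$ have modulus bounded by $(t^k/k!) e^{-\mu t}$ with $k\le n$, while all other blocks decay strictly faster; collecting terms and absorbing the binomial/Jordan constants into $\widetilde{c}=\widetilde{c}(\C)$ gives the claim. To deduce (\ref{eq:hyperproofIII}) I would use the standard trick
\begin{equation*}
t^n e^{-\mu t}=\bigl(t^n e^{-\mu t/2}\bigr)e^{-\mu t/2}\le \pa{\tfrac{2n}{\mu e}}^n e^{-\mu t/2},
\end{equation*}
since $s\mapsto s^n e^{-\mu s/2}$ attains its maximum $(2n/(\mu e))^n$ at $s=2n/\mu$. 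Consequently $\norm{e^{-\C t}}\le \widetilde{c}\bigl(1+(2n/(\mu e))^n\bigr)e^{-\mu t/2}$, and setting the right-hand side equal to $\epsilon$ and solving for $t$ yields exactly $\widetilde{t}(\epsilon)$.

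For $\W(t)$ I would exploit that under our reduction ($\D=\C_s$) the Lyapunov equation $2\D=\C\K+\K\C^T$ has the unique positive-definite solution $\K=\II$, whence
\begin{equation*}
\II-\W(t)=2\int_t^\infty e^{-\C s}\D\,e^{-\C^T s}\,ds.
\end{equation*}
Bounding the integrand by $2\widetilde{c}^{\,2}\norm{\D}(1+s^{2n})e^{-2\mu s}$ via $(1+s^n)^2\le 2(1+s^{2n})$, and applying the same maximisation trick to $s^{2n}e^{-\mu s}$ (now bounded by $(2n/(\mu e))^{2n}$), gives a bound of the form $\norm{\W(t)-\II}\le \widehat{c}\bigl(1+(2n/(\mu e))^{2n}\bigr)e^{-\mu t}$ with a constant $\widehat{c}=\widehat{c}(\C)$. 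To get $\norm{\W(t)^{-1}-\II}\le \epsilon$ as well I would use the Neumann-series bound: whenever $\delta:=\norm{\II-\W(t)}<1$, the identity $\W(t)^{-1}-\II=\sum_{k\ge 1}(\II-\W(t))^k$ gives $\norm{\W(t)^{-1}-\II}\le \delta/(1-\delta)$. Asking $\delta/(1-\delta)\le\epsilon$ forces $\delta\le \epsilon/(1+\epsilon)$, which explains the $(1+\epsilon)$ appearing in the denominator inside the logarithm; solving $\widehat{c}\bigl(1+(2n/(\mu e))^{2n}\bigr)e^{-\mu t}\le\epsilon/(1+\epsilon)$ for $t$ yields $\widehat{t}(\epsilon)$ exactly as stated.

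Finally, (\ref{eq:hyperproofI}) and (\ref{eq:hyperproofII}) follow from elementary spectral calculus: $\W(t)$ is symmetric positive definite (as a Gramian), $\W(t)^{-1}$ is symmetric, and $\norm{\W(t)^{-1}-\II}\le\epsilon$ forces every eigenvalue of $\W(t)^{-1}$ into $[1-\epsilon,1+\epsilon]$, whence $(1-\epsilon)\abs{z}^2\le z^T\W(t)^{-1}z\le (1+\epsilon)\abs{z}^2$ for every $z\in\R^d$; taking $z=x-e^{-\C t}y$ and exponentiating yields (\ref{eq:hyperproofI}), while multiplying eigenvalues of $\W(t)$ (similarly trapped in $[1-\epsilon,1+\epsilon]$) gives (\ref{eq:hyperproofII}). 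The only mildly delicate step is bookkeeping the constants so that the final $\widetilde{c}$ and $\widehat{c}$ depend \emph{only} on $\C$ and so that the logarithmic times come out with the exact prefactors displayed; beyond that, the argument is a sequence of routine norm estimates.
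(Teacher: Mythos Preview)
Your proposal is correct and follows essentially the same route as the paper. Note that the paper does not actually prove this lemma in the text: it defers to Lemmas~4.7 and~4.8 of \cite{AEW18}, highlighting only the key inequality $(1+t^{2n})e^{-2\mu t}\le \bigl(1+(2n/(\mu e))^{2n}\bigr)e^{-\mu t}$, which is precisely the maximisation trick you use (in both its $n$- and $2n$-versions) to extract the explicit times $\widetilde{t}(\epsilon)$ and $\widehat{t}(\epsilon)$. Your identification $\W(\infty)=\K=\II$ via the Lyapunov equation under $\D=\C_s$, the tail-integral bound on $\II-\W(t)$, and the Neumann-series step explaining the factor $(1+\epsilon)$ in $\widehat{t}(\epsilon)$ are exactly the ingredients of the referenced argument; nothing is missing.
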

The proof of this lemma, which we will not provide here, can be found in Lemmas 4.7, Lemma 4.8 and their proofs in \cite{AEW18}, where the explicit time is found by using the following inequality:
$$\pa{1+t^{2n}}e^{-2\mu t} \leq \pa{1+\pa{\frac{2n}{\mu e}}^{2n}}e^{-\mu t},\qquad \forall t\geq 0.$$

With these tools at hand we turn our attention to proving part \eqref{item:hyper} of Theorem \ref{thm:contractivity} which requires nothing more than Lemma \eqref{lem:proved_propertires_for_hyper_hypo} and the formulae for the solution for the Fokker-Planck equation and its gradient. It is convenient to first prove a stronger statement:

\begin{lemma}\label{lem:hypercontractivity}
	Let $g(t)$ be the solution to the Fokker-Planck equation \eqref{eq:fokkerplanck} with diffusion and drift matrices $\D$ and $\C$ which satisfy Conditions \eqref{item:cond_semipositive}--\eqref{item:cond_no_invariant_subspace_to_kernel} and with initial datum $g_0$. Assume in addition that there exists $\epsilon>0$ such that 
	\begin{equation}\nonumber\label{eq:hyper_condition}
		\int_{\R^d}e^{\epsilon \abs{x}^2}\abs{g_0(x)}dx<\infty.
	\end{equation}
	Then there exists an explicit time, $\widetilde{t}_1(\epsilon)$, which is independent of $g_0$, such that for all $t\geq \widetilde{t}_1(\epsilon)$ we have that the solution to the equation satisfies
	\begin{equation}\label{eq:hypercontractivity_entropy}
		\norm{g(t)}^2_{L^2\pa{\R^d,f_\infty^{-1}}} \leq \pa{\frac{8}{3}}^d\pa{\int_{\R^d}e^{\epsilon \abs{x}^2}\abs{g_0(x)}dx}^2,
	\end{equation}
	and
	\begin{equation}\label{eq:hypercontractivity}
		I_{2}^{\mathbf{I}}(g(t)|f_\infty) \leq 2^{\frac{3d}{2}}\pa{\int_{\R^d}e^{\epsilon \abs{x}^2}\abs{g_0(x)}dx}^2.
	\end{equation}
\end{lemma}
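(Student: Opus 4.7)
The plan is to leverage the explicit solution formula \eqref{eq:exactsolution} and its gradient \eqref{eq:exactsolution_grad}, combined with the quantitative bounds from Lemma \ref{lem:proved_propertires_for_hyper_hypo}. Denote $M := \int e^{\epsilon|y|^2}|g_0(y)|dy < \infty$. The key point is that after an explicit waiting time, the parameters $\epsilon_1 := \max(\|\W(t)^{-1}-\II\|,\|\W(t)-\II\|)$ and $\epsilon_2 := \|e^{-\C t}\|$ can be made as small as desired; the kernel $K(t,x,y)$ in \eqref{eq:exactsolution} is then, up to controlled errors, a standard Gaussian in $x$ centred near the origin, so that both estimates reduce to explicit Gaussian calculations in which the $y$-variable appears only through harmless factors of the form $\exp(c\,\epsilon_2^2|y|^2)$.

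For the $L^2$-bound \eqref{eq:hypercontractivity_entropy}, I would factor $K(t,x,y)|g_0(y)| = \bigl(K(t,x,y)e^{-\epsilon|y|^2/2}|g_0(y)|^{1/2}\bigr)\bigl(e^{\epsilon|y|^2/2}|g_0(y)|^{1/2}\bigr)$ and apply Cauchy-Schwarz in $y$, obtaining
\begin{equation*}
|g(t,x)|^2 \le M \int K(t,x,y)^2 e^{-\epsilon|y|^2}|g_0(y)|\,dy.
\end{equation*}
Multiplying by $f_\infty(x)^{-1}$, integrating, and using Tonelli reduces matters to estimating the single integral $\int K(t,x,y)^2 f_\infty(x)^{-1}dx$ uniformly in $y$. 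This is a Gaussian integral in $x$: using \eqref{eq:hyperproofI}--\eqref{eq:hyperproofII} and completing the square (which requires $\epsilon_1 < 1/2$ for convergence) gives a dimensional prefactor times $\exp\bigl(\tfrac{1-\epsilon_1}{1-2\epsilon_1}|e^{-\C t}y|^2\bigr)\le \exp\bigl(\tfrac{(1-\epsilon_1)\epsilon_2^2}{1-2\epsilon_1}|y|^2\bigr)$. Choosing $\epsilon_2$ small enough that this exponent is at most $\epsilon$ absorbs the factor against $e^{-\epsilon|y|^2}$ and leaves $\int|g_0(y)|dy \le M$; a final tuning of $\epsilon_1$ (e.g.\ $\epsilon_1 = 1/4$ already gives the prefactor $(4\sqrt 2/3)^d<(8/3)^d$) yields the claimed constant.

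For the Fisher information bound \eqref{eq:hypercontractivity}, differentiating \eqref{eq:exactsolution} produces
\begin{equation*}
\nabla\!\Bigl(\tfrac{g(t,x)}{f_\infty(x)}\Bigr) = \int \bigl[(\II - \W(t)^{-1})x + \W(t)^{-1}e^{-\C t}y\bigr]\, \tfrac{K(t,x,y)}{f_\infty(x)}\, g_0(y)\,dy,
\end{equation*}
so the integrand carries a vector prefactor $q(x,y)$ with $|q(x,y)| \le \epsilon_1|x| + (1+\epsilon_1)\epsilon_2|y|$. Applying the same Cauchy-Schwarz trick to $I_2^{\II}(g(t)|f_\infty) = \int|\nabla(g/f_\infty)|^2 f_\infty\,dx$ and bounding $|q|^2 \le 2\epsilon_1^2|x|^2 + 2(1+\epsilon_1)^2\epsilon_2^2|y|^2$ splits the problem into one term of exactly the form treated above and a second term with an extra $|x|^2$ insertion in the Gaussian integral. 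This extra $|x|^2$, after completing the square in $x$, contributes only a dimensional constant coming from the Gaussian variance together with a centring contribution of size $\epsilon_2^2|y|^2$; both are absorbed as before. Tuning $\epsilon_1,\epsilon_2$ then delivers the constant $2^{3d/2}$, and the admissible time $\widetilde{t}_1(\epsilon)$ is taken as the maximum of the two times produced by \eqref{eq:example_for_t_1}--\eqref{eq:example_for_t_2} corresponding to the chosen $\epsilon_1,\epsilon_2$.

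The only nontrivial difficulty is the coupled tuning of $\epsilon_1 \in (0,\tfrac12)$ and $\epsilon_2 > 0$: the coefficient $\tfrac{(1-\epsilon_1)\epsilon_2^2}{1-2\epsilon_1}$ blows up as $\epsilon_1\nearrow\tfrac12$, so one must keep $\epsilon_1$ bounded away from $\tfrac12$ while still driving the resulting $\epsilon_2$-dependent exponent below the given moment parameter $\epsilon$ and the overall prefactor below the clean numerical constants $(8/3)^d$ and $2^{3d/2}$. No analytical ingredient beyond the explicit solution formulae and Lemma \ref{lem:proved_propertires_for_hyper_hypo} is required.
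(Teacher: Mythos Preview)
Your approach is correct and reaches the same conclusion, but it is organised differently from the paper's proof. Two differences are worth noting. First, where you apply Cauchy--Schwarz in $y$ with an inserted weight $e^{\pm\epsilon|y|^2}$ (so that one factor becomes $M$ and the other carries $K^2 e^{-\epsilon|y|^2}|g_0|$), the paper instead applies Minkowski's integral inequality directly to $\int K(\cdot,y)\,|g_0(y)|\,dy$; this avoids the artificial weight and leaves only a single $y$--integral of the kernel's $L^2$--norm against $|g_0|$. Second, the paper groups the vector prefactor as $(\W^{-1}-\II)(x-e^{-\C t}y)-e^{-\C t}y$, i.e.\ in terms of the kernel's natural variable $x-e^{-\C t}y$, and then absorbs the polynomial factor via the one--line trick $\epsilon_1|x-e^{-\C t}y|^2\le e^{\epsilon_1|x-e^{-\C t}y|^2}$, which merely shifts the Gaussian exponent from $1-\epsilon_1$ to $1-2\epsilon_1$. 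Your decomposition $q=(\II-\W^{-1})x+\W^{-1}e^{-\C t}y$ separates $x$ and $y$ and therefore forces an explicit second--moment Gaussian computation, producing an additive $\epsilon_1^2 d/(1-2\epsilon_1)$ term that must then be tuned away. Both routes are elementary and yield the stated constants; the paper's grouping simply keeps the bookkeeping shorter by never leaving the variable $x-e^{-\C t}y$.
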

\begin{proof}
	This proof follows the same ideas as those presented in Theorem 4.3 in \cite{AEW18}. For the sake of completion we provide it in Appendix \ref{app:b}.
\end{proof}

\begin{proof}[Proof of part \eqref{item:hyper} of Theorem \ref{thm:contractivity}]
Inequalities \eqref{eq:hypercontractivity_L_2_with_entropy_condition} and \eqref{eq:hypercontractivity_with_entropy_condition} for the case $\PP=\II$ follow immediately from \eqref{eq:hypercontractivity_entropy} and \eqref{eq:hypercontractivity} together with \eqref{eq:exp_estimation_by_norm_general_function} and the choice $\epsilon=\frac{p_2-1}{2\pa{p_2-1}}$.
For a general $\PP$, \eqref{eq:hypercontractivity_with_entropy_condition} follows from the fact that 
$$I^{\PP}_2\pa{f|f_\infty} \leq \mathrm{p}_{\mathrm{max}}I^{\II}_2\pa{f|f_\infty},$$
where $\mathrm{p}_{\mathrm{max}}$ is the largest eigenvalue of $\PP$.\\
 Inequality \eqref{eq:hypercontractivity_entropy_with_entropy_condition} follows from \eqref{eq:hypercontractivity_L_2_with_entropy_condition} together with the fact that for any non-negative $L^1$ function $f$
\begin{equation}\nonumber %\label{eq:e_2_L_2_connetion}
	\begin{gathered}
		e_2\pa{f|f_\infty}=\frac{1}{2}\pa{\norm{f}_{L^2\pa{\R^d,f_\infty^{-1}}}-2\norm{f}_{L^1\pa{\R^d}}+1}
		\leq\max\pa{\norm{f}_{L^2\pa{\R^d,f_\infty^{-1}}}, 1}.
	\end{gathered}
\end{equation}
%where we have used the fact that $\norm{f}_{L^1\pa{\R^d}} \leq \norm{f}_{L^2\pa{\R^d,f_\infty^{-1}}}$ (by Cauchy-Schwartz inequality). 
The proof is thus complete.
\end{proof}

We now turn our attention to the second part of Theorem  \ref{thm:contractivity}. We start, as promised, with the lower bound creation. 
\begin{theorem}\label{thm:lower_bound}
	Let $f(t)$ be a solution to the Fokker-Planck equation \eqref{eq:fokkerplanck} with diffusion and drift matrices $\D$ and $\C$ which satisfy conditions \eqref{item:cond_semipositive}--\eqref{item:cond_no_invariant_subspace_to_kernel} and with a non-negative unit mass initial datum $f_0$. Then for any $\eta>0$ there exists an explicit time $\widehat{t}_1(\eta)>0$ that depends on $\C$ and may become unbounded as $\eta$ goes to zero, such that for any $t\geq \widehat{t}_1\pa{\eta}$  one has that
	\begin{equation}\label{eq:lower_bound}
		f(t,x) \geq C_{\eta,f_0} e^{-\pa{\frac{1}{2}+\eta}\abs{x}^2}
	\end{equation} 
	where $C_{\eta,f_0}>0$ is an explicit constant that only depends on $\eta$ and the initial datum $f_0$. \\	
	If in addition there exists $\epsilon>0$ such that 
	\begin{equation}\label{eq:same_condition_eps}
		\int_{\R^d}e^{\epsilon \abs{x}^2}f_0(x)dx < \infty
	\end{equation}
	then one can choose
	\begin{equation}\label{eq:estimation_on_C_f_0}
		C_{\eta,f_0}=\frac{\pa{2\int_{\R^d}e^{\epsilon \abs{x}^2}f_0(x)dx}^{-\frac{\eta\pa{1+\eta}\pa{1+2\eta}}{8\epsilon}}}{2\pa{\pi(2+\eta)}^{\frac{d}{2}}} .
	\end{equation}
\end{theorem}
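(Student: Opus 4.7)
The plan is to exploit the explicit representation formula \eqref{eq:exactsolution} together with Lemma \ref{lem:proved_propertires_for_hyper_hypo}: for large $t$, the covariance $\W(t)$ is close to $\mathbf{I}$ and $\norm{e^{-\C t}}$ is small, so $f(t,x)$ behaves like a convolution of $f_0$ with a Gaussian close to the stationary $f_\infty$, and this convolution must inherit a Gaussian lower bound of the desired form.

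Concretely, for a parameter $\epsilon_0 > 0$ to be chosen later as a function of $\eta$, Lemma \ref{lem:proved_propertires_for_hyper_hypo} gives, for $t$ larger than an explicit threshold, the pointwise bound
\begin{equation*}
(x - e^{-\C t}y)^T \W(t)^{-1}(x - e^{-\C t}y) \leq (1+\epsilon_0)\abs{x - e^{-\C t}y}^2,\qquad \det \W(t) \leq (1+\epsilon_0)^d.
\end{equation*}
Plugging these into \eqref{eq:exactsolution} and then applying Young's inequality $\abs{x - e^{-\C t}y}^2 \leq (1+\theta)\abs{x}^2 + (1+\theta^{-1})\norm{e^{-\C t}}^2\abs{y}^2$ with $\theta > 0$ chosen so that $(1+\epsilon_0)(1+\theta) = 1 + 2\eta$, the target Gaussian $e^{-(\frac12 + \eta)\abs{x}^2}$ factors out of the integral, leaving
\begin{equation*}
f(t,x) \geq \frac{e^{-(\frac12 + \eta)\abs{x}^2}}{(2\pi(1+\epsilon_0))^{d/2}}\int_{\R^d} e^{-a(t)\abs{y}^2} f_0(y)\,dy,\qquad a(t) := \tfrac{1}{2}(1+\epsilon_0)(1+\theta^{-1})\norm{e^{-\C t}}^2.
\end{equation*}

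It then suffices to bound the integral from below by a positive constant. For the qualitative statement \eqref{eq:lower_bound} one uses only that $f_0$ is a probability density: for suitable $R = R(f_0)$ one has $\int_{\abs{y} \leq R} f_0\,dy \geq 1/2$, so the integral is at least $\tfrac{1}{2}e^{-a(t)R^2}$, which is positive once $t$ is large enough that $a(t)$ is bounded. For the quantitative bound \eqref{eq:estimation_on_C_f_0} under the exponential-moment assumption \eqref{eq:same_condition_eps}, a Chebyshev-type argument with $M := \int e^{\epsilon \abs{y}^2} f_0\,dy$ and the explicit choice $R^2 = \epsilon^{-1}\log(2M)$ yields $\int_{\abs{y}>R}f_0\,dy \leq e^{-\epsilon R^2}M = 1/2$, hence $\int_{\abs{y}\leq R}f_0\,dy \geq 1/2$, and therefore
\begin{equation*}
\int_{\R^d} e^{-a(t)\abs{y}^2} f_0(y)\,dy \geq \tfrac{1}{2} e^{-a(t)R^2} = \tfrac{1}{2}(2M)^{-a(t)/\epsilon}.
\end{equation*}

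The explicit constant \eqref{eq:estimation_on_C_f_0} is then a matter of bookkeeping: setting $\epsilon_0 = \eta/2$ produces the prefactor $(\pi(2+\eta))^{-d/2}$ and forces $\theta = 3\eta/(2+\eta)$, so that the exponent $\eta(1+\eta)(1+2\eta)/(8\epsilon)$ in \eqref{eq:estimation_on_C_f_0} requires merely $a(t) \leq \eta(1+\eta)(1+2\eta)/8$, i.e.\ a bound of the form $\norm{e^{-\C t}}^2 \leq c(\eta)$, which is guaranteed by Lemma \ref{lem:proved_propertires_for_hyper_hypo} past some explicit $\widehat{t}_1(\eta)$. The analytic content is light — the representation formula plus Young's inequality plus a Chebyshev estimate — and the main difficulty is simply to keep the constants aligned so that a single explicit threshold $\widehat{t}_1(\eta)$, depending only on $\C$ and $\eta$, dominates both the $\W$-approximation and $\norm{e^{-\C t}}$ conditions simultaneously.
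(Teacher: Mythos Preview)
Your proposal is correct and follows essentially the same route as the paper: both use the explicit solution formula \eqref{eq:exactsolution}, invoke Lemma \ref{lem:proved_propertires_for_hyper_hypo} to control $\W(t)^{-1}$, $\det\W(t)$, and $\norm{e^{-\C t}}$, restrict the mass of $f_0$ to a ball of radius $R$ chosen via the Chebyshev bound $\int_{\abs{y}>R}f_0\,dy\leq e^{-\epsilon R^2}M$, and then fix $\epsilon_0=\eta/2$ to land on the stated constants. The only cosmetic difference is that the paper expands $\abs{x-e^{-\C t}y}^2$ and absorbs the cross term via $a\abs{x}\leq \abs{x}^2+a^2/4$ after first localising to $\abs{y}\leq R$, whereas you use Young's inequality $\abs{x-z}^2\leq (1+\theta)\abs{x}^2+(1+\theta^{-1})\abs{z}^2$ on the full integral and localise afterwards; both produce the same exponent $\eta(1+\eta)(1+2\eta)/8$ once the parameters are matched.
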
  

\begin{proof}[Proof of Theorem \ref{thm:lower_bound}]
	We start by recalling the explicit expression for the solution to the system, given in \eqref{eq:exactsolution}. For a given $\epsilon_1>0$, to be chosen later, we consider  $\widehat{t}\pa{\epsilon_1}$, given by \eqref{eq:example_for_t_1}, for which  \eqref{eq:hyperproof0}, \eqref{eq:hyperproofI} and \eqref{eq:hyperproofII} are satisfied when $t\geq \widehat{t}\pa{\epsilon_1}$. As $f_0$ is non-negative we find that for any $R>0$
	$$f(t,x) \geq \frac{1}{\pa{2\pi(1+\epsilon_1)}^{\frac{d}{2}}}\int_{\R^d}e^{-\pa{\frac{1}{2}+\epsilon_1}\abs{x-e^{-\C t}y}^2}f_0(y)dy$$
	$$\geq \frac{1}{\pa{2\pi(1+\epsilon_1)}^{\frac{d}{2}}} e^{-\pa{\frac{1}{2}+\epsilon_1}\abs{x}^2} \int_{\abs{y}\leq R}e^{(1+2\epsilon_1)x^T e^{-\C t}y} e^{-\pa{\frac{1}{2}+\epsilon_1}\abs{e^{-\C t}y}^2}f_0(y)dy.$$
	Restricting $t$ further to be greater than $ \max\pa{\widehat{t}(\epsilon_1),\widetilde{t}(\epsilon_1)}$, where $\widetilde{t}(\epsilon_1)$ is given by \eqref{eq:example_for_t_2}, we see that due to \eqref{eq:hyperproofIII} 
	$$f(t,x) \geq \frac{1}{\pa{2\pi(1+\epsilon_1)}^{\frac{d}{2}}} e^{-\pa{\frac{1}{2}+\epsilon_1}\abs{x}^2}e^{-(1+2\epsilon_1)\epsilon_1 R \abs{x}}e^{-\pa{\frac{1}{2}+\epsilon_1}\epsilon_1^2 R^2}\int_{\abs{y}\leq R}f_0(y)dy.$$
	Choosing $R$ such that 
	\begin{equation}\label{eq:condition_on_R}
		\int_{\abs{y}\leq R}f_0(y)dy \geq \frac{1}{2}
	\end{equation}
	 and using the fact that for any $a>0$ 
	$$a\abs{x} \leq  \abs{x}^2+\frac{a^2}{4}$$ 
	we conclude with the choice of $a:=\pa{1+2\epsilon_1}R$ that
	\begin{equation}\nonumber %\label{eq:lowerbound_almost}
		f(t,x) \geq \frac{1}{2\pa{2\pi(1+\epsilon_1)}^{\frac{d}{2}}} e^{-\pa{\frac{1}{2}+2\epsilon_1}\abs{x}^2}e^{-\frac{(1+2\epsilon_1)^2\epsilon_1 R^2}{4} }e^{-\pa{\frac{1}{2}+\epsilon_1}\epsilon_1^2 R^2}.
	\end{equation}
	Choosing $\epsilon_1:=\frac{\eta}{2}$ and $\widehat{t}_1\pa{\eta}:=\max\pa{\widehat{t}\pa{\frac{\eta}{2}},\widetilde{t}\pa{\frac{\eta}{2}}}$ shows the first statement of the theorem with 
	\begin{equation}\label{eq:C_f_0_non_explicit}
		C_{\eta,f_0}(R)=\frac{1}{2\pa{\pi(2+\eta)}^{\frac{d}{2}}}e^{-\frac{\eta(1+\eta)\pa{1+2\eta}R^2}{8} }.
	\end{equation}

	To show the second statement of the theorem we will use condition \eqref{eq:same_condition_eps} to find $R$ that would satisfy condition \eqref{eq:condition_on_R}. \\
	%In fact, any type of integral control, such as finite moments, will be sufficient to get an explicit expression.\\
	Since 
	$$\int_{\abs{y}>R}f_0(y)dy \leq e^{-\epsilon R^2}\int_{\abs{y}>R}e^{\epsilon \abs{y}^2}f_0(y)dy \leq e^{-\epsilon R^2}\int_{\R^d}e^{\epsilon \abs{y}^2}f_0(y)dy$$
	we see that $\int_{\abs{y}\leq R}f_0(y)dy   \geq \frac{1}{2}$ when\footnote{Note that since $$\int_{\R^d}e^{\epsilon\abs{y}^2}f_0(y)dy \geq \int_{\R^d}f_0(y)dy=1$$ we have that $R^2$ is well defined.}
	$$R^2 := \frac{1}{\epsilon}\log\pa{2 \int_{\R^d}e^{\epsilon\abs{y}^2}f_0(y)dy}.$$
	Plugging this expression back in \eqref{eq:C_f_0_non_explicit} gives \eqref{eq:estimation_on_C_f_0} and completes the proof.
\end{proof}
\begin{corollary}\label{cor:lower_bound_psi_p}
	Under the same assumptions as Theorem \ref{thm:lower_bound} we see that if there exists $\epsilon>0$ such that
	\begin{equation}\nonumber
		\int_{\R^d}e^{\epsilon \abs{x}^2}f_0(x)dx < \infty
	\end{equation}
	then for any $\eta>0$ there exists an explicit time $\widehat{\widehat{t}}_1(\epsilon,\eta)>0$ that may become unbounded as $\eta$ goes to zero, such that for any $t\geq \widehat{\widehat{t}}_1(\epsilon,\eta)$  one has that
	\begin{equation}\label{eq:lower_bound_psi_p}
		\psi_p^{\prime\prime}\pa{\frac{f(t,x)}{f_\infty(x)}}\leq 2^{\pa{\eta+1-\frac{d}{2}}(2-p)}3^{\frac{d(2-p)}{2}}\pa{\int_{\R^d}e^{\epsilon \abs{y}^2}f_0(y)dy}^{\eta(2-p)} e^{\pa{2-p}\eta\abs{x}^2},
	\end{equation}
	for $1\leq p \leq 2$.
\end{corollary}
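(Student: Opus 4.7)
The plan is to leverage the pointwise lower bound on $f(t,x)$ from Theorem \ref{thm:lower_bound} in order to control the only potentially troublesome feature of $\psi_p''(y)=y^{p-2}$ for $p \in [1,2)$, namely its singularity at $y=0$.

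First, since $1\leq p \leq 2$, we rewrite
\begin{equation*}
	\psi_p''\pa{\frac{f(t,x)}{f_\infty(x)}} = \pa{\frac{f_\infty(x)}{f(t,x)}}^{2-p},
\end{equation*}
so the task reduces to producing a Gaussian upper bound for $f_\infty/f$. Note that the identity automatically gives the trivial statement $1\leq 1$ when $p=2$, so we may focus on the case $p<2$.

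Second, because the exponential moment condition $\int_{\R^d} e^{\epsilon\abs{x}^2} f_0(x)\,dx < \infty$ is assumed, we apply the quantitative form of Theorem \ref{thm:lower_bound} with the parameter $\eta$ taken equal to the $\eta$ of the corollary. This yields a time $\widehat{t}_1(\eta)$ after which
\begin{equation*}
	f(t,x) \geq C_{\eta,f_0}\, e^{-\pa{\frac{1}{2} + \eta}\abs{x}^2},
\end{equation*}
with the explicit constant $C_{\eta,f_0}$ given by \eqref{eq:estimation_on_C_f_0}. Using the fact that, in the coordinates we are working in (where $\K=\II$), the equilibrium is simply $f_\infty(x) = (2\pi)^{-d/2}e^{-\abs{x}^2/2}$, we immediately obtain
\begin{equation*}
	\frac{f_\infty(x)}{f(t,x)} \leq \frac{1}{(2\pi)^{d/2} C_{\eta,f_0}}\, e^{\eta\abs{x}^2}.
\end{equation*}

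Third, raising this to the power $2-p \in (0,1]$, substituting the explicit formula for $C_{\eta,f_0}$ from \eqref{eq:estimation_on_C_f_0}, and using the elementary inequality $\int_{\R^d}e^{\epsilon\abs{y}^2}f_0(y)\,dy \geq 1$ (which holds since $f_0$ has unit mass and $e^{\epsilon\abs{y}^2}\geq 1$) will reduce the constant to an expression of the stated shape $2^{(\eta+1-d/2)(2-p)} 3^{d(2-p)/2}$ times the integral raised to some power; the time $\widehat{\widehat{t}}_1(\epsilon,\eta)$ is then simply taken to be $\widehat{t}_1(\eta)$ from Theorem \ref{thm:lower_bound}, which blows up as $\eta \to 0$ as announced.

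The main obstacle is the last simplification step: the natural constant coming from Theorem \ref{thm:lower_bound} yields the integral raised to the exponent $\eta(1+\eta)(1+2\eta)(2-p)/(8\epsilon)$, whereas the statement records it to the power $\eta(2-p)$. Reconciling these cleanly may require either restricting $\eta$ to a range where $(1+\eta)(1+2\eta)\leq 8\epsilon$ (absorbing the rest into the prefactor by monotonicity in the integral), or, more generally, enlarging the time threshold to allow application of the lower bound with a smaller parameter $\eta' < \eta$ chosen so that $\eta'(1+\eta')(1+2\eta')/(8\epsilon) \leq \eta$ while preserving a Gaussian exponent of at most $(2-p)\eta\abs{x}^2$. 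This is the sole place where the specific numerics of the explicit constants need careful handling.
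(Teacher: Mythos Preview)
Your proposal is correct and follows essentially the same route as the paper. The ``smaller parameter $\eta'$'' you are reaching for in the last paragraph is precisely what the paper chooses explicitly: it applies Theorem~\ref{thm:lower_bound} with $\eta_1:=\min(1,\eta,\eta\epsilon)$, which guarantees both $\eta_1(1+\eta_1)(1+2\eta_1)/(8\epsilon)\le\eta$ (so the integral exponent is at most $\eta(2-p)$), $\eta_1\le\eta$ (so the Gaussian exponent is at most $(2-p)\eta\abs{x}^2$), and $\eta_1\le 1$ (so $(2+\eta_1)^{d(2-p)/2}\le 3^{d(2-p)/2}$), and then sets $\widehat{\widehat{t}}_1(\epsilon,\eta):=\widehat{t}_1(\eta_1)$.
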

\begin{proof}
	Using Theorem \ref{thm:lower_bound}, we can find $\widehat{t}_1\pa{\eta_1}$, for $\eta_1>0$ to be chosen shortly, such that for any $t\geq \widehat{t}_1\pa{\eta_1}$ we have that \eqref{eq:lower_bound} and \eqref{eq:estimation_on_C_f_0} hold with $\eta$ replaced by $\eta_1$. Since $\psi^{\prime\prime}_p\pa{y}=y^{p-2}$ and $1\leq p\leq 2$ we have that for any $t\geq \widehat{t}_1\pa{\eta_1}$
%	$$\psi^{\prime\prime}_p\pa{y}=y^{p-2}$$
%	we see that for a given $\eta_1>0$ to be chosen shortly
	\begin{equation}\nonumber
		\begin{split}
			\psi_p^{\prime\prime}\pa{\frac{f(t,x)}{f_\infty(x)}}&\leq  2^{\pa{\frac{\eta_1\pa{1+\eta_1}\pa{1+2\eta_1}}{8\epsilon}+1-\frac{d}{2}}(2-p)}\pa{2+\eta_1}^{\frac{d(2-p)}{2}}\\
			& \times \pa{\int_{\R^d}e^{\epsilon \abs{y}^2}f_0(y)dy}^{\frac{\eta_1\pa{1+\eta_1}\pa{1+2\eta_1}(2-p)}{8\epsilon}} e^{\pa{2-p}\eta_1\abs{x}^2}.
		\end{split}
	\end{equation}
%$$\psi_p^{\prime\prime}\pa{\frac{f(t,x)}{f_\infty(x)}}\leq  2^{\pa{\frac{\eta_1\pa{1+\eta_1}\pa{1+2\eta_1}}{8\epsilon}+1-\frac{d}{2}}(2-p)}\pa{2+\eta_1}^{\frac{d(2-p)}{2}}$$
%$$\times \pa{\int_{\R^d}e^{\epsilon \abs{y}^2}f_0(y)dy}^{\frac{\eta_1\pa{1+\eta_1}\pa{1+2\eta_1}(2-p)}{8\epsilon}} e^{\pa{2-p}\eta_1\abs{x}^2}.$$
	For $\eta_1\pa{\epsilon,\eta}=\min\pa{1,\eta,\eta \epsilon}$ we find that\footnote{Here we used the fact that for any unit mass non-negative function $f_0$
		$$\int_{\R^d}e^{\alpha \abs{y}^2}f_0(y)dy \geq \int_{\R^d}f_0(y)dy=1,$$
		for any $\alpha>0$.} the above implies \eqref{eq:lower_bound_psi_p} for $\widehat{\widehat{t}}_1(\epsilon,\eta):=\widehat{t}_1\pa{\eta_1\pa{\eta,\epsilon}}$.
%	$$\psi_p^{\prime\prime}\pa{\frac{f(t,x)}{f_\infty(x)}}\leq 2^{\pa{\eta+1-\frac{d}{2}}(2-p)}3^{\frac{d(2-p)}{2}}\pa{\int_{\R^d}e^{\epsilon \abs{y}^2}f_0(y)dy}^{\eta(2-p)} e^{\pa{2-p}\eta\abs{x}^2},$$
%	which shows \eqref{eq:lower_bound_psi_p}. 
\end{proof}

Much like with the first part of Theorem \ref{thm:contractivity} we turn our attention now to a stronger version of the second part of the theorem:

\begin{lemma}\label{lem:hypocontractivity}
	Let $f(t),g(t)$ be solutions to the Fokker-Planck equation \eqref{eq:fokkerplanck} with diffusion and drift matrices $\D$ and $\C$ which satisfy conditions \eqref{item:cond_semipositive}--\eqref{item:cond_no_invariant_subspace_to_kernel} and with initial data $f_0,g_0$ such that $f_0$ is non-negative. Assume in addition that there exist $\epsilon_1>0$ and $\epsilon_2>0$ such that 
		\begin{equation}\label{eq:condition_on_f_0_hypo}
		\int_{\R^d}e^{\epsilon_1 \abs{x}^2}f_0(x)dx < \infty.
	\end{equation}
	and
		\begin{equation}\label{eq:condition_on_g_0_hypo}
		\int_{\R^d}e^{\epsilon_2 \abs{x}^2}\abs{g_0(x)}dx < \infty,
	\end{equation}
	Then for any $\eta>0$ there exists an explicit time, $t_2(\epsilon_1,\epsilon_2,\eta)>0$, that may become unbounded as $\epsilon_1$, $\epsilon_2$ or $\eta$ go to zero, such that for all $t\geq t_2(\epsilon_1,\epsilon_2,\eta)$ we have that
	\begin{equation}\label{eq:hypocontractivity}
		\begin{split}
				\I^{{\bf{I}}}_p\pa{f(t),g(t)|f_\infty}\leq &2^{\pa{\eta+1-\frac{d}{2}}(2-p)+1+2d}3^{\frac{d(2-p)}{2}} \\
				&\times\pa{\int_{\R^d}e^{\epsilon_1 \abs{x}^2}f_0(x)dx}^{\eta(2-p)}\pa{\int_{\R^d}e^{\epsilon_2\abs{x}^2} \abs{g_0(x)}dx}^2
%			\I^{{\bf{I}}}_{p}\pa{f(t),g(t)|f_\infty} \leq 
%			2^{2d+\frac{\pa{3\epsilon_1+4}\pa{2-p}}{4\epsilon_1}}\pa{3\pi}^{\frac{d(2-p)}{2}}\\
%			\pa{\int_{\R^d}e^{\epsilon_1 \abs{y}^2}f_0(y)dy}^{\frac{3(2-p)}{4\epsilon_1}}\pa{\int_{\R^d}e^{\epsilon_2\abs{y}^2} \abs{g_0(y)}dy}^2
		\end{split}
	\end{equation}
	for any given $1\leq p < 2$. 
\end{lemma}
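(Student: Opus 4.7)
The plan is to mimic, with an extra Gaussian weight, the hypercontractivity argument of Lemma \ref{lem:hypercontractivity}, using the pointwise lower bound on $f(t,x)$ provided by Corollary \ref{cor:lower_bound_psi_p} to control the singular factor $\psi_p''(f/f_\infty)$.

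First I would rewrite the generalised Fisher information in a form amenable to kernel estimates. Since in our coordinates $f_\infty(x)=(2\pi)^{-d/2}e^{-\abs{x}^2/2}$ and $\nabla(g/f_\infty)=(\nabla g+xg)/f_\infty$, one has
\begin{equation}\nonumber
\I_p^{\II}(f(t),g(t)|f_\infty) = (2\pi)^{d/2}\int_{\R^d} \psi_p''\pa{\frac{f(t,x)}{f_\infty(x)}} e^{\abs{x}^2/2} \abs{\nabla g(t,x)+x g(t,x)}^2 dx.
\end{equation}
Applying Corollary \ref{cor:lower_bound_psi_p} (after waiting time $\widehat{\widehat{t}}_1(\epsilon_1,\eta)$) to bound $\psi_p''(f/f_\infty)$ pointwise by an explicit constant times $e^{(2-p)\eta\abs{x}^2}$, and using $\abs{a+b}^2\leq 2\abs{a}^2+2\abs{b}^2$, reduces the task to estimating the weighted Gaussian integral
\begin{equation}\nonumber
\int_{\R^d} e^{(\frac12+(2-p)\eta)\abs{x}^2} \pa{\abs{\nabla g(t,x)}^2 + \abs{x}^2\abs{g(t,x)}^2} dx.
\end{equation}

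Second, I would exploit the explicit kernel representations \eqref{eq:exactsolution}, \eqref{eq:exactsolution_grad} as in the proof of Lemma \ref{lem:hypercontractivity}. For $t$ large enough so that $\norm{e^{-\C t}}$ is small and $\W(t)^{\pm1}$ are close to $\II$ (Lemma \ref{lem:proved_propertires_for_hyper_hypo}), the elementary inequality $\abs{x-e^{-\C t}y}^2\geq (1-\delta)\abs{x}^2 - \frac{1}{\delta}\norm{e^{-\C t}}^2\abs{y}^2$ combined with the Cauchy--Schwarz-type bound
\begin{equation}\nonumber
\abs{g(t,x)} \leq \sup_{y\in\R^d}\rpa{K(t,x,y)\, e^{-\epsilon_2 \abs{y}^2}} \int_{\R^d} e^{\epsilon_2 \abs{y}^2}\abs{g_0(y)}\,dy,
\end{equation}
(and its analogue for $\nabla g$, which picks up an extra factor $\W(t)^{-1}(x-e^{-\C t}y)$) yields pointwise estimates of the form $\abs{g(t,x)},\abs{\nabla g(t,x)}\lesssim (1+\abs{x})e^{-\alpha\abs{x}^2}\int e^{\epsilon_2\abs{y}^2}\abs{g_0(y)}dy$, where $\alpha$ can be brought arbitrarily close to $1/2$ by enlarging $t$. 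Substituting these bounds reduces the problem to the explicit Gaussian integral $\int e^{(\frac12+(2-p)\eta-2\alpha)\abs{x}^2}(1+\abs{x}^2)dx$, which converges as soon as $\alpha>\frac14+\frac{(2-p)\eta}{2}$. Tracking the constants coming from Corollary \ref{cor:lower_bound_psi_p} and Lemma \ref{lem:proved_propertires_for_hyper_hypo} then produces the explicit constant in \eqref{eq:hypocontractivity} and the explicit time $t_2(\epsilon_1,\epsilon_2,\eta)$.

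The main obstacle is quantitative balancing. The Gaussian decay rate $\alpha$ extracted from the heat kernel only approaches $1/2$ as $t\to\infty$, while the weight $e^{(\frac12+(2-p)\eta)\abs{x}^2}$ coming from Corollary \ref{cor:lower_bound_psi_p} already exceeds $e^{\abs{x}^2/2}$ for any positive $\eta$; consequently $t_2$ must be chosen delicately as a function of $\epsilon_1,\epsilon_2,\eta$ through the quantitative estimates of Lemma \ref{lem:proved_propertires_for_hyper_hypo}, and it will necessarily blow up as any of these three parameters tends to zero. The rest of the argument is a careful, but routine, accounting of constants.
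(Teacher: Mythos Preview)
Your strategy is correct and follows the same two-step skeleton as the paper: first invoke Corollary \ref{cor:lower_bound_psi_p} to replace $\psi_p''(f/f_\infty)$ by a Gaussian weight $e^{(2-p)\eta|x|^2}$, then control the remaining integral of $|\nabla g+xg|^2$ against $e^{(\frac12+(2-p)\eta)|x|^2}$ using the explicit kernel formulas \eqref{eq:exactsolution}--\eqref{eq:exactsolution_grad} and Lemma \ref{lem:proved_propertires_for_hyper_hypo}.

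The execution differs from the paper in two places. First, you split $|\nabla g+xg|^2\le 2|\nabla g|^2+2|xg|^2$, whereas the paper keeps $\nabla g+xg$ together and observes the crucial cancellation
\[
\nabla g(t,x)+xg(t,x)=\int_{\R^d} K(t,x,y)\big[(\II-\W(t)^{-1})(x-e^{-\C t}y)+e^{-\C t}y\big]g_0(y)\,dy,
\]
so that the bracket is bounded by $\eta_2|x-e^{-\C t}y|+|e^{-\C t}y|$ with a \emph{small} prefactor $\eta_2$ on the first term. Your splitting loses this smallness: individually $\nabla g$ and $xg$ carry a factor $|x-e^{-\C t}y|$ or $|x|$ with no small coefficient. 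Second, you use an $L^\infty_y$--$L^1_y$ (sup) bound on the kernel, while the paper uses Minkowski's integral inequality to pass the $L^2_x$ norm inside the $y$-integral. Both devices lead to convergent Gaussian integrals once $t$ is large enough, so your argument is sound; but the paper's route yields sharper constants and is what produces precisely the numerical factor $2^{(\eta+1-\frac d2)(2-p)+1+2d}3^{\frac{d(2-p)}{2}}$ in \eqref{eq:hypocontractivity}. Your approach will give an explicit but larger constant, so the claim that ``tracking the constants \ldots\ produces the explicit constant in \eqref{eq:hypocontractivity}'' is optimistic unless you restore the cancellation.
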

\begin{remark}\label{rem:time_shift}
	It is clear to see that if we take $t_2$ given in Lemma \ref{lem:hypocontractivity}  and add a fixed $T_0>0$ to, then \eqref{eq:hypocontractivity} remains true by replacing $f_0$ and $g_0$ with $f\pa{T_0,\cdot}$ (which becomes even strictly positive) and $g\pa{T_0,\cdot}$ respectively. 
\end{remark}

\begin{proof}[Proof of Lemma \ref{lem:hypocontractivity}]
Using Corollary \ref{cor:lower_bound_psi_p} we see that for all $t \geq\widehat{\widehat{t}}_1\pa{\epsilon_1,\eta_1}$, where $\widehat{\widehat{t}}_1\pa{\epsilon_1,\eta_1}$ is given in the corollary for $\eta_1=\eta_1\pa{\eta}$ to be chosen shortly, 
\begin{equation}\nonumber
\begin{gathered}
\I^{{\bf{I}}}_p\pa{f(t),g(t)|f_\infty} =\int_{\R^d} \psi^{\prime\prime}_p\pa{\frac{f(t,x)}{f_\infty(x)}}\abs{\nabla\pa{\frac{g(t,x)}{f_\infty(x)}}}^2f_\infty(x)dx \\
\leq 2^{\pa{\eta_1+1-\frac{d}{2}}(2-p)}3^{\frac{d(2-p)}{2}}\pa{\int_{\R^d}e^{\epsilon_1 \abs{x}^2}f_0(x)dx}^{\eta_1(2-p)}\int_{\R^d}\abs{\nabla\pa{\frac{g(t,x)}{f_\infty(x)}}}^2 \pa{e^{\pa{2-p}\eta_1 \abs{x}^2}f_\infty(x)}dx\\
=2^{\pa{\eta_1+1-\frac{d}{2}}(2-p)}3^{\frac{d(2-p)}{2}}\pa{\int_{\R^d}e^{\epsilon_1 \abs{x}^2}f_0(x)dx}^{\eta_1(2-p)}\int_{\R^d}\abs{\nabla g\amit{\pa{t,x}} + x g(\amit{t,x})}^2 \pa{e^{\pa{2-p}\eta_1 \abs{x}^2}f^{-1}_\infty(x)}dx.
\end{gathered}
\end{equation} 
From this point onwards most of the estimates follow much like the proof of Lemma \ref{lem:hypercontractivity} which can be found in Appendix \ref{app:b}. As such, we will skip a few simple calculations. \\
From \eqref{eq:exactsolution} and \eqref{eq:exactsolution_grad} one can show that for a fixed $\eta_2=\eta_2\pa{\epsilon_2}$, to be chosen shortly, if $t_3\pa{\eta_2}:=\max\pa{\widetilde{t}\pa{\eta_2},\widehat{t}\pa{\eta_2}}$ is chosen such that \eqref{eq:hyperproofIII}, \eqref{eq:hyperproofI}, and \eqref{eq:hyperproofII} hold for $\epsilon=\eta_2$  and for any $t\geq t_3\pa{\eta_2}$, then
\begin{equation}\nonumber
	\begin{gathered}
		\abs{\nabla g(t,x) + xg(t,x)}  \\ \leq\frac{1}{\pa{2\pi\pa{1-\eta_2}}^{\frac{d}{2}}}\int_{\R^d}e^{-\frac{1}{2}\pa{1-\eta_2}\abs{x-e^{-\C t}y}^2}
		\pa{\eta_2\abs{x-e^{-\C t}y}+\abs{e^{-\C t}y}} \abs{g_0(y)}dy.
	\end{gathered}
\end{equation}
Using the following Minkowski integral inequality for $q\geq 1$
\begin{equation}\nonumber %\label{eq:minkowski_integral}
	\begin{split}
		\left(\int_{\R^d}\abs{\int_{\R^d}F(y,x)d\mu_1( y)}^q  d\mu_2(x) \right)^{\frac{1}{q}} 
		\leq \int_{\R^d} \pa{ \int_{\R^d}\abs{F(y,x)}^q d\mu_{2}( x) }^{\frac{1}{q}}d\mu_1(y),
	\end{split}
\end{equation}
and the elementary inequality 
\begin{equation}\nonumber
	\sqrt{\eta}\abs{x-e^{-\C t}y} \leq e^{\frac{\eta}{2}\abs{x-e^{-\C t}y}^2}
\end{equation}
one finds that
$$\int_{\R^d}\abs{\nabla g(\amit{t,x}) + x g(\amit{t,x})}^2 \underbrace{\pa{e^{\pa{2-p}\eta_1 \abs{x}^2}f^{-1}_\infty(x)}dx}_{:=d\mu_2(x)} \leq \frac{2}{\pa{2\pi\pa{1-\eta_2}^2}^{\frac{d}{2}}}$$

$$\times \pa{\int_{\R^d}\pa{\int_{\R^d}\underbrace{e^{-\pa{1-2\eta_2}\abs{x-e^{-\C t}y}^2}}_{:=F(y,x)^2}e^{\pa{\pa{2-p}\eta_1+\frac{1}{2}}\abs{x}^2}
		dx}^{\frac{1}{2}} \underbrace{\sqrt{\eta_2+\abs{e^{-\C t }y}^2}\abs{g_0(y)}dy}_{:=d\mu_1(y)}}^2$$
	$$\leq \frac{2\eta_2}{\pa{1-\eta_2}^d\pa{1-4\eta_2-2(2-p)\eta_1}^{\frac{d}{2}}}\pa{\int_{\R^d}\sqrt{1+\eta_2\abs{y}^2}e^{\frac{\pa{1-2\eta_2}\pa{1+2(2-p)\eta_1}}{1-4\eta_2 -2(2-p)\eta_1 }\frac{\eta_2^2}{2} \abs{y}^2} \abs{g_0(y)}dy}^2 $$
	and as such
\begin{equation}\label{eq:fisher_hypo_proof_I}
\begin{gathered}
\I^{{\bf{I}}}_p\pa{f(t),g(t)|f_\infty}\leq  \frac{2^{\pa{\eta_1+1-\frac{d}{2}}(2-p)+1}3^{\frac{d(2-p)}{2}}\eta_2}{\pa{1-\eta_2}^d\pa{1-4\eta_2-2(2-p)\eta_1}^{\frac{d}{2}}} \\
\times \pa{\int_{\R^d}e^{\epsilon_1 \abs{x}^2}f_0(x)dx}^{\eta_1(2-p)}\pa{\int_{\R^d}e^{\pa{\frac{\pa{1-2\eta_2}\pa{1+2(2-p)\eta_1}}{1-4\eta_2 -2(2-p)\eta_1 }\frac{\eta_2^2}{2} +\frac{\eta_2}{2} }\abs{x}^2} \abs{g_0(x)}dx}^2.
%\\
% \frac{8c\epsilon_1}{\pa{\pa{1-2(4-p)\epsilon_1}\pa{1-\epsilon_1}^2}^{\frac{d}{2}}}\pa{\int_{\R^d}e^{\pa{\frac{\pa{1-2\epsilon_1}\pa{1+2(2-p)\epsilon_1}}{1-2(4-p)\epsilon_1}\frac{\epsilon_1^2}{2} +\epsilon_1}\abs{y}^2} \abs{g_0(y)}dy}^2.
\end{gathered}
\end{equation} 
To complete the proof we need to choose $\eta_1\pa{\epsilon_2}$ and $\eta_2\pa{\epsilon_2}$ such that 
$$\frac{\pa{1-2\eta_2}\pa{1+2(2-p)\eta_1}}{1-4\eta_2 -2(2-p)\eta_1 }\frac{\eta_2^2}{2} +\frac{\eta_2}{2} \leq \epsilon_2.$$
%$$\frac{\pa{1-2\epsilon_1}\pa{1+2(2-p)\epsilon_1}}{1-2(4-p)\epsilon_1}\frac{\epsilon_1^2}{2} +\epsilon_1 \leq \epsilon.$$
As $0 \leq (2-p) \leq 1$ for any $1\leq p \leq 2$, we see that if $\eta_1,\eta_2 \leq \frac{1}{8}$ then
$$1-4\eta_2 -2(2-p)\eta_1 \geq \frac{1}{4}$$
and 
$$\frac{\pa{1-2\eta_2}\pa{1+2(2-p)\eta_1}}{1-4\eta_2 -2(2-p)\eta_1 }\frac{\eta_2^2}{2} +\frac{\eta_2}{2} \leq \pa{\frac{15}{64}+\frac{1}{2}}\eta_2=\frac{47\eta_2}{64}.$$
Thus, choosing $\eta_2:=\min\pa{\frac{1}{8},\epsilon_2}$ and $\eta_1:=\min\pa{\frac{1}{8},\eta}$ we obtain that\footnote{Here, again, we have used the fact that $\int_{\R^d}e^{\epsilon_1 \abs{y}^2}f_0(y)dy \geq 1$, as well as the \amit{estimate}
$$\frac{1}{\pa{1-\eta_2}^2\pa{1-4\eta_2-2(2-p)\eta_1}} \leq \frac{256}{49} < 2^4.$$}
%\footnote{Here again we use the fact that  $$\int_{\R^d}e^{\epsilon \abs{y}^2}f_0(y)dy \geq \int_{\R^d}f_0(y)dy=1.$$}
%using \eqref{eq:estimation_on_c_lower_f_0} we find that
%$$c^{lower, \epsilon}_{\epsilon_1,f_0,p} \leq c^{lower, \epsilon}_{\min\pa{1,\epsilon} \min\pa{1,\eta},f_0,p}$$
%$$\leq 2^{2-p}3^{\frac{d(2-p)}{2}}\pa{2\int_{\R^d}e^{\epsilon \abs{y}^2}f_0(y)dy}^{\frac{\min\pa{1,\eta}\pa{1+\min\pa{1,\eta}}\pa{2+\min\pa{1,\eta}}(2-p)}{8}} \footnote{Here, again, we have used the fact that $\int_{\R^d}e^{\epsilon \abs{y}^2}f_0(y)dy \geq 1$.} $$
%$$\leq 2^{2-p}3^{\frac{d(2-p)}{2}}\pa{2\int_{\R^d}e^{\epsilon \abs{y}^2}f_0(y)dy}^{\frac{3\min\pa{1,\eta}(2-p)}{4}} 
%%\leq 2^{3-p}3^{\frac{d(2-p)}{2}}\pa{\int_{\R^d}e^{\epsilon \abs{y}^2}f_0(y)dy}^\eta.$$
%Plugging the above into \eqref{eq:fisher_hypo_proof_I} and using the fact that $1-\epsilon_1 \geq \frac{11}{12} \geq \frac{1}{\sqrt{2}}$ we get that 
\begin{equation}\nonumber
	\begin{gathered}
		\I^{{\bf{I}}}_p\pa{f(t),g(t)|f_\infty}\leq 2^{\pa{\eta+1-\frac{d}{2}}(2-p)+1+2d}3^{\frac{d(2-p)}{2}} \\
		\times \pa{\int_{\R^d}e^{\epsilon_1 \abs{x}^2}f_0(x)dx}^{\eta(2-p)}\pa{\int_{\R^d}e^{\epsilon_2\abs{x}^2} \abs{g_0(x)}dx}^2\\
	\end{gathered}
\end{equation}
for $t\geq t_2\pa{\epsilon_1,\epsilon_2,\eta}:=\max\pa{\widehat{\widehat{t}}_1\pa{\epsilon_1,\eta_1\pa{\eta}},t_3\pa{\eta_2\pa{\epsilon_2}}}$,
which is the desired result.
\end{proof}

We can now conclude this subsection, and with it this section, with the proof of part \eqref{item:hypo} of Theorem \ref{thm:contractivity}:

\begin{proof}[Proof of part \eqref{item:hypo} of Theorem \ref{thm:contractivity}]
	The case $\PP=\II$ is an immediate consequence of lemmas \ref{lem:hypocontractivity} and \ref{lem:hyper_conditions} with the choices
	$$\epsilon_1:=\frac{p_1-1}{2(2p_1-1)},\qquad \epsilon_2:=\frac{p_2-1}{2(2p_2-1)}.$$
	Since $f$ has a unit mass, we have used \eqref{eq:exp_estimation_by_entropy_with_unit_mass} for it instead of the general \eqref{eq:exp_estimation_by_norm_general_function} which we have used for $g$.\\
	For a general $\PP$, \eqref{eq:hypocontractivity_with_entropy_condition} follows from the fact that for any $p\in \rpa{1,\amit{2}}$
		$$\I^{\PP}_p\pa{f,g|f_\infty} \leq \mathrm{p}_{\mathrm{max}}\I^{\II}_p\pa{f,g|f_\infty},$$
		where $\mathrm{p}_{\mathrm{max}}$ is the largest eigenvalue of $\PP$.\\
\end{proof}
% Lastly, we turn our attention to the improved estimations given in Theorem \ref{thm:contractivity_p=2}:
% 	\begin{proof}[Proof of Theorem \ref{thm:contractivity_p=2}]
%Aasdad
%\end{proof}

\begin{remark}\label{rem:small_improvements}
	Some of the constants in our main theorem for this subsection, Theorem \ref{thm:contractivity}, could be improved when $p_1$ and/or $p_2$ equal to $2$. We will not show this in this work to simplify our overall presentation.
	%This, however, is an additional calculation that adds no new idea to the work and as such we have elected to not include it to not overtax the reader.
\end{remark}
%%%%%%%%%%%%%%%%%%%%%%%%%%%%%%%%%%%%%%%%%%%%%%%%%%%%%%%%%%%%%%%%%%%%%%%%%%%%%%%%%%%%%%%%%%%%%%%%%%%%%%%%%%%%%%%%%

\section{The Convergence to Equilibrium in Fisher Information}\label{sec:convergence}
In this section we will build upon everything that we have done so far and prove a variant of our main theorem, Theorem \ref{thm:main}, for the Fisher information. In the next section we will use this variant to prove Theorem \ref{thm:main}. 
\begin{theorem}\label{thm:main_for_fisher}
Let $\D$ be a positive definite matrix and $\C$ be a positive stable matrix. Define
\begin{equation}\nonumber
	\mu=\min\br{\Re\pa{\lambda}\;|\;\lambda\text{ is an eigenvalue of }\;\C},
\end{equation}
and let $n$ is the maximal defect of the eigenvalues associated to $\mu$. Let $f(t)$ be a solution to \eqref{eq:fokkerplanck} with unit mass initial data $f_0\in L^1_+\pa{\R^d}$ and finite $p-$entropy for some $1<p\leq 2$. In addition, let $\PP$ be a positive definite matrix that satisfies\footnote{See Remark \ref{rem:on_P} for more details on the existence of such matrix $\PP$.} 
$$\C^T {\bf{P}}+{\bf{P}}\C \geq 2\pa{\mu-\nu} {\bf P} $$
for some $0\leq \nu<\mathrm{d}_{\mathrm{min}}(p-1)$, where $\mathrm{d}_{\mathrm{min}}$ is the smallest eigenvalues of $\D$.\\
Then, for any $\epsilon>0$ there exists an explicit $\tau_0(p,\epsilon)\geq 0$ that is independent of $f_0$ and that may become unbounded as $\epsilon$ goes to zero, and an explicit $c_{p,\epsilon}$ that is independent of $f_0$ such that for all $t\geq \tau_0\pa{p,\epsilon}$
\begin{equation}\label{eq:main_for_fisher}
	\begin{gathered}
	I_p^{\PP}\pa{f(t)|f_\infty} \leq c_{p,\epsilon} \mathrm{p}_{\mathrm{max}}\rpa{p(p-1)e_p\pa{f_0|f_\infty}+1}^{\frac{2}{p}\pa{1+\epsilon(2-p)}}\\
	\times\pa{1+t^{2n}} e^{-2\mu t}.
	\end{gathered}
\end{equation}

 If $f_0$ has finite $2-$entropy then the above can be improved in the following way: There exists an explicit $\tau_0> 0$ that is independent of $f_0$ such that for any $t\geq \tau_0$ 
\begin{equation}\label{eq:main_for_fisher_improved}
	\begin{gathered}
		I_2^{\PP}\pa{f(t)|f_\infty} \leq c\, \mathrm{p}_{\mathrm{max}}e_2\pa{f_0|f_\infty}
		\pa{1+t^{2n}} e^{-2\mu t},
	\end{gathered}
\end{equation}
where $c$ is fixed constant. It is important to note that $\tau_0$ can't be zero in general as $I_2^{\PP}\pa{f_0|f_\infty}$ may be infinite.
\end{theorem}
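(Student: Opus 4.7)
The plan is to combine four ingredients: the orthogonal spectral decomposition of $L^2(\R^d, f_\infty^{-1})$ from Theorem~\ref{thm:spectraldecomposition}; the standard and improved Fisher decay estimates of Theorems~\ref{thm:Fisher_decay} and \ref{thm:Fisher_improved_decay}; the interpolation inequality of Theorem~\ref{thm:interpolation_p}; and the regularisation results of Theorem~\ref{thm:contractivity}. Using the last, I first fix an explicit activation time $\tau_1=\tau_1(p,\epsilon)$ beyond which $f(\tau_1)\in L^2(\R^d,f_\infty^{-1})$, together with $I_2^\PP(f(\tau_1)|f_\infty)$ and $\I_1^\PP(f(\tau_1),g(\tau_1)|f_\infty)$ (with $g:=f-f_\infty$) all being finite and bounded by appropriate powers of $[p(p-1)e_p(f_0|f_\infty)+1]$. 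Thanks to $L$-invariance of the spaces $V_m$ and $\int_{\R^d} g\,dx=0$, the orthogonal decomposition
\begin{equation*}
g(t)=g_1(t)+g_2(t),\qquad g_1(t)\in V_1,\quad g_2(t)\perp V_0\oplus V_1,
\end{equation*}
is preserved by the flow, and $(a+b)^T\PP(a+b)\le 2a^T\PP a+2b^T\PP b$ inside the generalised Fisher information gives
\begin{equation*}
I_p^\PP(f(t)|f_\infty)\le 2\I_p^\PP(f(t),g_1(t)|f_\infty)+2\I_p^\PP(f(t),g_2(t)|f_\infty).
\end{equation*}

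For the close term I exploit that $V_1=\operatorname{span}\{-x_i f_\infty\}_{i=1}^d$, so $g_1(t)/f_\infty=-c(t)^T x$ for some $c(t)\in\R^d$ and $\nabla(g_1(t)/f_\infty)=-c(t)$ is constant in $x$. The generalised Fisher information then factorises as
\begin{equation*}
\I_p^\PP(f(t),g_1(t)|f_\infty)=c(t)^T\PP c(t)\int_{\R^d}\psi_p''\!\left(\frac{f(t,x)}{f_\infty(x)}\right)f_\infty(x)\,dx.
\end{equation*}
For $p=2$ the integral equals $1$; for $p<2$ it is finite because the pointwise bound $\psi_p''(f/f_\infty)\le C_\eta e^{(2-p)\eta|x|^2}$ from Corollary~\ref{cor:lower_bound_psi_p} (with $\eta<1/(2(2-p))$) turns it into a convergent Gaussian integral, with $C_\eta$ controlled by $e_p(f_0|f_\infty)$. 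Since $L|_{V_1}$ acts on $c(t)$ as the matrix $-\C^T$, whose maximal Jordan block attached to the spectral gap $\mu$ has size $n+1$, Lemma~\ref{lem:proved_propertires_for_hyper_hypo} applied to $-\C^T$ yields $|c(t)|^2\le C(1+t^{2n})e^{-2\mu(t-\tau_1)}|c(\tau_1)|^2$ with $|c(\tau_1)|^2=\|g_1(\tau_1)\|^2_{L^2(\R^d,f_\infty^{-1})}\le \|g(\tau_1)\|^2_{L^2(\R^d,f_\infty^{-1})}$, producing exactly the $(1+t^{2n})e^{-2\mu t}$ behaviour.

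For the far term, since $g_2(t)\perp V_1$, Theorem~\ref{thm:Fisher_improved_decay} gives
\begin{equation*}
I_2^\PP(g_2(t)|f_\infty)\le I_2^\PP(g_2(\tau_1)|f_\infty)\,e^{-2(\mu-\nu+\mathrm{d}_{\mathrm{min}})(t-\tau_1)},
\end{equation*}
while Theorem~\ref{thm:Fisher_decay} applied with $\psi=\psi_1$ yields
\begin{equation*}
\I_1^\PP(f(t),g_2(t)|f_\infty)\le \I_1^\PP(f(\tau_1),g_2(\tau_1)|f_\infty)\,e^{-2(\mu-\nu)(t-\tau_1)}.
\end{equation*}
For $p\in(1,2)$, Theorem~\ref{thm:interpolation_p} interpolates these into
\begin{equation*}
\I_p^\PP(f(t),g_2(t)|f_\infty)\le C\,e^{-2[(2-p)(\mu-\nu)+(p-1)(\mu-\nu+\mathrm{d}_{\mathrm{min}})](t-\tau_1)}=C\,e^{-2[\mu+(p-1)\mathrm{d}_{\mathrm{min}}-\nu](t-\tau_1)},
\end{equation*}
whose rate strictly exceeds $2\mu$ by the hypothesis $\nu<(p-1)\mathrm{d}_{\mathrm{min}}$, so this contribution is absorbed into $(1+t^{2n})e^{-2\mu t}$; the case $p=2$ needs only the first display.

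Summing the two contributions and inserting the initial-time bounds from Theorem~\ref{thm:contractivity} together with Lemma~\ref{lem:hyper_conditions} yields \eqref{eq:main_for_fisher}. The sharper \eqref{eq:main_for_fisher_improved} for $p=2$ follows because neither the pointwise lower bound on $f$ nor the interpolation are required, and one has the clean identity $\|g(\tau_1)\|^2_{L^2(\R^d,f_\infty^{-1})}=2e_2(f(\tau_1)|f_\infty)\le 2e_2(f_0|f_\infty)$ by entropy monotonicity along the flow. The principal technical obstacle is the regime $1<p<2$: the singularity of $\psi_p''$ at $0$ forces reliance on the pointwise lower bound on $f(t)$ from Theorem~\ref{thm:lower_bound}, and the chain of exponential-moment estimates that transfers $p$-entropy information through Lemma~\ref{lem:hyper_conditions} and Corollary~\ref{cor:lower_bound_psi_p} is precisely what produces the unavoidable factor $\epsilon(2-p)$ in the exponent of the initial-data multiplier in \eqref{eq:main_for_fisher}.
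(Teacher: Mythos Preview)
Your proposal is correct and follows essentially the same strategy as the paper: the decomposition $g=f-f_\infty=g_1+g_2$ is identical (up to adding the harmless constant $f_\infty$) to the paper's $f=f_1+f_2$, and the treatment of the close and far parts via Lemma~\ref{lem:LonV1}, Corollary~\ref{cor:lower_bound_psi_p}, Theorems~\ref{thm:Fisher_decay}--\ref{thm:Fisher_improved_decay}, and the interpolation of Theorem~\ref{thm:interpolation_p} mirrors the paper's Steps~1--4. Two minor points: the matrix representation of $L|_{V_1}$ is $-\C$ rather than $-\C^T$ (harmless, since the spectra coincide), and you should note explicitly that the activation-time bounds on $I_2^\PP(g_2(\tau_1)|f_\infty)$ and $\I_1^\PP(f(\tau_1),g_2(\tau_1)|f_\infty)$ follow from those on the full $g$ via the same quadratic splitting $(a-b)^T\PP(a-b)\le 2a^T\PP a+2b^T\PP b$ combined with the finite close-part contribution.
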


%\begin{remark}\label{rem:improved_fisher_decay_for_any_t_0}
%	Following the same techniques we will use momentarily, one can replace \eqref{eq:main_for_fisher_improved} with an estimate of the form 
%	\begin{equation}\nonumber
%		\begin{gathered}
%			I_2^{\PP}\pa{f(t)|f_\infty} \leq c\pa{\tau_0} \mathrm{p}_{\mathrm{max}}e_2\pa{f_0|f_\infty}
%			\pa{1+t^{2n}} e^{-2\mu t}
%		\end{gathered}
%	\end{equation}
% which will hold for all $t\geq \tau_0$ where $\tau_0>0$ is chosen arbitrary. The constant $c\pa{\tau_0}$ is, however, quite complicated to quantify and can blow up as $t_0$ goes to zero. For the sake of simplicity we have elected to avoid stating and proving this result explicitly. 
%\end{remark}

The proof of Theorem \ref{thm:main_for_fisher} is quite involved, but we have set up the roadmap to it in points \eqref{item:description_of_proof_I}-\eqref{item:description_of_proof_IV} in \S\ref{subsec:main_results}. The main steps we will employ to prove it are:\\
\textbf{Step 1:} Due to our hypercontractivity result (part \eqref{item:hyper} of Theorem \ref{thm:contractivity}) we know that after enough time has passed our solution $f(t)$ is in $L^2\pa{\R^d,f_\infty^{-1}}$.
 This allows us to use the spectral properties of the Fokker-Planck operator $L$, and in particular the improved decay away from the first eigenspace $V_1$ (as expressed in Theorem \ref{thm:Fisher_improved_decay}). This motivates us to \textit{decompose} our solution to its projection on $V_1$, $f_1(t)$, and its orthogonal part, $f_2(t)$. It is worth to note that when $p=2$ we have that $f(t)\in L^2\pa{\R^d,f_\infty^{-1}}$ for all $t\geq 0$.\\
\textbf{Step 2:} As $V_1$ is finite dimensional we will be able to investigate the long time behaviour of $\I^{\PP}_p\pa{f(t),f_1(t)|f_\infty}$ for any $p\in [1,2]$ explicitly. \\
%, which will give us the sharp rate of convergence claimed in Theorem \ref{thm:main_for_fisher}.\\
\textbf{Step 3:} Using our upper-, and lower-contractivity results, parts \eqref{item:hyper} and \eqref{item:hypo} of Theorem \ref{thm:contractivity}, together with the interpolation inequality of Theorem \ref{thm:interpolation_p} and the long time behaviours of the (generalised) Fisher information described in Theorems \ref{thm:Fisher_decay} and \ref{thm:Fisher_improved_decay} we will be able to show that $\I^{\PP}_p\pa{f(t),f_2(t)|f_\infty}$ decays significantly faster than $\I^{\PP}_p\pa{f(t),f_1(t)|f_\infty}$ when one chooses $\PP$ in a suitable way, reminiscent of the choice made in \cite{AE}.\\
\textbf{Step 4:} Combining the above with the inequality 
\begin{equation}\label{eq:Fisher_bound_by_generalised_fisher}
		I^{\PP}_p\pa{f|f_\infty}=\I_p^{\PP}\pa{f,f_1+f_2|f_\infty} \leq 2\pa{\I_p^{\PP}\pa{f,f_1|f_\infty}+\I_p^{\PP}\pa{f,f_2|f_\infty}}
\end{equation}
will give the desired result. \\
We shall now go through these steps rigorously.
\subsection{Step 1: Decomposition.}\label{subsec:decomp} We remind the reader that the mutually orthogonal spaces $\br{V_m}_{m\in\N\cup\br{0}}\subset L^2\pa{\R^d,f_\infty^{-1}}$ are defined as 
\begin{equation}\nonumber
	V_m:=\span\br{\partial_{x_1}^{\alpha_1}\dots \partial_{x_d}^{\alpha_d}f_\infty(x)\ \Big|\ \alpha_1,\dots,\alpha_d\in \N\cup\br{0}, \sum_{i=1}^d \alpha_i=m},
\end{equation}
which in our case where $f_\infty(x)$ is the standard Gaussian results in
\begin{equation}\nonumber
	V_m=\span\br{x_1^{\alpha_1}\dots x_d^{\alpha_d}f_\infty(x)\ \Big|\ \alpha_1,\dots,\alpha_d\in \N\cup\br{0}, \sum_{i=1}^d \alpha_i=m}.
\end{equation}
\amit{Since} for any $i,j=1,\dots,d$ we have that 
$$\int_{\R^d}\pa{x_i f_\infty(x)}\pa{x_j f_\infty(x)} f_\infty(x)^{-1}dx=\frac{1}{\pa{2\pi}^{\frac{d}{2}}}\int_{\R^d}x_i x_je^{-\frac{\abs{x}^2}{2}}dx=\delta_{i,j},$$
we conclude that 
\begin{equation}\label{eq:def_of_xi}
	\xi_i(x):=x_i f_\infty(x),\qquad i=1,\dots, d
\end{equation}
is an orthonormal basis for $V_1$. Thus, any $f\in L^2\pa{\R^d,f_\infty^{-1}}$ can be written as
\begin{equation}\label{eq:def_of_f_1_ad_f_2}
	f=P_{V_1}f+\pa{I-P_{V_1}f}=:f_1+f_2
\end{equation}
where $P_{V_1}$ is the orthogonal projection on  $V_1$ which is given by
$$P_{V_1}f(x):=\sum_{i=1}^d \inner{f,\xi_i}_{L^2\pa{\R^d,f_\infty^{-1}}}\xi_i(x).$$
%It is worth to note due to the high regularity of $\br{\xi_i}_{i=1,\dots, d}$ both $f_1$ and $f_2$ are well defined - \textit{though do not necessarily have a definite sign}. Moreover, due to our hypercontractivity result, Lemma \ref{lem:hypercontractivity}, estimations \eqref{eq:exp_estimation_by_entropy_with_unit_mass} provided in Remark \ref{rem:hyper_conditions}, and the invariance of $\br{V_m}_{m\in \N \cup\br{0}}$ for the flow of the equation (expressed in Theorem  \ref{thm:spectraldecomposition}), we have that
%\begin{theorem}\label{thm:invariant_decomp}
%Let $f(t)$ be the solution to the Fokker-Planck equation \eqref{eq:fokkerplanck} with initial datum $f_0$, under conditions \eqref{item:cond_semipositive}-\eqref{item:cond_no_invariant_subspace_to_kernel}. Assume in addition that $e_p\pa{f_0|f_\infty}<\infty$ for some $1<p\leq 2$. Then  there exists an explicit time $t_0$ such that for any $t\geq t_0$ one has that $f(t)\in L^2\pa{\R^d,f_\infty^{-1}}$ and 
%$$f_1(t)\in V_1,\quad\quad f_2(t)=f(t)-f_1(t)\perp V_1.$$
%Moreover
%\begin{equation}\label{eq:norm_estimates_on_f_1_and_f_2}
%	\begin{gathered}
%		\max\pa{\norm{f_1(t)}_{L^2\pa{\R^d,f^{-1}_{\infty}}},\norm{f_2(t)}_{L^2\pa{\R^d,f^{-1}_{\infty}}}} \leq \norm{f(t)}_{L^2\pa{\R^d,f^{-1}_{\infty}}}\\
%		\leq \pa{\frac{2p-1}{p-1}}^{\frac{d(p-1)}{2p}}\pa{p(p-1)e_{p}\pa{f_0|f_\infty}+1}^\frac{1}{p}.
%	\end{gathered}
%\end{equation}
%\end{theorem}
%With this decomposition at hand we can start considering the generalised Fisher information on $f$ and $f_1$ and $f$ and $f_2$. 
\subsection{Step 2: The Behaviour of the Generalised Fisher Information on the ``Spectrally Close'' Part of the Decomposition.}\label{subsec:behaviour_on_V_1} In this step we will prove the following:
\begin{theorem}\label{thm:fisher_decay_on_V_1}
	Consider the Fokker-Planck equation \eqref{eq:fokkerplanck} whose diffusion and drift matrices satisfy conditions \eqref{item:cond_semipositive}--\eqref{item:cond_no_invariant_subspace_to_kernel} and let $f(t)$ be the solution to the equation with a unit mass initial datum $f_0\in L^1_{+}\pa{\R^d}$. Assume in addition that $e_p\pa{f_0|f_\infty}<\infty$ for some $1<p\leq 2$. Then for any positive definite matrix $\PP$, any $1\leq \widetilde{p} \leq 2$, and any $\epsilon>0$ there exists an explicit time $\tau_1(p,\widetilde{p},\epsilon)\geq 0$, that may become unbounded as $\epsilon$ goes to zero, such that for any $t\geq \tau_1(p,\widetilde{p},\epsilon)$ 
	%we have that $\I_{p_1}^{\PP}\pa{f(t),f_1(t)|f_\infty}<\infty$ and
%	\begin{equation}\label{eq:fisher_decay_on_V_1}
%		\I_p^{\PP}\pa{f,f_1(t)|f_\infty} \leq \begin{cases}
%			c {\bf{p}}_{\mathrm{max}}\rpa{p(p-1)e_p\pa{f_0|f_\infty}+1}^{\frac{2}{p}}\pa{1+(t-t_0)^{2n}} e^{-2\mu (t-t_0)} & 1<p<2,\\
%			c_1 {\bf{p}}_{\mathrm{max}}\norm{f_0}_{L^2\pa{\R^d,f_\infty^{-1}}}\pa{1+\pa{t-t_0}^{2n}} e^{-2\mu (t-t_0)} & p=2.
%		\end{cases}
%	\end{equation}
	\begin{equation}\label{eq:fisher_decay_on_V_1}
		\begin{gathered}
	\I_{\widetilde{p}}^{\PP}\pa{f(t),f_1(t)|f_\infty} \leq 
	c_{p,\widetilde{p},\epsilon}	\mathrm{p}_{\mathrm{max}} \\
	\times \rpa{p(p-1)e_p\pa{f_0|f_\infty}+1}^{\frac{2}{p} \pa{1+\epsilon(2-\widetilde{p})}}\pa{1+(t-\tau_1)^{2n}} e^{-2\mu (t-\tau_1)}, 
		\end{gathered}
\end{equation}
where $c_{p,\widetilde{p},,\epsilon}$ is an explicit constant, depending only on $p$, $\widetilde{p}$, $\epsilon$, and $\C$. 
%If $e_2\pa{f_0|f_\infty}<\infty$ then the above can be improved to 
%	\begin{equation}\label{eq:fisher_decay_on_V_1_f_in_L_2}
%	\begin{gathered}
%		\I_{\widetilde{p}}^{\PP}\pa{f(t),f_1(t)|f_\infty} \leq 
%		c_{2,\widetilde{p}} \mathrm{p}_{\mathrm{max}}\\
%	\times	e_2\pa{f_0|f_\infty}\pa{2	e_2\pa{f_0|f_\infty}+1}^{\epsilon(2-\widetilde{p})}\pa{1+\pa{t-t_0}^{2n}} e^{-2\mu (t-t_0)},
%	\end{gathered}
%\end{equation}
%for a similarly defined constant $c_{2,p_1}$ and any $t\geq t_0>0$. \\%Moreover, $t_0$ can be chosen to be zero.\\
In particular, choosing $\widetilde{p}=p$ in \eqref{eq:fisher_decay_on_V_1} when $1<p\leq 2$ we conclude that for all $t\geq \tau_1\pa{p,\epsilon}$ we have that
	\begin{equation}\label{eq:fisher_decay_on_V_1_with_I_p}
	\begin{gathered}
		\I_{p}^{\PP}\pa{f(t),f_1(t)|f_\infty} \leq 
		c_{p,\epsilon} 	\mathrm{p}_{\mathrm{max}}\\
\times		\rpa{p(p-1)e_p\pa{f_0|f_\infty}+1}^{\frac{2}{p}\pa{1+\epsilon(2-p)}}\pa{1+(t-\tau_1)^{2n}} e^{-2\mu (t-\tau_1)}.
	\end{gathered}
\end{equation}
%and if $e_2\pa{f_0|f_\infty}<\infty$
%	\begin{equation}\label{eq:fisher_decay_on_V_1_with_I_p_f_in_L_2}
%	\begin{gathered}
%		\I_{p}^{\PP}\pa{f(t),f_1(t)|f_\infty} \leq 
%		c_{2,p} \mathrm{p}_{\mathrm{max}}\\
%	\times	e_2\pa{f_0|f_\infty}\pa{2e_2\pa{f_0|f_\infty}+1}^{\epsilon(2-p)}
%		\pa{1+(t-t_0)^{2n}} e^{-2\mu (t-t_0)} 
%	\end{gathered}
%\end{equation}
%for explicit constant $c_p$ and $c_{2,p}$
\end{theorem}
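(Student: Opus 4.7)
The plan is to reduce the generalised Fisher information against $f_1(t)$ to the decay of a finite-dimensional ODE, then use the hypercontractivity of Theorem \ref{thm:contractivity} together with the pointwise lower bound of Corollary \ref{cor:lower_bound_psi_p} to control the resulting prefactors. First I would invoke part \eqref{item:hyper} of Theorem \ref{thm:contractivity} to fix a time $t_0 := t_1(p) \geq 0$ after which $f(t) \in L^2\pa{\R^d, f_\infty^{-1}}$ with the explicit bound
$$\|f(t_0)\|^2_{L^2\pa{\R^d,f_\infty^{-1}}} \leq \mathcal{A}(p)\rpa{p(p-1)e_p\pa{f_0|f_\infty}+1}^{2/p}.$$
Using the orthonormal basis $\xi_i(x) = x_i f_\infty(x)$ of $V_1$ introduced in \eqref{eq:def_of_xi}, I would then write $f_1(t,x) = a(t)^T x\, f_\infty(x)$ with $a_i(t) = \inner{f(t),\xi_i}_{L^2\pa{\R^d,f_\infty^{-1}}}$, so by Parseval $|a(t_0)|^2 \leq \|f(t_0)\|_{L^2\pa{\R^d,f_\infty^{-1}}}^2$.

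The key observation is that $\nabla\pa{f_1/f_\infty} = a(t)$ is spatially constant, which collapses the generalised Fisher information to
\begin{equation*}
\I_{\widetilde{p}}^{\PP}\pa{f(t),f_1(t)|f_\infty} = a(t)^T \PP\, a(t) \int_{\R^d}\psi_{\widetilde{p}}''\pa{\tfrac{f(t,x)}{f_\infty(x)}} f_\infty(x)\,dx \leq \mathrm{p}_{\max}|a(t)|^2 \int_{\R^d} \psi_{\widetilde{p}}''\pa{\tfrac{f(t,x)}{f_\infty(x)}} f_\infty(x)\,dx.
\end{equation*}
A short direct computation using $\D = \C_s$ together with the $L$-invariance of $V_1$ (Theorem \ref{thm:spectraldecomposition}) shows that $L\pa{a^T x f_\infty} = -\pa{\C a}^T x\, f_\infty$, so that $\dot a(t) = -\C a(t)$ and consequently $a(t) = e^{-\C(t-t_0)} a(t_0)$. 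Lemma \ref{lem:proved_propertires_for_hyper_hypo} then yields $|a(t)|^2 \leq 2\widetilde c^2\pa{1+(t-t_0)^{2n}}e^{-2\mu(t-t_0)}|a(t_0)|^2$, which, combined with the hypercontractive estimate for $|a(t_0)|^2$, produces the desired $\rpa{p(p-1)e_p\pa{f_0|f_\infty}+1}^{2/p}\pa{1+(t-t_0)^{2n}}e^{-2\mu(t-t_0)}$ factor.

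It remains to bound the scalar integral $\int_{\R^d} \psi_{\widetilde{p}}''\pa{f(t,x)/f_\infty(x)} f_\infty(x)\,dx$, which is the main technical step. For $\widetilde{p} = 2$ the integral is identically $1$. For $\widetilde{p} \in [1,2)$ the singularity of $\psi_{\widetilde{p}}''$ at the origin is exactly where the pointwise lower bound of Corollary \ref{cor:lower_bound_psi_p} becomes indispensable: applied with a small parameter $\eta > 0$ and with $\epsilon_\star := \tfrac{p-1}{2(2p-1)}$, and noting that the residual Gaussian $\int_{\R^d} e^{(2-\widetilde{p})\eta|x|^2} f_\infty(x)\,dx$ is finite as soon as $(2-\widetilde{p})\eta < 1/2$, it yields a bound of the form
\begin{equation*}
\int_{\R^d}\psi_{\widetilde{p}}''\pa{\tfrac{f(t,x)}{f_\infty(x)}} f_\infty(x)\,dx \leq K(\eta,\widetilde p,d)\pa{\int_{\R^d} e^{\epsilon_\star|y|^2} f_0(y)\,dy}^{\eta(2-\widetilde p)}
\end{equation*}
valid for $t \geq \widehat{\widehat{t}}_1(\epsilon_\star,\eta)$. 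Applying the unit-mass exponential moment estimate \eqref{eq:exp_estimation_by_entropy_with_unit_mass} of Lemma \ref{lem:hyper_conditions} to $f_0$ converts the right-hand side into a power of $\rpa{p(p-1)e_p\pa{f_0|f_\infty}+1}^{1/p}$, so that the total entropic exponent becomes $\tfrac{2}{p} + \tfrac{\eta(2-\widetilde p)}{p}$. Choosing $\eta := 2\epsilon$ matches this to $\tfrac{2}{p}\pa{1+\epsilon(2-\widetilde p)}$ on the nose, and setting $\tau_1(p,\widetilde p,\epsilon) := \max\br{t_1(p),\widehat{\widehat{t}}_1(\epsilon_\star, 2\epsilon)}$ finishes the argument; the specialisation $\widetilde p = p$ is then immediate. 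The principal obstacle is the handling of the blow-up of $\psi_{\widetilde p}''$ near $0$ for $\widetilde p < 2$, which is why the pointwise lower bound on the Fokker-Planck flow developed in the previous section is indispensable here.
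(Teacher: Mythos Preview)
Your proposal is correct and follows essentially the same route as the paper: hypercontractivity to enter $L^2\pa{\R^d,f_\infty^{-1}}$, the observation that $\nabla\pa{f_1/f_\infty}=a(t)$ is spatially constant with $\dot a=-\C a$ (this is the content of Lemma~\ref{lem:LonV1}), and then Corollary~\ref{cor:lower_bound_psi_p} together with Lemma~\ref{lem:hyper_conditions} to control the scalar integral $\int \psi_{\widetilde p}''\pa{f/f_\infty}f_\infty\,dx$. The only cosmetic discrepancy is that the paper sets $\eta=\min\!\big(2\epsilon,\tfrac{1}{4(2-\widetilde p)}\big)$ rather than your $\eta=2\epsilon$, precisely to guarantee the residual Gaussian integral is finite for \emph{every} $\epsilon>0$; you noted the constraint $(2-\widetilde p)\eta<1/2$ but did not build it into the final choice, though monotonicity of the bound in $\epsilon$ makes this harmless.
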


The proof of this theorem relies heavily on the next lemma:
\begin{lemma}\label{lem:LonV1}
Consider the space $V_1$ defined in Theorem \ref{thm:spectraldecomposition} and its orthonormal basis $\br{\xi_i}_{i=1,\dots,d}$, given in \eqref{eq:def_of_xi}. Then, the matrix representation of the Fokker-Planck operator $L$ with respect to this basis is $-\C$.
Moreover, there exists a constant $C>0$, depending only on $\C$, such that if $f(t)\in L^2\pa{\R^d,f_\infty^{-1}}$ for all $t\geq \overline{t_0}$ for some $\overline{t_0}\geq 0$, then
\begin{equation}\label{eq:LonV1decay}
\abs{\nabla\pa{\frac{f_1(t,x)}{f_\infty(x)}}} \leq C \norm{f_1\pa{\overline{t_0}}}_{L^2\pa{\R^d,f_\infty^{-1}}} \pa{1+(t-\overline{t_0})^n} e^{-\mu \pa{t-\overline{t_0}}},\quad \forall t\geq \overline{t_0}.
\end{equation}
\end{lemma}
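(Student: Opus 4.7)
The plan is to carry out both statements via a direct computation that exploits the remarkably simple form of $\xi_i/f_\infty$. First, I would use the recast form of the Fokker-Planck operator from \eqref{eq:FPrecast-gen} (noting $\K = \II$ after the normalisation), which gives
$$L\xi_i(x) = \operatorname{div}\!\pa{f_\infty(x)\,\C\,\nabla\pa{\tfrac{\xi_i(x)}{f_\infty(x)}}}.$$
Since $\xi_i(x)/f_\infty(x) = x_i$, we have $\nabla(\xi_i/f_\infty) = e_i$, so $\C\nabla(\xi_i/f_\infty)$ is the constant vector $\C e_i = \C_{\cdot i}$. Using $\nabla f_\infty = -x f_\infty$, the divergence collapses to
$$L\xi_i(x) = -\,x^T \C e_i\, f_\infty(x) = -\sum_{j=1}^d \C_{ji}\,\xi_j(x),$$
so the matrix representation of $L|_{V_1}$ in the basis $\{\xi_i\}$ is exactly $-\C$, as claimed.

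For the decay estimate, I would leverage the fact that $V_1$ is invariant under the flow (Theorem \ref{thm:spectraldecomposition}) and that $\{\xi_i\}$ is orthonormal in $L^2(\R^d,f_\infty^{-1})$. Writing $f_1(t) = \sum_{i=1}^d c_i(t)\,\xi_i$, the invariance together with the first part of the lemma implies that $c(t) := (c_1(t),\dots,c_d(t))^T$ satisfies the linear ODE $\dot c = -\C c$, hence $c(t) = e^{-\C(t-\overline{t_0})} c(\overline{t_0})$ for all $t \geq \overline{t_0}$. Orthonormality of $\{\xi_i\}$ gives $|c(\overline{t_0})| = \norm{f_1(\overline{t_0})}_{L^2(\R^d,f_\infty^{-1})}$, while $\nabla(f_1(t,x)/f_\infty(x)) = \nabla(c(t)\cdot x) = c(t)$ is independent of $x$.

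Combining these with the explicit matrix-exponential bound $\norm{e^{-\C s}} \leq \widetilde{c}(1+s^n)e^{-\mu s}$ from Lemma \ref{lem:proved_propertires_for_hyper_hypo} yields
$$\abs{\nabla\pa{\tfrac{f_1(t,x)}{f_\infty(x)}}} = |c(t)| \leq \norm{e^{-\C(t-\overline{t_0})}}\,|c(\overline{t_0})| \leq \widetilde{c}\,\norm{f_1(\overline{t_0})}_{L^2(\R^d,f_\infty^{-1})}\pa{1+(t-\overline{t_0})^n}e^{-\mu(t-\overline{t_0})},$$
which is \eqref{eq:LonV1decay} with $C = \widetilde{c}$.

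The argument is essentially a one-line computation once the right form of $L$ is used; there is no genuine obstacle. The only thing worth double-checking is the convention for the matrix representation (whether $L\xi_i = \sum_j M_{ji}\xi_j$ or $\sum_j M_{ij}\xi_j$), but either way the operator norm of $e^{tL|_{V_1}}$ agrees with that of $e^{-\C t}$ since $\C$ and $\C^T$ share the same spectrum and the same Jordan block structure, so the polynomial-times-exponential bound from Lemma \ref{lem:proved_propertires_for_hyper_hypo} applies identically.
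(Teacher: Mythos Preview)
Your proposal is correct and follows essentially the same route as the paper: the recast form \eqref{eq:FPrecast-gen} with $\K=\II$ is applied to $\xi_i/f_\infty=x_i$ to obtain $L\xi_i=-\sum_j\C_{ji}\xi_j$, and then the coefficient vector of $f_1$ is shown to satisfy $\dot c=-\C c$, whence the decay \eqref{eq:LonV1decay} follows from the matrix-exponential bound. Your explicit invocation of Lemma~\ref{lem:proved_propertires_for_hyper_hypo} for the estimate $\norm{e^{-\C s}}\le\widetilde c\,(1+s^n)e^{-\mu s}$ makes the dependence of the constant on $\C$ transparent, which the paper leaves implicit.
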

It is worth to note that the \amit{right} hand side of \eqref{eq:LonV1decay} is independent of $x$. Variants of this lemma can be found in Theorem 3.11 of \cite{AEW18} and Proposition 4.9 of \cite{ASS}. For the sake of completeness we provide the proof here.
\begin{proof}%[Proof of Lemma \ref{lem:LonV1}]
Using \eqref{eq:FPrecast-gen} we see that for any $i\in\br{1,\dots,d}$
\begin{equation}\nonumber %\label{eq:L_is_minus_C_on_V_1}
\begin{gathered}
L \xi_i(x) = \text{div}\pa{f_\infty(x) \C \nabla \pa{x_i}}=\text{div}\pa{f_\infty(x)\C e_i}\\
=\nabla f_\infty(x)^T \C e_i
=-\pa{\xi_1(x),\dots,\xi_d(x)}\C e_i= -\sum_{j=1}^{d} \C_{ji}\xi_{j}(x),
\end{gathered}
\end{equation}
and consequently
\begin{equation}\label{eq:L_is_minus_C_on_V_1}
	L\pa{\sum_{i=1}^d a_i \xi_i(x)}= -\sum_{i,j=1}a_i \C_{ji}\xi_j(x) = \sum_{i=1}^d\pa{\sum_{j=1}^d\pa{-\C}_{ij}a_j}\xi_i(x)
\end{equation}
which shows the first part of the lemma. \\
We now turn our attention to the second part. We start by denoting by $a_i(t):=\inner{f(t),\xi_i}_{L^2\pa{\R^d,f_\infty^{-1}}}$ and $f_1(t,x)=\sum_{i=1}^d a_i(t)\xi_i(x)$.
%To prove the second part we note that
%$$f_1(t,x)=\sum_{i=1}^d \inner{f(t),\xi_i}\xi_i(x)=\sum_{i=1}^d a_i(t) \xi_i(x)$$
%and find the evolution of ${\bf{a}}(t)=\pa{a_1(t),\dots,a_d(t)}^T$.\\
Due to  \eqref{eq:L_is_minus_C_on_V_1} we have that
$$\dot{a_i}(t)=\inner{Lf_1(t),\xi_i}_{L^2\pa{\R^d,f_\infty^{-1}}}=-\sum_{j,k=1}^d a_j(t)\C_{k j}\inner{\xi_k,\xi_i}_{L^2\pa{\R^d,f_\infty^{-1}}}=-\sum_{j=1}^d \C_{ij}a_j(t)\quad \forall t\geq \overline{t_0},$$
which can be rewritten as
\begin{equation}\label{eq:appropriate_ODE}
	\dot{{\bf{a}}}(t)=-\C {\bf{a}}(t),\quad \forall t\geq\overline{t_0},
\end{equation}
where ${\bf{a}}(t):=\pa{a_1(t),\dots,a_d(t)}^T$.\\ 
Since $\abs{{\bf{a}}(t)}=\norm{f_1(t)}_{L^2\pa{\R^d,f^{-1}_{\infty}}}$, \eqref{eq:appropriate_ODE} implies that there exists an explicit constant $C>0$ that depends only on $\C$ such that
%Had $\C$ been in its Jordan form we would have concluded that
%$$\abs{a_i(t)} \leq C_{\C}\sum_{i=1}^d \abs{a_i(t_0)}e^{-\mu (t-t_0)}\sum_{j=0}^{n} \frac{(t-t_0)^j}{j!}, \quad t>0\,,$$
%where 
%$$C_\C = \sup_{t\geq 0} e^{-\pa{\underset{\lambda\text{ is an eigenvalue of }\C\text{ and }\text{Re}\lambda \not=\mu}\min\pa{\text{Re}\lambda-\mu}}}\pa{\sum_{j=0}^{n} \frac{t^j}{j!}}^{-1}.$$\\
%Since $\C$ is not necessarily in its Jordan form, we gain the following estimation
%$$\abs{a_i(t)} \leq S \sum_{i=1}^d \abs{a_i(t_0)}e^{-\mu (t-t_0)}\sum_{j=0}^n \frac{(t-t_0)^j}{j!}$$
%where $S>0$ is a constant that depends only on the matrix $\C$. As all norms are equivalent on a finite dimensional space, we find another constant $N>0$, that is independent in $f_1$, such that
%$$\sum_{i=1}^d \abs{a_i(t_0)} \leq N \norm{f_1(t_0)}_{L^2\pa{\R^d,f_\infty^{-1}}}$$
%and conclude the existence of a constant $C>0$ that depends only on the matrix $\C$ and the norm equivalence constant such that
\begin{equation}\label{eq:f1timeevo}
\abs{{\bf{a}}(t)} \leq C \norm{f_1\pa{\overline{t_0}}}_{L^2\pa{\R^d,f_\infty^{-1}}} \pa{1+\pa{t-\overline{t_0}}^n} e^{-\mu \pa{t-\overline{t_0}}}.
\end{equation}
As
$$\nabla\pa{\frac{f_1(t,x)}{f_\infty(x)}}=\sum_{j=1}^d a_j(t) \nabla\pa{\frac{\xi_j(x)}{f_\infty(x)}}=\sum_{j=1}^d a_j(t) \nabla\pa{x_j}={\bf{a}}(t),$$
the proof is complete.
\end{proof}
%\begin{proof}[Proof of Theorem \ref{thm:fisher_decay_on_V_1}]
%The first part of the theorem follows from Lemma \ref{lem:hypercontractivity} and the fact that
%\begin{equation}\label{eq:equivalence_of_I_and_I_P}
%{\bf{p}}_{\mathrm{min}}I^{{\bf{I}}}_2 \leq I^{\PP}_2 \leq {\bf{p}}_{\mathrm{max}} I^{{\bf{I}}}_2 
%\end{equation}
%with ${\bf{p}}_{\mathrm{min}}$ ${\bf{p}}_{\mathrm{max}}$ being the minimal and maximal eigenvalues of $\PP$ respectively. \\
%%and  was defined in Theorem \ref{thm:Fisher_improved_decay}, we conclude that it is enough to show \eqref{eq:fisher_decay_on_V_1} when $\PP={\bf{I}}$.
%By definition
%\begin{equation}\nonumber
%I^{{\bf{I}}}_2\pa{f_1(t)|f_\infty}=\int_{\R^d}\abs{\nabla\pa{\frac{f_1(t,x)}{f_\infty(x)}}}^2 f_\infty(x)dx,
%\end{equation}
%and using \eqref{eq:LonV1decay} together with the fact that 
%$$\norm{f_1}_{L^2\pa{\R^d,f_\infty^{-1}}}=\norm{P_{V_1}f}_{L^2\pa{\R^d,f_\infty^{-1}}}\leq \norm{f}_{L^2\pa{\R^d,f_\infty^{-1}}}$$
%we find that
%\begin{equation}\label{eq:main_estimation_of_I_2_on_V_1}
%I^{{\bf{I}}}_2\pa{f_1(t)|f_\infty} \leq C^2 \norm{f(t_0)}_{L^2\pa{\R^d,f_\infty^{-1}}}^2 \pa{1+(t-t_0)^{2n}} e^{-2\mu (t-t_0)}.
%\end{equation}
%The result follows from \eqref{eq:equivalence_of_I_and_I_P}, \eqref{eq:main_estimation_of_I_2_on_V_1} and, in the case of $p\not=2$, Remark \ref{rem:hyper_conditions} (in particular \eqref{eq:hypercontractivity_entropy_with_e_p}).
%\end{proof}

\begin{proof}[Proof of Theorem \ref{thm:fisher_decay_on_V_1}] 
	Due to part \eqref{item:hyper} of Theorem \ref{thm:contractivity} and Remark \ref{rem:also_works_for_f} we know that for any $p\in (1,2]$ we can find $t_1\pa{p}$ such that if $t\geq t_1\pa{p}$ then $f(t)\in L^2\pa{\R^d,f_\infty^{-1}}$ and 
	$$\norm{f(t)}_{L^2\pa{\R^d,f_\infty^{-1}}} \leq \mathcal{A}(p) \rpa{p\pa{p-1}e_{p}\pa{f_0|f_\infty}+1}^{\frac{2}{p}}.$$
	Next we will show that the projection $P_{V_1}$ is well defined for any function $h\in L_+^1\pa{\R^d}$ such that $e_{p}\pa{h|f_\infty}<\infty$ for some $p\in (1,2]$. Consequently, since the fact that  $e_p\pa{f_0|f_\infty}<\infty$ implies that the solution to the Fokker-Planck equation with initial datum $f_0$, $f(t)$, satisfies $e_p\pa{f(t)|f_\infty}<\infty$ (see Theorem \ref{thm:anton_erb}), we will conclude that $f_1(t)\in L^2\pa{\R^d,f_\infty^{-1}}$ for any $t\geq 0$. Indeed, due to Lemma \ref{lem:hyper_conditions} a function in $h\in L^1_+\pa{\R^d}$ with finite $p-$entropy where $p\in (1,2]$ must have an exponential moment. This implies that 
		$$\abs{\inner{h,\xi_i}_{L^2\pa{\R^d,f_\infty^{-1}}}}= \abs{\int_{\R^d}x_i h(x)dx }< \infty,$$
	which is the desired result.\\
	We have thus shown that $f(t)\in L^2\pa{\R^d,f_\infty^{-1}}$ for any $t\geq t_1\pa{p}$ and $f_1(t)\in L^2\pa{\R^d,f_\infty^{-1}}$ for any $t\geq 0$\footnote{We will, in fact, only use $t\geq t_1\pa{p}$ but wanted to indicate here that the inclusion of $f_1(t)$ in $L^2\pa{\R^d,f_\infty^{-1}}$ is instantaneous.} and consequently $f_2(t)\in L^2\pa{\R^d,f_\infty^{-1}}$ for any $t\geq t_1\pa{p}$.
	Moreover, since $f_1(t)\perp f_2(t)$ we see that for any $t\geq t_1(p)$
	\begin{equation}\label{eq:L_2_for_f_1_proof}
		\norm{f_1(t)}_{L^2\pa{\R^d,f_\infty^{-1}}} \leq \norm{f(t)}_{L^2\pa{\R^d,f_\infty^{-1}}} \leq \mathcal{A}(p) \rpa{p\pa{p-1}e_{p}\pa{f_0|f_\infty}+1}^{\frac{2}{p}}.
	\end{equation}
	Denoting by
	$$\eta\pa{\epsilon,\widetilde{p}}: = \min\pa{2\epsilon,\frac{1}{4(2-\widetilde{p})}}$$
	and utilising  Lemma \ref{lem:hyper_conditions}, Corollary \ref{cor:lower_bound_psi_p} with the choice $\epsilon_p=\frac{p-1}{2\pa{2p-1}}$, Lemma \ref{lem:LonV1} with $\overline{t_0}=t_1(p)$, and \eqref{eq:L_2_for_f_1_proof} we see that for any $t\geq \tau_1\pa{p,\widetilde{p},\epsilon}:=\max\pa{\widehat{\widehat{t}}\pa{\epsilon_p,\eta\pa{\epsilon,\widetilde{p}}},t_1(p)}$
	 \begin{equation}\nonumber
		\begin{gathered}
			\I^{\PP}_{\widetilde{p}}\pa{f(t),f_1(t)}f_\infty) \leq \mathrm{p}_{\mathrm{max}}\I^{{\bf{I}}}_{\widetilde{p}}\pa{f(t),f_1(t)|f_\infty}\\
			=\mathrm{p}_{\mathrm{max}}\int_{\R^d}\psi_{\widetilde{p}}^{\prime\prime}\pa{\frac{f(t,x)}{f_\infty(x)}}\abs{\nabla\pa{\frac{f_1(t,x)}{f_\infty(x)}}}^2 f_\infty(x)dx\\
			\leq \mathrm{p}_{\mathrm{max}}\widetilde{c}_{p,\widetilde{p},\epsilon}\norm{f_1(\tau_1)}_{L^2\pa{\R^d,f^{-1}_\infty}}^2 \rpa{p\pa{p-1}e_p\pa{f_0|f_\infty}+1}^{\frac{\eta\pa{2-\widetilde{p}}}{p}}\\
			\times \pa{\int_{\R^d}e^{\pa{2-\widetilde{p}}\eta\abs{x}^2}e^{-\frac{\abs{x}^2}{2}}dx}\pa{1+(t-\tau_1)^{2n}} e^{-2\mu (t-\tau_1)}\\
			\leq \mathrm{p}_{\mathrm{max}}c_{p,\widetilde{p},\epsilon}\rpa{p\pa{p-1}e_p\pa{f_0|f_\infty}+1}^{\frac{2}{p}\pa{1+\frac{\eta\pa{2-\widetilde{p}}}{2}}}\pa{1+(t-\tau_1)^{2n}} e^{-2\mu (t-\tau_1)},
		\end{gathered}
	\end{equation}	
	with some geometric constants $\widetilde{c}_{p,\widetilde{p},\epsilon}$ and $c_{p,\widetilde{p},\epsilon}$. Since $\eta \leq 2\epsilon$ the proof of \eqref{eq:fisher_decay_on_V_1} is now complete.
\end{proof}

\subsection{Step 3: The Behaviour of the Generalised Fisher Information on the ``Spectrally Far'' Part of the Decomposition.}\label{subsec:behaviour_on_V_1_perp} In this step we will prove the following:
\begin{theorem}\label{thm:fisher_decay_on_V_1_perp}
	Consider the Fokker-Planck equation \eqref{eq:fokkerplanck} with a positive definite diffusion matrix, $\D$, and a positive stable matrix, $\C$, such that $\D=\C_s$.  Let $f(t)$ be a solution to this equation with a unit mass initial datum $f_0\in L^1_{+}\pa{\R^d}$ that has a finite $p-$entropy for some $1<p\leq 2$. Let $\PP$ be a positive definite matrix such that
	\begin{equation}\nonumber%\label{eq:CPcondition}
		\C^T {\bf{P}}+{\bf{P}}\C \geq 2\lambda {\bf P},
	\end{equation}
	for some $\lambda>0$. Then for any $1\leq \widetilde{p} \leq 2$ and any $\epsilon>0$ there exists an explicit time $\tau_2(p,\epsilon)\geq 0$, that may become unbounded as $\epsilon$ goes to zero, such that for any $t\geq \tau_2(p,\epsilon)$ we have that $\I_{\widetilde{p}}^{\PP}\pa{f(t),f_2(t)|f_\infty}<\infty$ and
\begin{equation}\label{eq:Fisher_improved_decay_with_e_p_1}
\begin{gathered}
\I^{\bf{P}}_{\widetilde{p}} \pa{f(t),f_2(t)|f_\infty} \leq c_{p,\widetilde{p},\epsilon}\mathrm{p}_{\mathrm{max}}\\
\times \rpa{p(p-1)e_p\pa{f_0|f_\infty}+1}^{\frac{2}{p}\pa{1+\epsilon(2-\widetilde{p})}}e^{-2\pa{\lambda+\mathrm{d}_{\mathrm{min}}(\widetilde{p}-1)}(t-\tau_2)}
 \end{gathered}
\end{equation}
where $c_{p,\widetilde{p},\epsilon}>0$ is a constant that depends only on $p$, $\widetilde{p}$, $\epsilon$, and $\C$. $\mathrm{p}_{\mathrm{max}}$ and $\mathrm{d}_{\mathrm{min}}$ are the largest and smallest eigenvalues of $\PP$ and $\D$ respectively. \\
%If $e_2\pa{f_0|f_\infty}<\infty$ then the above can be improved to 
%\begin{equation}\label{eq:Fisher_improved_decay_with_e_2}
%	\begin{gathered}
%		\I^{\bf{P}}_{p_1} \pa{f(t),f_2(t)|f_\infty} \leq c_{p} \mathrm{p}_{\mathrm{max}}\\
%		\times \pa{2e_2\pa{f_0|f_\infty}+1}^{\epsilon(2-p_1)}e_2\pa{f_0|f_\infty}e^{-2\pa{\lambda+\mathrm{d}_{\mathrm{min}}(p_1-1)}(t-t_0)}
%	\end{gathered}
%\end{equation}
In particular, choosing $\widetilde{p}=p$ when $1<p\leq 2$, we find that 
\begin{equation}\label{eq:Fisher_improved_decay_with_e_p}
	\begin{gathered}
		\I^{\bf{P}}_{p} \pa{f(t),f_2(t)|f_\infty} \leq  \widetilde{c}_{p,\epsilon}  \mathrm{p}_{\mathrm{max}}\\
		\times \rpa{p(p-1)e_p\pa{f_0|f_\infty}+1}^{\frac{2}{p}\pa{1+\epsilon(2-p)}}e^{-2\pa{\lambda+\mathrm{d}_{\mathrm{min}}(p-1)}(t-\tau_2)}.
	\end{gathered}
\end{equation}
%and with the same choice when $p=2$ we find that
%\begin{equation}\label{eq:Fisher_improved_decay_with_e_p(p=2)}
%	\begin{gathered}
%		\I^{\bf{P}}_{2} \pa{f(t),f_2(t)|f_\infty} \leq  \widetilde{c}  \mathrm{p}_{\mathrm{max}}e_2\pa{f_0|f_\infty}e^{-2\pa{\lambda+\mathrm{d}_{\mathrm{min}}}(t-t_0)}.
%	\end{gathered}
%\end{equation}
%If $f_0\in L^2\pa{\R^d,f_\infty^{-1}}$ inequality \eqref{eq:Fisher_improved_decay_with_e_p} remains valid, with $ \rpa{p(p-1)e_p\pa{f_0|f_\infty}+1}^{\frac{2}{p}}$ replaced by $\norm{f_0}^2_{L^2\pa{\R^d,f_\infty^{-1}}}$. Alternatively, if $I_2^{\PP}(f_0|f_\infty)<\infty$ one can choose $t_0=0$ and replace  $\rpa{p(p-1)e_p\pa{f_0|f_\infty}+1}^{\frac{2}{p}}$ with $\norm{f_0}^2_{L^2\pa{\R^d,f_\infty^{-1}}}+{\bf{p}}_{max}I_2^{\PP}(f_0|f_\infty)$.
\end{theorem}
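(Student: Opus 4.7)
The plan is to exploit the improved decay of $I_2^{\PP}\pa{f_2(t)|f_\infty}$ from Theorem \ref{thm:Fisher_improved_decay}, combine it with the standard decay of $\I_1^{\PP}\pa{f,f_2|f_\infty}$ from Theorem \ref{thm:Fisher_decay}, and interpolate via Theorem \ref{thm:interpolation_p} to handle intermediate $\widetilde p$. First, part \eqref{item:hyper} of Theorem \ref{thm:contractivity} furnishes a time $t_1=t_1(p)$ after which $f(t)\in L^2\pa{\R^d,f_\infty^{-1}}$, so the decomposition $f(t)=f_1(t)+f_2(t)$ of \eqref{eq:def_of_f_1_ad_f_2} is well defined. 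Since $V_1$ is invariant under $L$ and its $L^2\pa{\R^d,f_\infty^{-1}}$-adjoint (Theorem \ref{thm:spectraldecomposition}(iii)), so is $V_1^\perp$; by uniqueness of the orthogonal decomposition, both $f_1(t)$ and $f_2(t)$ are themselves Fokker--Planck solutions for $t\geq t_1$, and crucially $f_2(t)\perp V_1$ for every such $t$. This structural fact is what unlocks Theorem \ref{thm:Fisher_improved_decay} for $g=f_2$.

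For the endpoints $\widetilde p\in\br{1,2}$ the argument is direct. Since $\I_2^{\PP}\pa{f,f_2|f_\infty}=I_2^{\PP}\pa{f_2|f_\infty}$, Theorem \ref{thm:Fisher_improved_decay} applied to $f_2$ starting from $t_1$ yields rate $2\pa{\lambda+\mathrm{d}_{\mathrm{min}}}$, which matches $2\pa{\lambda+\mathrm{d}_{\mathrm{min}}(\widetilde p-1)}$ at $\widetilde p=2$. For $\widetilde p=1$, Theorem \ref{thm:Fisher_decay} applied to the pair $(f,f_2)$ gives rate $2\lambda$, again matching the target at $\widetilde p=1$. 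For intermediate $\widetilde p\in(1,2)$, Theorem \ref{thm:interpolation_p} applied to $(f,f_2)$ yields
\begin{equation*}
\I_{\widetilde p}^{\PP}\pa{f,f_2|f_\infty}\leq\frac{\I_1^{\PP}\pa{f,f_2|f_\infty}^{2-\widetilde p}\,I_2^{\PP}\pa{f_2|f_\infty}^{\widetilde p-1}}{(\widetilde p-1)^{\widetilde p-1}(2-\widetilde p)^{2-\widetilde p}},
\end{equation*}
and raising the two individual decays to the powers $2-\widetilde p$ and $\widetilde p-1$ and multiplying produces the combined exponential rate $2\lambda(2-\widetilde p)+2\pa{\lambda+\mathrm{d}_{\mathrm{min}}}(\widetilde p-1)=2\pa{\lambda+\mathrm{d}_{\mathrm{min}}(\widetilde p-1)}$, as required by \eqref{eq:Fisher_improved_decay_with_e_p_1}.

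To pin down the constants in terms of $e_p\pa{f_0|f_\infty}$ I would pick a common starting time $\tau_2\geq t_1$ and bound the two Fisher informations there. For $I_2^{\PP}\pa{f_2(\tau_2)|f_\infty}$ use the Dirichlet-form triangle inequality $I_2^{\PP}\pa{f_2|f_\infty}\leq 2 I_2^{\PP}\pa{f|f_\infty}+2 I_2^{\PP}\pa{f_1|f_\infty}$; the first term is controlled by \eqref{eq:hypercontractivity_with_entropy_condition}, while Lemma \ref{lem:LonV1} shows that $\nabla(f_1/f_\infty)=\mathbf{a}(t)$ is constant in $x$ with $\abs{\mathbf{a}(t)}=\norm{f_1(t)}_{L^2\pa{\R^d,f_\infty^{-1}}}\leq\norm{f(t)}_{L^2\pa{\R^d,f_\infty^{-1}}}$, so $I_2^{\PP}\pa{f_1|f_\infty}\leq\mathrm{p}_{\mathrm{max}}\norm{f(t)}_{L^2\pa{\R^d,f_\infty^{-1}}}^2$ is also bounded via \eqref{eq:hypercontractivity_L_2_with_entropy_condition}. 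For $\I_1^{\PP}\pa{f(\tau_2),f_2(\tau_2)|f_\infty}$ I would invoke the lower-contractivity bound \eqref{eq:hypocontractivity_with_entropy_condition} on the solution pair $(f,f_2)$ (time-shifted to begin at $t_1$). The free parameter $\eta$ appearing there, after being raised to the interpolation power $2-\widetilde p$, becomes the $\epsilon(2-\widetilde p)$ correction in the exponent of \eqref{eq:Fisher_improved_decay_with_e_p_1}; the choice $\eta=\epsilon$ closes the loop, and \eqref{eq:Fisher_improved_decay_with_e_p} is just the specialisation $\widetilde p=p$.

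The main technical obstacle is verifying the hypothesis of the lower-contractivity step, namely that $e_{p_2}\pa{\abs{f_2(t_1)}|f_\infty}$ is finite and quantitatively controlled by $e_p\pa{f_0|f_\infty}$ for some $p_2\in(1,2]$. This I would reduce to $\abs{f_2(t_1)}\leq\abs{f(t_1)}+\abs{f_1(t_1)}$ together with the convexity inequality $(a+b)^{p_2}\leq 2^{p_2-1}(a^{p_2}+b^{p_2})$: Theorem \ref{thm:anton_erb} controls $e_{p_2}\pa{f(t_1)|f_\infty}$, while the explicit form $f_1(t_1,x)=\mathbf{a}(t_1)\cdot x\,f_\infty(x)$ (a linear polynomial times a Gaussian) yields a closed-form bound $e_{p_2}\pa{\abs{f_1(t_1)}|f_\infty}\leq K_{p_2}\abs{\mathbf{a}(t_1)}^{p_2}$ with a purely geometric $K_{p_2}$, and $\abs{\mathbf{a}(t_1)}\leq\norm{f(t_1)}_{L^2\pa{\R^d,f_\infty^{-1}}}$ is controlled by the hypercontractive estimate \eqref{eq:hypercontractivity_L_2_with_entropy_condition}. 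Assembling these estimates and optimising the time parameters $t_1$ and $\tau_2$ in terms of $\epsilon$ yields the explicit constant $c_{p,\widetilde p,\epsilon}$ and the threshold $\tau_2(p,\epsilon)$ announced in the theorem.
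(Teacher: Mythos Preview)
Your proposal is correct and follows the same overall architecture as the paper's proof: hypercontractivity to enter $L^2$, decay of $I_2^{\PP}(f_2|f_\infty)$ via Theorem~\ref{thm:Fisher_improved_decay}, decay of $\I_1^{\PP}(f,f_2|f_\infty)$ via Theorem~\ref{thm:Fisher_decay}, and interpolation through Theorem~\ref{thm:interpolation_p}.

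The only differences are in how the seed values at time $\tau_2$ are controlled. For $I_2^{\PP}(f_2(\tau_2)|f_\infty)$ the paper avoids your Dirichlet-form triangle inequality by instead passing through the entropy: it uses the orthogonality identity $e_2(f_2|f_\infty)=\tfrac12\norm{f_2-f_\infty}^2\leq\tfrac12\norm{f-f_\infty}^2=e_2(f|f_\infty)$ (since $f_1\perp f_2-f_\infty$), then invokes a dedicated regularisation lemma (Lemma~\ref{lem:hypercontractivity_improved}) bounding $I_2^{\II}(g(t)|f_\infty)$ by $e_2(g_0|f_\infty)$. For the lower-contractivity hypothesis on $f_2$, the paper again exploits orthogonality rather than your pointwise estimate $|f_2|\leq|f|+|f_1|$: it simply takes $p_2=2$ and uses $e_2(|f_2||f_\infty)\leq e_2(f_2|f_\infty)\leq e_2(f|f_\infty)$, which is then controlled by \eqref{eq:hypercontractivity_entropy_with_entropy_condition}. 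These shortcuts spare the separate bookkeeping for $f_1$ that your route requires, but your argument is equally valid and leads to the same conclusion.
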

%The idea behind the proof is to utilise the creation of finite generalised Fisher information of any order after a certain time, guaranteed by Lemma \ref{lem:hypocontractivity}, together with Theorem \ref{thm:Fisher_decay} for $p=1$, Theorem \ref{thm:Fisher_improved_decay} for $p=2$, and the interpolation of these generalised Fisher informations that is given by Theorem \ref{thm:interpolation_p}.

Before we start with the proof of the above, we state the following useful improvement of Lemma \ref{lem:hypercontractivity}:

\begin{lemma}\label{lem:hypercontractivity_improved}
	Let $g(t)$ be the solution to the Fokker-Planck equation \eqref{eq:fokkerplanck} with diffusion and drift matrices $\D$ and $\C$ which satisfy Conditions \eqref{item:cond_semipositive}--\eqref{item:cond_no_invariant_subspace_to_kernel} and with initial datum $g_0\in L^2\pa{\R^d,f_\infty^{-1}}$. Then, there exists an explicit time, $\tau_3$, which is independent of $g_0$, such that for all $t\geq \tau_3$ we have that the solution to the equation satisfies
%	\begin{equation}\label{eq:hypercontractivity_entropy}
%		\norm{g(t)}^2_{L^2\pa{\R^d,f_\infty^{-1}}} \leq \pa{\frac{8}{3}}^d\pa{\int_{\R^d}e^{\epsilon \abs{y}^2}\abs{g_0(y)}dy}^2,
%	\end{equation}
%	and
	\begin{equation}\label{eq:hypercontractivity_improved}
		I_{2}^{\mathbf{I}}(g(t)|f_\infty) \leq 2^{\frac{3d}{2}+1}3^{\frac{d}{2}}e_2\pa{g_0|f_\infty}.
	\end{equation}
\end{lemma}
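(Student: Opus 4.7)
The plan is to reduce Lemma \ref{lem:hypercontractivity_improved} to Lemma \ref{lem:hypercontractivity} via a linearity-plus-translation argument. The finite $e_2$ hypothesis is precisely an $L^2(\R^d, f_\infty^{-1})$ control on $h_0 := g_0 - f_\infty$, which one can then convert into the $L^1$ exponential-moment hypothesis of Lemma \ref{lem:hypercontractivity} by a Cauchy--Schwarz estimate against the Gaussian weight $f_\infty$.

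First I would exploit that $f_\infty$ is a stationary solution of the Fokker-Planck equation and the equation is linear: setting $h_0 := g_0 - f_\infty$, the function $h(t) := g(t) - f_\infty$ is the solution of \eqref{eq:fokkerplanck} with initial datum $h_0$. Next I would use the identity $\nabla(f_\infty / f_\infty) = 0$ to observe that $\nabla(g(t,x)/f_\infty(x)) = \nabla(h(t,x)/f_\infty(x))$, hence $I_2^{\mathbf{I}}(g(t)|f_\infty) = I_2^{\mathbf{I}}(h(t)|f_\infty)$; so it is enough to bound the Fisher information of $h$. Finally, since $\psi_2(y) = (y-1)^2/2$, one has $\|h_0\|^2_{L^2(\R^d,f_\infty^{-1})} = 2\,e_2(g_0|f_\infty)$, which converts the hypothesis of the lemma into a clean $L^2$ bound on $h_0$.

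To feed $h_0$ into Lemma \ref{lem:hypercontractivity} I would apply Cauchy--Schwarz by splitting $|h_0| = (|h_0| f_\infty^{-1/2}) \cdot f_\infty^{1/2}$ and absorbing $e^{\epsilon|x|^2}$ into the Gaussian factor. The integral $\int e^{2\epsilon|x|^2} f_\infty(x)\,dx$ is finite precisely for $\epsilon < 1/4$, and the choice $\epsilon = 1/6$ gives the particularly clean Gaussian computation $\int e^{|x|^2/3} f_\infty(x)\,dx = 3^{d/2}$; this yields $\int e^{|x|^2/6} |h_0|\,dx \leq 3^{d/4}\sqrt{2\,e_2(g_0|f_\infty)}$. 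Applying Lemma \ref{lem:hypercontractivity} to $h(t)$ at any $t \geq \tau_3 := \widetilde{t}_1(1/6)$ then gives $I_2^{\mathbf{I}}(h(t)|f_\infty) \leq 2^{3d/2} \cdot (3^{d/4})^2 \cdot 2\,e_2(g_0|f_\infty) = 2^{3d/2+1} 3^{d/2}\, e_2(g_0|f_\infty)$, which is exactly \eqref{eq:hypercontractivity_improved}.

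The one point that requires genuine verification---and the only thing that could plausibly be an obstacle---is that Lemma \ref{lem:hypercontractivity} is legitimately applicable to a \emph{signed} initial datum $h_0 = g_0 - f_\infty$, rather than a non-negative one. Inspecting its statement, both the hypothesis and the conclusion are phrased in terms of $|h_0|$ and of $\|h_0\|^2_{L^2(f_\infty^{-1})}$, and the proof (deferred to Appendix \ref{app:b}) propagates through integrals in absolute value, so no positivity assumption is actually used. Once this is noted, the argument above is essentially mechanical, and the explicit constant matches \eqref{eq:hypercontractivity_improved} without slack.
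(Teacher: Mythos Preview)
Your proof is correct and follows essentially the same approach as the paper: both shift from $g$ to $h=g-f_\infty$ (using linearity and stationarity of $f_\infty$) so that $I_2^{\mathbf{I}}(g(t)|f_\infty)=I_2^{\mathbf{I}}(h(t)|f_\infty)$, convert the $L^2(f_\infty^{-1})$ bound on $h_0$ into the exponential-moment hypothesis of Lemma~\ref{lem:hypercontractivity} via Cauchy--Schwarz with $\epsilon=1/6$ (the paper cites the equivalent inequality \eqref{eq:estimate_of_eps_exponential_moment}), and then invoke Lemma~\ref{lem:hypercontractivity} with $\tau_3=\widetilde{t}_1(1/6)$. Your explicit remark that Lemma~\ref{lem:hypercontractivity} applies to signed data is a useful clarification that the paper leaves implicit.
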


%\begin{remark}
%	We would like to remind the reader that the $2-$entropy is defined for any $f\in L^2\pa{\R^d,f_\infty^{-1}}$ since it has no sign restrictions and, as can be seen in the proof of Lemma \eqref{lem:hyper_conditions} in Appendix \ref{app:b}, $f\in L^2\pa{\R^d,f_\infty^{-1}}$ implies that $f\in L^1\pa{\R^d}$.
%\end{remark}

\begin{proof}
	We start by noticing that since 
	$$\nabla\pa{\frac{\amit{g\pa{t,x}}-f_\infty(x)}{f_\infty(x)}}=\nabla\pa{\frac{\amit{g\pa{t,x}}}{f_\infty(x)}}$$
	we have that for any $t$ for which the $2-$Fisher information is defined
	$$I^{\PP}_2\pa{g(t)|f_\infty}=I^{\PP}_2\pa{g(t)-f_\infty|f_\infty}.$$
	Using \eqref{eq:hypercontractivity} with $\epsilon=\frac{1}{6}$ together with the inequality
		\begin{equation}\nonumber
		\begin{gathered}
			\int_{\R^d}e^{\frac{\abs{x}^2}{6}}\abs{f(x)}dx \leq 
			3^{\frac{d}{4}}\norm{f}_{L^2\pa{\R^d,f_\infty^{-1}}},
		\end{gathered}
	\end{equation}
%	\begin{equation}\nonumber
%	\begin{gathered}
%		\int_{\R^d}e^{\epsilon \abs{x}^2}\abs{f(x)}dx \leq 
%		=\pa{\frac{\pi}{\epsilon}}^{\frac{d(p-1)}{2p}}\pa{2\pi}^{-\frac{d\pa{2p-1}\epsilon }{p}}\norm{f}_{L^p\pa{\R^d,f_\infty^{-2\pa{2p-1}\epsilon}}},
%	\end{gathered}
%\end{equation}
	(see \eqref{eq:estimate_of_eps_exponential_moment} in Appendix \ref{app:b})
	 we find that for $\tau_3:=\widetilde{t}_1\pa{\frac{1}{6}}>0$ we have that for any $t\geq \tau_3$
	$$	I_{2}^{\mathbf{I}}(g(t)|f_\infty) \leq 2^{\frac{3d}{2}}3^{\frac{d}{2}}\norm{g_0-f_\infty}_{L^2\pa{\R^d,f^{-1}_\infty}}^2=2^{\frac{3d}{2}+1}3^{\frac{d}{2}}e_2\pa{g_0|f_\infty},$$
	which is the desired result.
\end{proof}

We are now ready to prove Theorem \ref{thm:fisher_decay_on_V_1_perp}.

\begin{proof}[Proof of Theorem \ref{thm:fisher_decay_on_V_1_perp}]
	Using part \eqref{item:hyper} of Theorem \ref{thm:contractivity} we can find $t_1(p)>0$ after which we are assured to have that $f(t)\in L^2\pa{\R^d,f_\infty^{-1}}$ and \eqref{eq:hypercontractivity_entropy_with_entropy_condition} holds for $f(t)$ with $p_2=p$. Since $f_1(t)\in L^2\pa{\R^d,f_\infty^{-1}}$ for any $t\geq 0$, as was shown in the proof of Lemma \ref{lem:LonV1}, we conclude that for all $t\geq t_1(p)$ we have that $f_2(t)\in L^2\pa{\R^d,f_\infty^{-1}}$. Since $f_2(t)\in V_1^{\perp}$ for all such \amit{$t$}, Theorem \ref{thm:Fisher_improved_decay} implies that for any $\tau \geq t_1(p)$ we have that if $t\geq \tau$ then
	\begin{equation}\nonumber
		\begin{gathered}
			I^{\bf{P}}_{2}\pa{f_2(t)|f_\infty} %\leq I^{\bf{P}}_{2}\pa{f_2(t_0)|f_\infty}e^{-2\pa{\lambda+\mathrm{d}_{\mathrm{min}}} (t-t_0)}	
			\leq I^{\PP}_{2}\pa{f_2\pa{\tau}|f_\infty}e^{-2\pa{\lambda+\mathrm{d}_{\mathrm{min}}} (t-\tau)}.
		\end{gathered}
	\end{equation}
Moreover, since $f_1\in V_1$ and $f_2-f_\infty\in V_1^{\perp}$ we find that
\begin{equation}\label{eq:e_2_for_f_and_f_2}
\begin{gathered}
	e_2\pa{f_2(t)|f_\infty}=\frac{1}{2}\norm{f_2(t)-f_\infty}_{L^2\pa{\R^d,f_\infty^{-1}}}^2  \\
	\leq\frac{1}{2}\norm{f_1(t)+f_2(t)-f_\infty}_{L^2\pa{\R^d,f_\infty^{-1}}}^2 =e_2\pa{f(t)|f_\infty}.
\end{gathered}
\end{equation}
Combining the above with a time shift of Lemma \ref{lem:hypercontractivity_improved} we see that for any $t \geq \widetilde{\tau}\pa{p}:=\tau_3+t_1(p)$ we have that
\begin{equation}\label{eq:improved_decay_perp_v_2_part_I}
	\begin{gathered}
		I^{\bf{P}}_{2}\pa{f_2(t)|f_\infty} \leq I^{\bf{P}}_{2}\pa{f_2\pa{\widetilde{\tau}}|f_\infty}e^{-2\pa{\lambda+\mathrm{d}_{\mathrm{min}}} (t-\widetilde{\tau})}\\	
		\leq 2^{\frac{3d}{2}+1}3^{\frac{d}{2}}\mathrm{p}_{\mathrm{max}}e_2\pa{f(t_1)|f_\infty}e^{-2\pa{\lambda+\mathrm{d}_{\mathrm{min}}} (t-\widetilde{\tau})} \\
		\leq \mathrm{p}_{\mathrm{max}}\widetilde{c}_{1,p}\rpa{p\pa{p-1}e_p\pa{f_0|f_\infty}+1}^{\frac{2}{p}}e^{-2\pa{\lambda+\mathrm{d}_{\mathrm{min}}} (t-\widetilde{\tau})}
	\end{gathered}
\end{equation}
where we have used \eqref{eq:hypercontractivity_entropy_with_entropy_condition} and the fact that $f_0$ is non-negative. Defining
$$\tau_2(p,\epsilon):=\max\pa{\widetilde{\tau}\pa{p}, t_1\pa{p,2,2\epsilon}},$$
where $t_1\pa{p_1,p_2,\epsilon}$ is given in part \eqref{item:hypo} of Theorem \ref{thm:contractivity}, we see that according to Theorem \ref{thm:Fisher_decay} with $\psi=\psi_1$ and a time shifted variant of part \ref{item:hypo} of Theorem \ref{thm:contractivity}  with $p=1$, $f(t)=f(t)$, $p_1=p$, $g(t)=f_2\pa{t}$ and $p_2=2$, similar to Remark \ref{rem:time_shift}, we have that\footnote{Note that since $t\geq\widetilde{\tau}(p)\geq  t_1(p)$ we have that $f_2(t)\in L^2\pa{\R^d,f_\infty^{-1}}$ and, as can be seen in the proof of Lemma \eqref{lem:hyper_conditions} in Appendix \ref{app:b}, this implies that $f\in L^1\pa{\R^d}$ which is needed for Theorem \ref{thm:Fisher_decay}.}
\begin{equation}\nonumber
	\begin{gathered}
		\mathcal{I}^{\bf{P}}_1 \pa{f(t),f_2(t)|f_\infty} 
		\leq \mathrm{p}_{\mathrm{max}}\mathcal{I}^{\II}_{1}\pa{f(\tau_2),f_2(\tau_2)|f_\infty}e^{-2\lambda (t-\tau_2)} \\
		\leq \widehat{c}_{2,p,\epsilon}\mathrm{p}_{\mathrm{max}}\rpa{p(p-1)e_p\pa{f(\tau_2)|f_\infty}+1}^{\frac{2\epsilon}{p}}\pa{2e_2\pa{\abs{f_2(\tau_2)}|f_\infty}+1} e^{-2\lambda (t-\tau_2)}\\
	\leq \widehat{c}_{2,p,\epsilon}\mathrm{p}_{\mathrm{max}}\rpa{p(p-1)e_p\pa{f_0|f_\infty}+1}^{\frac{2\epsilon}{p}}\pa{2e_2\pa{f_2(\tau_2)|f_\infty}+1} e^{-2\lambda (t-\tau_2)}
		\\
		\leq \widehat{c}_{2,p,\epsilon}\mathrm{p}_{\mathrm{max}}\rpa{p(p-1)e_p\pa{f_0|f_\infty}+1}^{\frac{2\epsilon}{p}}\pa{2e_2\pa{f(\tau_2)|f_\infty}+1} e^{-2\lambda (t-\tau_2)}.
	\end{gathered}
\end{equation}
Here we have used the fact that the $p-$entropy of $f(t)$ is decreasing, inequality \eqref{eq:e_2_for_f_and_f_2}, and the fact that for any $g\in L^1\pa{\R^d,f_\infty^{-1}}$ we have that 
$$e_2\pa{\abs{g}|f_\infty} = \frac{1}{2}\pa{\norm{g}^2_{L^2\pa{\R^d,f_\infty^{-1}}}-2\norm{g}_{L^1\pa{\R^d}}+1}$$
$$\leq \frac{1}{2}\pa{\norm{g}^2_{L^2\pa{\R^d,f_\infty^{-1}}}-2\int_{\R^d}g(x)dx+1}=e_2\pa{g|f_\infty}.$$
Using \eqref{eq:hypercontractivity_entropy_with_entropy_condition} again with $f$ instead of $g$ and $p_2=p$ we conclude that for any $t\geq \tau_2\pa{p,\epsilon}$
\begin{equation}\label{eq:improved_decay_perp_v_2_part_II}
	\begin{gathered}
		\mathcal{I}^{\bf{P}}_1 \pa{f(t),f_2(t)|f_\infty} 
		\leq \widetilde{c}_{2,p,\epsilon}\mathrm{p}_{\mathrm{max}}\rpa{p(p-1)e_p\pa{f_0|f_\infty}+1}^{\frac{2}{p}\pa{1+\epsilon}} e^{-2\lambda (t-\tau_2)}.
	\end{gathered}
\end{equation}
Interpolating  \eqref{eq:improved_decay_perp_v_2_part_I}, with  $\widetilde{\tau}$ replaced with the larger $\tau_2$, and \eqref{eq:improved_decay_perp_v_2_part_II} via \eqref{eq:interpolation_p} gives the desired result. The proof of \eqref{eq:Fisher_improved_decay_with_e_p_1} is now complete.
\end{proof}

We are now ready to prove the main theorem of this section. 
\subsection{Step 4: The Proof of Theorem \ref{thm:main_for_fisher}}\label{subsec:main_for_fisher_proof} 
\begin{proof}[Proof of Theorem \ref{thm:main_for_fisher}]
	Given $\epsilon>0$, let $\tau_0\pa{p,\epsilon}:=\max\pa{\tau_1\pa{p,\epsilon},\tau_2\pa{p,\epsilon}}$ where $\tau_1\pa{p,\epsilon}$ and $\tau_2\pa{p,\epsilon}$  are given in Theorem  \ref{thm:fisher_decay_on_V_1} and Theorem \ref{thm:fisher_decay_on_V_1_perp} respectively. Consider the decomposition $f(t,x)=f_1(t,x)+f_2(t,x)$ described in Step 1. Since for any positive definite matrix $\PP$ and any two vectors $\bm{u}$ and $\bm{v}$ we have that
	$$\pa{\bm{u}+\bm{v}}^T \PP \pa{\bm{u}+\bm{v}} \leq 2\pa{\bm{u}^T \PP \bm{u}+\bm{v}^T\PP\bm{v}}$$
	we find that for any $p\in [1,2]$
	\begin{equation}\label{eq:fisher_in_terms_of_f_1_and_f_2}
		\begin{gathered}
			I_p^{\PP}\pa{f(t)|f_\infty}=\I_p^{\PP}\pa{f(t),f_1(t)+f_2(t)|f_\infty} \\
			\leq 2\pa{\I_p^{\PP}\pa{f(t),f_1(t)|f_\infty}+\I_p^{\PP}\pa{f(t),f_2(t)|f_\infty}}.
		\end{gathered}
	\end{equation}
	Inequalities \eqref{eq:fisher_decay_on_V_1_with_I_p}  and  \eqref{eq:Fisher_improved_decay_with_e_p} imply that for any $t\geq \tau_0\pa{p,\epsilon}$
	$$I_p^{\PP}\pa{f(t)|f_\infty} \leq \widetilde{c}_{1,p,\epsilon} \mathrm{p}_{\mathrm{max}}\rpa{p(p-1)e_p\pa{f_0|f_\infty}+1}^{\frac{2}{p}\pa{1+\epsilon(2-p)}}\pa{1+(t-\tau_0)^{2n}} e^{-2\mu (t-\tau_0)}$$
	$$+\widetilde{c}_{2,p,\epsilon} \mathrm{p}_{\mathrm{max}}\rpa{p(p-1)e_p\pa{f_0|f_\infty}+1}^{\frac{2}{p}\pa{1+\epsilon(2-p)}}e^{-2\pa{\mu-\nu+\mathrm{d}_{\mathrm{min}}(p-1)}(t-\tau_0)}$$
	$$\leq\max\pa{\widetilde{c}_{1,p,\epsilon},\widetilde{c}_{2,p,\epsilon}}\mathrm{p}_{\mathrm{max}}\rpa{p(p-1)e_p\pa{f_0|f_\infty}+1}^{\frac{2}{p}\pa{1+\epsilon(2-p)}}\pa{1+(t-\tau_0)^{2n}} e^{-2\mu (t-\tau_0)}$$
	$$\times \pa{1+\pa{1+(t-\tau_0)^{2n}}^{-1}e^{-2\pa{\mathrm{d}_{\mathrm{min}}(p-1)-\nu}(t-\tau_0)}}$$
	$$\leq 2\max\pa{\widetilde{c}_{1,p,\epsilon},\widetilde{c}_{2,p,\epsilon}}\mathrm{p}_{\mathrm{max}}\rpa{p(p-1)e_p\pa{f_0|f_\infty}+1}^{\frac{2}{p}\pa{1+\epsilon(2-p)}}\pa{1+(t-\tau_0)^{2n}} e^{-2\mu (t-\tau_0)}$$
	$$\leq \underbrace{2\max\pa{\widetilde{c}_{1,p,\epsilon},\widetilde{c}_{2,p,\epsilon}}e^{2\mu\tau_0}}_{:=c_{p,\epsilon}}\mathrm{p}_{\mathrm{max}}\rpa{p(p-1)e_p\pa{f_0|f_\infty}+1}^{\frac{2}{p}\pa{1+\epsilon(2-p)}}\pa{1+t^{2n}} e^{-2\mu t},$$
	where the shift from $\tau_1$ and $\tau_2$ to $\tau_0$ was absorbed in the constants $\widetilde{c}_{1,p,\epsilon}$ and $\widetilde{c}_{2,p,\epsilon}$. This concludes the proof of the first statement of the theorem.
	
	When $f_0$ has a finite $2-$entropy, or equivalently $f_0\in L^2\pa{\R^d,f_\infty^{-1}}$, we know that $f(t)\in L^2\pa{\R^d,f_\infty^{-1}}$ for all $t\geq 0$ and consequently so does $f_1(t)$ and $f_2(t)$. Using Theorem \ref{thm:Fisher_improved_decay} and Lemma  \ref{lem:hypercontractivity_improved} we can find $\tau_3>0$ which is independent of $f_0$ such that for any $t\geq \tau_3$ 
	\begin{equation}\label{eq:improved_decay_proof_I}
		\begin{gathered}
				I_{2}^{\PP}(f_2(t)|f_\infty) \leq I^{\bf{P}}_{2}\pa{f_2(\tau_3)|f_\infty}e^{-2\pa{\pa{\mu-\nu}+\mathrm{d}_{\mathrm{min}}} (t-\tau_3)} \\
				 \leq 2^{\frac{3d}{2}+1}3^{\frac{d}{2}}\mathrm{p}_{\mathrm{max}}e_2\pa{f_2(0)|f_\infty}e^{-2\pa{\pa{\mu-\nu}+\mathrm{d}_{\mathrm{min}}} (t-\tau_3)}\\ \leq \mathrm{p}_{\mathrm{max}}2^{\frac{3d}{2}+1}3^{\frac{d}{2}}e^{2\pa{\pa{\mu-\nu}+\mathrm{d}_{\mathrm{min}}}\tau_3} e_2\pa{f_0|f_\infty}e^{-2\pa{\pa{\mu-\nu}+\mathrm{d}_{\mathrm{min}}} t},
		\end{gathered}
	\end{equation}
	where we have used \eqref{eq:e_2_for_f_and_f_2} again. To deal with $f_1$ we notice that from Lemma \ref{lem:LonV1} with $\overline{t_0}=0$ we have that for any $t\geq 0$
	$$		I_2^{\PP}\pa{f_1(t)|f_\infty} \leq \mathrm{p}_{\mathrm{max}}\int_{\R^d}\abs{\nabla\pa{\frac{f_1\pa{t,x}}{f_\infty(x)}}}^2 f_\infty(x)dx$$
$$\leq \widetilde{c}_2 \mathrm{p}_{\mathrm{max}} \norm{f_1(0)}^2_{L^2\pa{\R^d,f_\infty^{-1}}} \pa{1+t^{2n}} e^{-2\mu t} $$
$$\leq \widetilde{c}_2 \mathrm{p}_{\mathrm{max}} \norm{f_0-f_\infty}^2_{L^2\pa{\R^d,f_\infty^{-1}}} \pa{1+t^{2n}} e^{-2\mu t} $$
%$$=2\widetilde{c}_2 \mathrm{p}_{\mathrm{max}} e_2\pa{f_0|f_\infty}\pa{1+t^{2n}} e^{-2\mu t},$$
	with $\widetilde{c}_2$ being a fixed constant. Here we have used the fact that since $f_1(t) \perp f_2(t)-f_\infty$ for any $t$ we have that
	$$\norm{f_1(t)}^2_{L^2\pa{\R^d,f_\infty^{-1}}} \leq \norm{f_1(t)}^2_{L^2\pa{\R^d,f_\infty^{-1}}}+\norm{f_2(t)-f_\infty}^2_{L^2\pa{\R^d,f_\infty^{-1}}}=\norm{f(t)-f_\infty}^2_{L^2\pa{\R^d,f_\infty^{-1}}}.$$
	From the definition of the $2-$entropy we conclude that for any $t\geq 0$
%	Since the $2-$entropy decreases along the flow of \eqref{eq:fokkerplanck} we conclude that for any $t_0>0$ we have that for $t\geq t_0$
	 	\begin{equation}\label{eq:improved_decay_proof_II}
	 	\begin{gathered}
	 		I_2^{\PP}\pa{f_1(t)|f_\infty} \leq 
	 		c_2 \mathrm{p}_{\mathrm{max}} e_2\pa{f_0|f_\infty}\pa{1+t^{2n}} e^{-2\mu t}.
	 	\end{gathered}
	 \end{equation}
	Combining \eqref{eq:improved_decay_proof_I} and the fact that $\nu < \mathrm{d}_{\mathrm{min}}(p-1)\leq \mathrm{d}_{\mathrm{min}}$, together with \eqref{eq:improved_decay_proof_II} and \eqref{eq:fisher_in_terms_of_f_1_and_f_2} for $p=2$ we see that for any given $t\geq \tau_3$ we have that
	$$I_2^{\PP}\pa{f(t)|f_\infty} \leq c \;\mathrm{p}_{\mathrm{max}}e_2\pa{f_0|f_\infty}\pa{1+t^{2n}} e^{-2\mu t}.$$
The above is exactly \eqref{eq:main_for_fisher_improved} which concludes the proof.
\end{proof}

%%%%%%%%%%%%%%%%%%%%%%%%%%%%%%%%%%%%%%%%%%%%%%%%%%%%%%%%%%%%%%%%%%%%%%%%%%%%%%%%%%%%%%%%%%%%%%%%%%%%%%%%%%%%%%%%%

\section{The Convergence to Equilibrium in Entropy}\label{sec:convergence_entropy}
With the establishment of the convergence to equilibrium in Fisher Information in the previous section, the proof of our main result, Theorem \ref{thm:main}, is now easily within our grasp and will be the only content of this short section.
\begin{proof}[Proof of Theorem \ref{thm:main}]
	We start by finding a positive definite matrix $\PP$ which satisfies \eqref{eq:CPcondition} for $\lambda=\mu-\nu$ with $0\leq \nu < \mathrm{d}_{\mathrm{min}}(p-1)$. The existence of such matrix is guaranteed by the work of Arnold and Erb (see \cite{AE}) and the fact that $p>1$.\\ 
	Using \eqref{eq:main_for_fisher} from Theorem \ref{thm:main_for_fisher} we see that for any given $1<p \leq 2$ and any $\epsilon>0$, there exists an explicit $\tau_0(p,\epsilon)\geq 0$ such that for any $t\geq \tau_0(p,\epsilon)$
	\begin{equation}\nonumber
		\begin{gathered}
			I_p^{\PP}\pa{f(t)|f_\infty} \leq c_{p,\epsilon} \mathrm{p}_{\mathrm{max}}\rpa{p(p-1)e_p\pa{f_0|f_\infty}+1}^{\frac{2}{p}\pa{1+\epsilon(2-p)}}
			\pa{1+t^{2n}} e^{-2\mu t}.
		\end{gathered}
	\end{equation}
Since Fokker-Planck equations of the form \eqref{eq:fokkerplanck} carry a log-Sobolev inequality of the form
$$e_p\pa{f|f_\infty} \leq C_{LS}I_p^{{\bf{I}}}\pa{f|f_\infty},$$
where $C_{LS}>0$ is an explicit constant that is independent of $f$ and $p$ (see, for instance, \cite{AE}), we conclude that for all $t\geq \tau_0(p,\epsilon)$
	\begin{equation}\nonumber
	\begin{gathered}
		e_p\pa{f(t)|f_\infty} \leq \frac{C_{LS}}{\mathrm{p}_{\mathrm{min}}}I_p^{\PP}\pa{f(t)|f_\infty} \\
		\leq C_{LS}c_{p,\epsilon} \frac{\mathrm{p}_{\mathrm{max}}}{\mathrm{p}_{\mathrm{min}}}\rpa{p(p-1)e_p\pa{f_0|f_\infty}+1}^{\frac{2}{p}\pa{1+\epsilon(2-p)}}
		\pa{1+t^{2n}} e^{-2\mu t}.
	\end{gathered}
\end{equation}
Since the entropy is decreasing for all $t$ we conclude that for all $0\leq t\leq \tau_0(p,\epsilon)$
$$e_p\pa{f(t)|f_\infty} \leq e_p\pa{f_0|f_\infty}\leq e^{2\mu \tau_0}e_p\pa{f_0|f_\infty}\pa{1+t^{2n}}e^{-2\mu t}.$$
Combining the results for $t\leq \tau_0(p,\epsilon)$ and $t\geq \tau_0(p,\epsilon)$, and remembering that $\tau_0(p,\epsilon)$ is given explicitly and independently of $f_0$ and may become unbounded as $\epsilon$ goes to zero, we conclude the main proof of the theorem for this case. The case $p=2$ follows in a similar way from \eqref{eq:main_for_fisher_improved}.
\end{proof} 

We conclude our work with a few final remarks. 
%%%%%%%%%%%%%%%%%%%%%%%%%%%%%%%%%%%%%%%%%%%%%%%%%%%%%%%%%%%%%%%%%%%%%%%%%%%%%%%%%%%%%%%%%%%%%%%%%%%%%%%%%%%%%%%%%
%%%%%%%%%%%%%%%%%%%%%%%%%%%%%%%%%%%%%%%%%%%%%%%%%%%%%%%%%%%%%%%%%%%%%%%%%%%%%%%%%%%%%%%%%%%%%%%%%%%%%%%%%%%%%%%%%

\section{Final Remarks}\label{sec:finalremarks}
While the main result of our paper has been established before, we find that the main novelty and importance of the presented work is the recognition of the generalised Fisher information as an important functional to investigate. We find it remarkable
%, as well as geometrically telling, 
that this functional, which distinguishes and takes advantage of the two different parts of the standard Fisher information, carries a similar long term behaviour of its standard namesake.

It is yet unclear to us if the generalised Fisher information can be used in other equations with an entropic structure, but we feel that our paper promotes the benefits of extending existing investigations of functionals to exploit the geometric properties of the spaces that lie in the heart of the spectral study of the governing linear operator.

\amit{Lastly, we would like to mention that as is mentioned in Remark \ref{rem:only_place}, the recent work \cite{AEW23} has shown an improved version of Theorem \ref{thm:Fisher_improved_decay} which allows for degeneracy in $\D$. Using this result instead of Theorem \ref{thm:Fisher_improved_decay} shows that our main theorem (and its stronger Fisher information variant) holds true as long as $\D$ and $\C$ satisfy conditions \eqref{item:cond_semipositive}-\eqref{item:cond_no_invariant_subspace_to_kernel}.}
%%%%%%%%%%%%%%%%%%%%%%%%%%%%%%%%%%%%%%%%%%%%%%%%%%%%%%%%%%%%%%%%%%%%%%%%%%%%%%%%%%%%%%%%%%%%%%%%%%%%%%%%%%%%%%%%%
%%%%%%%%%%%%%%%%%%%%%%%%%%%%%%%%%%%%%%%%%%%%%%%%%%%%%%%%%%%%%%%%%%%%%%%%%%%%%%%%%%%%%%%%%%%%%%%%%%%%%%%%%%%%%%%%%

\appendix
\section{Generalised Fisher Information for Fokker-Planck Equations with Non-Quadratic Potentials}\label{app:a}

In this appendix we shall briefly consider the decay properties of the generalised Fisher information in more general Fokker-Planck equations. Our analysis is based on the cases considered in \cite{AMTU01}. In these cases the diffusion and drift matrices are allowed to depend on the spatial variable, though the diffusion matrix $\D(x)$ is assumed to be non-degenerate (much like our own work here). The drift term that is considered is one that arises from a potential.

In this appendix we will consider the following type of Fokker-Planck equations:
\begin{equation}\label{FP2}
  \partial_t f(t,x)=\text{div}\pa{\D(x)\pa{\nabla f(t,x)+f(t,x)\nabla\phi(x)}}, \quad\quad t>0, x\in\R^d\,,
\end{equation}
where the confinement potential $\phi(x)$ is such that $e^{-\phi(x)} \in L^1\pa{\R^d}$ and the matrix $\D=\D(x)$ is symmetric and locally uniformly positive definite. Under these assumptions, a unique normalised equilibrium of \eqref{FP2} exists and is given by 
$$f_\infty(x) = c_{\infty}e^{-\phi(x)},$$
for an appropriate constant $c_\infty$. To simplify notations, we may assume without loss of generality that $c_\infty=1$. The Fisher information of an admissible relative entropy with generating function $\psi$ of a unit mass function $f$ with respect to the equilibrium $f_\infty$ is defined as 
$$ I_\psi(f|f_\infty):=\int_{\R^d}\psi^{\prime\prime}\pa{\frac{f(x)}{f_\infty(x)}}u(x)^T\D(x) u(x)f_\infty(x)dx,$$
where $u(x):=\nabla\pa{\frac{f(x)}{f_\infty(x)}}$. Much like our presented work, we can extend this definition to the generalised Fisher information 
$$ \I_\psi(f,g|f_\infty):=\int_{\R^d}\psi^{\prime\prime}\pa{\frac{f(x)}{f_\infty(x)}}v(x)^T\D(x) v(x) f_\infty(x)dx,$$
where $v(x):=\nabla\pa{\frac{g(x)}{f_\infty(x)}}$. \\
The following theorem explores the decay rate of this variant of the generalised Fisher information in the case appearing in Lemma 2.13 of \cite{AMTU01}:
%, where we have used similar of the notations of \cite{ AMTU01} to emphasise the similarity in the proof:
\begin{theorem}\label{thm:Fisher-decay2}
Let $f$ and $g$ be solutions to \eqref{FP2} with initial
data $f_0\in L^2_+\pa{\R^d,f_\infty^{-1}}$ and $g_0\in L^2\pa{\R^d,f_\infty^{-1}}$. Assume in addition that $f_0$ is of unit mass, $I_\psi\pa{f_0,g_0| f_\infty}<\infty$, and that
$$\D(x)=D(x)\II$$
where $D(x)$ is a positive function. If there exists $\lambda_1>0$ such that for any $x\in\R^d$
\begin{equation}\label{A1}
\begin{gathered}
\pa{\frac{1}{2}-\frac{d}{4}}\frac{1}{D(x)}\nabla D(x)\otimes\nabla D(x)+\frac{1}{2}\pa{\Delta D(x)-\nabla D(x)\cdot\nabla \phi(x)}\II \\
+D(x)\hess(\phi)(x)+\frac{\nabla \phi(x)\otimes\nabla D(x) + \nabla D(x)\otimes\nabla \phi(x)}{2}-\hess(D)(x) \geq \lambda_1\II,
\end{gathered}
\end{equation}
%where the above is to be understood in the sense of positive definite matrices. 
then for all $t\geq 0$ we have that
\begin{equation} \label{decay-est}
  \I_\psi\pa{f(t),g(t)|f_\infty}\leq \I_\psi\pa{f_0,g_0|f_\infty}e^{-2\lambda_1t}. 
\end{equation}
\end{theorem}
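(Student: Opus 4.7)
The plan is to follow the same Bakry-Emery template used in the proof of Theorem \ref{thm:Fisher_decay}, but adapted to the spatially varying diffusion $\D(x)=D(x)\II$ and the general potential $\phi$. First I would rewrite \eqref{FP2} in the ``gradient-flow'' form
\begin{equation}\nonumber
\partial_t f(t,x)=\operatorname{div}\pa{D(x)f_\infty(x)\nabla\pa{\frac{f(t,x)}{f_\infty(x)}}},
\end{equation}
and analogously for $g$, since $\nabla h+h\nabla\phi=f_\infty\nabla(h/f_\infty)$. Writing $u:=\nabla(f/f_\infty)$ and $v:=\nabla(g/f_\infty)$, I would then compute
\begin{equation}\nonumber
\frac{d}{dt}\I_\psi\pa{f(t),g(t)|f_\infty}=\underbrace{\int_{\R^d}\psi'''\pa{\frac{f}{f_\infty}}\partial_t f\, D\,|v|^2\,dx}_{\mathfrak A}+\underbrace{2\int_{\R^d}\psi''\pa{\frac{f}{f_\infty}}D\,v^T\partial_t v\, f_\infty\,dx}_{\mathfrak B},
\end{equation}
where $\partial_t v=\nabla(\partial_t g/f_\infty)=\nabla\big(\operatorname{div}(Dv)-D\nabla\phi\cdot v\big)$. (I am absorbing the factor $D$ into the integrand rather than into the quadratic form, because in the scalar case $\D=D\II$ this simplifies the trace bookkeeping.)

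Next I would integrate by parts in both $\mathfrak A$ and $\mathfrak B$, exactly as in \eqref{eq:fishcerdecayproofIV} and \eqref{eq:fishcerdecayproofII} of the main text, pushing every $\nabla$ off $\operatorname{div}(Dv)$ and $\operatorname{div}(Du)$ and organising the resulting terms by the order of $\psi$-derivative they carry. The terms with $\psi''''$ and $\psi'''$ arising from the chain rule on $\psi''(f/f_\infty)$ will combine with cross terms $\psi'''\, u^T(\cdot)\,v^T(\cdot)v$ into a quadratic form
\begin{equation}\nonumber
-2\int_{\R^d}\tr\pa{\X(x)\Y(x)}f_\infty(x)\,dx,
\end{equation}
where $\X$ is the same $2\times 2$ matrix as in the proof of Theorem \ref{thm:Fisher_decay}, hence PSD by the admissibility condition \eqref{eq:psicondition}, and $\Y$ will be (up to the multiplicative $D(x)$) the symmetric matrix
\begin{equation}\nonumber
\Y(x)=\begin{pmatrix} \tr\!\big(J_x(v)D\,J_x(v)\big) & u^T D\,J_x(v)v \\ u^T D\,J_x(v)v & |u|_D^2\,|v|_D^2\end{pmatrix},
\end{equation}
which is PSD by exactly the Cauchy–Schwarz argument used at the end of that proof (with $\sqrt{\PP}$ replaced by $\sqrt D\,\II$).

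The remaining terms — those with only a $\psi''$ in front — collect all the ``curvature'' contributions: $\hess(\phi)$ from differentiating $\nabla\phi$, $\hess(D)$ and $\nabla D\otimes\nabla D$ from differentiating the $x$-dependent diffusion, a $\nabla\phi\otimes\nabla D$ symmetric term from the commutator, and a $(\Delta D-\nabla D\cdot\nabla\phi)\II$ term from the divergence structure. After symmetrisation this is exactly the left-hand side of \eqref{A1} contracted against $v\otimes v$; the dimension-dependent coefficient $(\tfrac12-\tfrac d4)$ comes from the trace structure of $J_x(v)D J_x(v)$ in the scalar case $\D=D\II$, and matches the computation in Lemma 2.13 of \cite{AMTU01}. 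Assumption \eqref{A1} then gives
\begin{equation}\nonumber
\frac{d}{dt}\I_\psi\pa{f(t),g(t)|f_\infty}\le -2\lambda_1\int_{\R^d}\psi''\pa{\tfrac{f}{f_\infty}}|v|^2 f_\infty\,dx-2\int_{\R^d}\tr(\X\Y)f_\infty\,dx\le -2\lambda_1\I_\psi\pa{f,g|f_\infty},
\end{equation}
from which \eqref{decay-est} follows by Grönwall's lemma.

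The main obstacle will be the bookkeeping in Step 2: tracking all the first and second derivatives of $D$ and $\phi$ generated by the two successive integrations by parts, and verifying that the ``curvature'' terms symmetrise into precisely the matrix displayed in \eqref{A1} (in particular that the $-\hess(D)$ and $+\tfrac12(\Delta D-\nabla D\cdot\nabla\phi)\II$ contributions appear with the right signs and that all surface terms vanish under the assumption $f_0,g_0\in L^2(\R^d,f_\infty^{-1})$, which via parabolic regularisation yields sufficient decay at infinity to justify every integration by parts). Once this algebraic identity is in place, the PSD structure of $\X$ and $\Y$ handles the remainder and the rate $2\lambda_1$ is immediate.
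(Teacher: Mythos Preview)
Your overall strategy is right, but the remainder matrix $\Y$ you propose is too simple and the proof will not close with it. When $D$ depends on $x$, the integrations by parts in $\mathfrak A$ and $\mathfrak B$ each produce an extra $\psi'''$-term of the form
\[
-\int_{\R^d}\psi'''\!\pa{\tfrac{f}{f_\infty}}D(x)\,\big(u\cdot\nabla D\big)\,|v|^2\,f_\infty\,dx,
\]
so the off-diagonal entry of $\Y$ is forced to be $\beta(x)=D^2\,u^TJ_x(v)v+\tfrac12 D\,|v|^2\,(\nabla D\cdot u)$, not just $D\,u^TJ_x(v)v$. With this $\beta$, your proposed $\Y_{11}=\tr(J_x(v)DJ_x(v))$ no longer gives $\det\Y\ge 0$ by Cauchy--Schwarz; you cannot simply replace $\sqrt{\PP}$ by $\sqrt D\,\II$ as in Theorem~\ref{thm:Fisher_decay}.

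The fix the paper uses (following \cite{ArCaJuL08}) is to \emph{complete the square} in the $\psi''$-terms: one adds and subtracts $\tfrac{d-2}{4}(v\cdot\nabla D)^2/D$, so that the $\psi''$-remainder becomes $\alpha(x)=\tr(\mathbf Z\mathbf Z^T)$ with
\[
\mathbf Z:=D\,J_x(v)+\tfrac12\nabla D\otimes v+\tfrac12 v\otimes\nabla D-\tfrac12(\nabla D\cdot v)\,\II,
\]
and one checks that $\beta=\tr(\mathbf Z\mathbf W^T)$, $\gamma=\tr(\mathbf W\mathbf W^T)$ with $\mathbf W:=D\,u\otimes v$. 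Then $\Y$ is a Gram matrix and hence PSD. The coefficient $\big(\tfrac12-\tfrac d4\big)$ in \eqref{A1} is exactly the ``subtracted'' piece from this completion of the square, not a byproduct of the trace structure of $J_x(v)DJ_x(v)$ as you suggest. So the missing idea is: the $\nabla D$-corrections must be absorbed into $\Y$, not into the curvature matrix, and this reorganisation is what produces both the dimension-dependent term in \eqref{A1} and the positivity of the remainder.
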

\begin{remark}
	At this point it is worth to mention that the result on which the above is based was extended in \cite{ArCaJuL08} to more general cases of diffusion matrices $\D(x)$. Due to the similarity of the proof we will shortly provide to that appearing in \cite{AMTU01} we believe that using the same technical computation presented in \cite{ArCaJuL08} we would be able to extend the above, but we have elected to leave that discussion to a different study.
\end{remark}
\begin{proof}
It is simple to see that in this setting equation \eqref{FP2} can be rewritten as
\begin{equation}\label{eq:fprewrite}
	\partial_t f(t,x)=\text{div}\pa{f_\infty(x)D(x)\nabla \pa{\frac{f(t,x)}{f_\infty(x)}}}.
	\end{equation}
Differentiating\footnote{Here we are assuming that we are indeed allowed to differentiate under the sign of the integral, which is a standard procedure in utilising the entropy-method.} our generalised Fisher information yields the expression
\begin{equation}\label{eqapp:0}
\begin{gathered}
	\frac{d}{dt}\I_\psi\pa{f(t),g(t)|f_\infty} = \int_{\R^d}\psi^{\prime\prime\prime}\pa{\frac{f(t,x)}{f_\infty(x)}}\partial_t f(t,x) D(x) \abs{v(t,x)}^2 dx \\
	+2\int_{\R^d}\psi^{\prime\prime}\pa{\frac{f(t,x)}{f_\infty(x)}}\partial_t v(t,x)\cdot  v(t,x)D(x)f_\infty(x)dx.
\end{gathered}	
\end{equation}
Using \eqref{eq:fprewrite} and integration by parts shows that
\begin{equation}\label{eqapp:I}
	\begin{gathered}
		\int_{\R^d}\psi^{\prime\prime\prime}\pa{\frac{f(t,x)}{f_\infty(x)}}\partial_t f(t,x) D(x) \abs{v(t,x)}^2 dx\\
		=-\int_{\R^d}\psi^{\prime\prime\prime\prime}\pa{\frac{f(t,x)}{f_\infty(x)}}D(x)^2\abs{u(t,x)}^2\abs{v(t,x)}^2 f_\infty(x)dx\\
		-\int_{\R^d}\psi^{\prime\prime\prime}\pa{\frac{f(t,x)}{f_\infty(x)}}D(x) u(t,x)\cdot \nabla D(x) \abs{v(t,x)}^2 f_\infty(x)dx\\
		-2\int_{\R^d}\psi^{\prime\prime\prime}\pa{\frac{f(t,x)}{f_\infty(x)}}D(x)^2 v(t,x)^T J_x \pa{v(t,x)} u(t,x) f_\infty(x)dx,
	\end{gathered}
\end{equation}
%$$\int_{\R^d}\psi^{\prime\prime\prime}\pa{\frac{f(t,x)}{f_\infty(x)}}\partial_t f(t,x) D(x) \abs{v(t,x)}^2 dx$$
%$$=-\int_{\R^d}\psi^{\prime\prime\prime\prime}\pa{\frac{f(t,x)}{f_\infty(x)}}D^2(x)\abs{u(x)}^2\abs{v(x)}^2 f_\infty(x)dx$$
%$$-\int_{\R^d}\psi^{\prime\prime\prime}\pa{\frac{f(t,x)}{f_\infty(x)}}D(x) u(t,x)\cdot \nabla D(x) \abs{v(t,x)}^2 f_\infty(x)dx$$
%$$-2\int_{\R^d}\psi^{\prime\prime\prime}\pa{\frac{f(t,x)}{f_\infty(x)}}D(x)^2 v(t,x)\cdot J_x \pa{v(t,x)} u(t,x) f_\infty(x)dx,$$
where we have used the notation of $J_x\pa{v(x)}$ from the proof of Theorem \ref{thm:Fisher_decay}.\\
\amit{Since} 
$$\partial_t\pa{v(t,x)}=\nabla\pa{\frac{\partial_tg(t,x)}{f_\infty(x)}} =\nabla \pa{f_\infty^{-1}(x)\text{div}\pa{f_\infty(x)D(x)v(x)}}$$
%\nabla \pa{-D(x)v(x)\cdot \nabla \phi(x)+\nabla D(x)\cdot v(x)+D(x)\text{div}v(x)}
we see that 
\begin{equation}\label{eqapp:II}
	\begin{gathered}
	2\int_{\R^d}\psi^{\prime\prime}\pa{\frac{f(t,x)}{f_\infty(x)}}\partial_t v(t,x)\cdot  v(t,x)D(x)f_\infty(x)dx\\
	=2\int_{\R^d}\psi^{\prime\prime}\pa{\frac{f(t,x)}{f_\infty(x)}}\nabla \big[-D(x)v(t,x)\cdot \nabla \phi(x)\\
	\qquad \qquad \qquad \qquad \qquad \qquad +\nabla D(x)\cdot v(t,x)
		+D(x)\text{div}\pa{v(t,x)}\big] v(t,x)D(x)f_\infty(x)dx\\
	=2\int_{\R^d}\psi^{\prime\prime}\pa{\frac{f(t,x)}{f_\infty(x)}}D(x)f_\infty(x)v(t,x)\cdot\nabla \pa{D(x)\text{div}\pa{v(t,x)}}dx\\
	-2\int_{\R^d}\psi^{\prime\prime}\pa{\frac{f(t,x)}{f_\infty(x)}}D(x)f_\infty(x)v(t,x)^T\pa{J_x\pa{D(x)\nabla \phi(x)-\nabla D(x)}}v(t,x)dx\\
	-2\int_{\R^d}\psi^{\prime\prime}\pa{\frac{f(t,x)}{f_\infty(x)}}D(x)f_\infty(x)v(t,x)^T J_x\pa{v(t,x)}\pa{D(x)\nabla \phi(x)-\nabla D(x)}dx.
	\end{gathered}
\end{equation}

%where we have used the notation of $\nabla \otimes \nabla$ for the Hessian differentiation matrix $\br{\partial^2_{i,j}}_{i,j=1,\dots,d}$. 
Next, we notice that a simple \amit{calculation} shows that
$$\frac{1}{2}\Delta\pa{D(x)\abs{v(x)}^2}=\frac{1}{2}\Delta D(x) \abs{v(x)}^2 +2 \nabla D(x)\cdot  J_x\pa{v(x)}v(x)$$
$$+D(x)\sum_{i,j=1}^d \pa{\partial_i v_j(x)}^2+D(x)\sum_{i,j=1}^d v_j(x)\partial^2_{ii}v_j(x).$$
Recalling that $\partial_i v_j(x)=\partial_j v_i(x)$ we find that 
$$D(x)\sum_{i,j=1}^d v_j(x)\partial^2_{ii}v_j(x)=D(x)\sum_{i,j=1}^d v_j(x)\partial^2_{ij}v_i(x)$$
$$=D(x)\sum_{j=1}^d v_j(x)\partial_j\sum_{i=1}^d\partial_{i}v_i(x)=D(x)v(x)\cdot \nabla\text{div} \pa{v(x)},$$
and as such
$$D(x)v(x)\cdot \nabla\text{div} \pa{v(x)}=\frac{1}{2}\Delta\pa{D(x)\abs{v(x)}^2}-\frac{1}{2}\Delta D(x) \abs{v(x)}^2$$
$$ -2 \nabla D(x) \cdot J_x\pa{v(x)}v(x)-D(x)\sum_{i,j=1}^d \pa{\partial_i v_j(x)}^2.$$
This implies that 
\begin{equation}\label{eqapp:III}
	\begin{gathered}
		v(x)\cdot \nabla \pa{D(x)\text{div}\pa{v(x)}}=\text{div}\pa{v(x) }\pa{v(x) \cdot \nabla D(x)}+D(x) v(x) \cdot \nabla \text{div}\pa{v(x)}\\
		=\text{div}\pa{v(x)} \pa{v(x) \cdot \nabla D(x)}+\frac{1}{2}\Delta\pa{D(x)\abs{v(x)}^2}-\frac{1}{2}\Delta D(x) \abs{v(x)}^2\\
		-2 \nabla D(x) \cdot J_x\pa{v(x)}v(x)-D(x)\sum_{i,j=1}^d \pa{\partial_i v_j(x)}^2.
	\end{gathered}
\end{equation}
%$$v(x)\cdot \nabla \pa{D(x)\text{div}v(x)}=\text{div}v(x) \pa{v(x) \cdot \nabla D(x)}+D(x) v(x) \cdot \nabla \text{div}v(x)$$
%$$=\text{div}v(x) \pa{v(x) \cdot \nabla D(x)}+\frac{1}{2}\Delta\pa{D(x)\abs{v(x)}^2}-\frac{1}{2}\Delta D(x) \abs{v(x)}^2$$
%$$-2 \nabla D(x) \cdot J_x\pa{v(x)}v(x)-D(x)\sum_{i,j=1}^d \pa{\partial_i v_j(x)}^2.$$
On the other hand
$$\text{div}\pa{D(x)\nabla \pa{D(x)\abs{v(x)}^2}f_\infty(x)}=D(x)f_\infty(x) \Delta\pa{D(x)\abs{v(x)}^2}$$
$$+f_\infty(x)\pa{\nabla D(x)-D(x)\nabla \phi(x)}\cdot \nabla\pa{D(x)\abs{v(x)}^2}$$
$$=D(x)f_\infty(x) \Delta\pa{D(x)\abs{v(x)}^2}+f_\infty(x)\pa{\nabla D(x)-D(x)\nabla \phi(x)}\cdot \nabla D(x) \abs{v(x)}^2$$
$$+2D(x)f_\infty(x)\pa{\nabla D(x)-D(x)\nabla \phi(x)}^T J_x\pa{v(x)}v(x)$$
so that
$$D(x)f_\infty(x)\big[\Delta\pa{D(x)\abs{v(x)}^2}+2\pa{\nabla D(x)-D(x)\nabla \phi(x)} J_x\pa{v(x)}v(x)\big]$$
$$=\text{div}\pa{D(x)\nabla \pa{D(x)\abs{v(x)}^2}f_\infty(x)}-f_\infty(x)\pa{\nabla D(x)-D(x)\nabla \phi(x)}\cdot \nabla D(x) \abs{v(x)}^2.$$
Using the above with \eqref{eqapp:III} we see that
\begin{equation}\label{eqapp:IV}
	\begin{gathered}
		2D(x)f_\infty(x)\pa{v(x)\cdot \nabla \pa{D(x)\text{div}\pa{v(x)}}+\pa{\nabla D(x)-D(x)\nabla \phi(x)}^T J_x\pa{v(x)}v(x)}\\
		=D(x)f_\infty(x)\pa{\Delta\pa{D(x)\abs{v(x)}^2}+2\pa{\nabla D(x)-D(x)\nabla \phi(x)}^T J_x\pa{v(x)}v(x)}\\
		+2D(x)f_\infty(x)\text{div}\pa{v(x)} \pa{v(x) \cdot \nabla D(x)}-D(x)f_\infty(x)\Delta D(x) \abs{v(x)}^2\\
		-4D(x)f_\infty(x)\nabla D(x) \cdot\left( J_x\pa{v(x)}v(x)\right)-2D(x)^2 f_\infty(x)\sum_{i,j=1}^d \pa{\partial_i v_j(x)}^2\\
		=\text{div}\pa{D(x)\nabla \pa{D(x)\abs{v(x)}^2}f_\infty(x)}-f_\infty(x)\pa{\nabla D(x)-D(x)\nabla \phi(x)} \nabla D(x) \abs{v(x)}^2\\
		+2D(x)f_\infty(x)\text{div}\pa{v(x)} \pa{v(x) \cdot \nabla D(x)}-D(x)f_\infty(x)\Delta D(x) \abs{v(x)}^2\\
		-4D(x)f_\infty(x)\nabla D(x) \cdot (J_x\pa{v(x)}v(x))-2D(x)^2 f_\infty(x)\sum_{i,j=1}^d \pa{\partial_i v_j(x)}^2.
	\end{gathered}
\end{equation}
%$$2D(x)f_\infty(x)\pa{v(x)\cdot \nabla \pa{D(x)\text{div}v(x)}+\pa{\nabla D(x)-D(x)\nabla \phi(x)} J_x\pa{v(x)}v(x)}$$
%$$=D(x)f_\infty(x)\pa{\Delta\pa{D(x)\abs{v(x)}^2}+2\pa{\nabla D(x)-D(x)\nabla \phi(x)} J_x\pa{v(x)}v(x)}$$
%$$+2D(x)f_\infty(x)\text{div}v(x) \pa{v(x) \cdot \nabla D(x)}-D(x)f_\infty(x)\Delta D(x) \abs{v(x)}^2$$
%$$-4D(x)f_\infty(x)\nabla D(x) \cdot J_x\pa{v(x)}v(x)-2D(x)^2 f_\infty(x)\sum_{i,j=1}^d \pa{\partial_i v_j(x)}^2.$$
%$$=\text{div}\pa{D(x)\nabla \pa{D(x)\abs{v(x)}^2}f_\infty(x)}-f_\infty(x)\pa{\nabla D(x)-D(x)\nabla \phi(x)} \nabla D(x) \abs{v(x)}^2$$
%$$+2D(x)f_\infty(x)\text{div}v(x) \pa{v(x) \cdot \nabla D(x)}-D(x)f_\infty(x)\Delta D(x) \abs{v(x)}^2$$
%$$-4D(x)f_\infty(x)\nabla D(x) \cdot J_x\pa{v(x)}v(x)-2D(x)^2 f_\infty(x)\sum_{i,j=1}^d \pa{\partial_i v_j(x)}^2.$$
Combining  \eqref{eqapp:II} and \eqref{eqapp:IV} with 
$$\int_{\R^d}\psi^{\prime\prime}\pa{\frac{f(x)}{f_\infty(x)}}\text{div}\pa{D(x)\nabla \pa{D(x)\abs{v(x)}^2}f_\infty(x)}dx$$
$$=-\int_{\R^d}\psi^{\prime\prime\prime}\pa{\frac{f(x)}{f_\infty(x)}}u(x)\cdot \pa{D(x)\nabla \pa{D(x)\abs{v(x)}^2}f_\infty(x)}dx$$
$$=-\int_{\R^d}\psi^{\prime\prime\prime}\pa{\frac{f(x)}{f_\infty(x)}}\pa{D(x)^2 u(x)\cdot \nabla \pa{\abs{v(x)}^2}+D(x)\abs{v(x)}^2 u(x)\cdot \nabla D(x)}dx$$
and adding and subtracting the term $\frac{d-2}{4}\pa{v(x)\cdot \nabla D(x)}^2$ gives us 
\begin{equation}\label{eqapp:V}
	\begin{gathered}
		2\int_{\R^d}\psi^{\prime\prime}\pa{\frac{f(t,x)}{f_\infty(x)}}\partial_t v(t,x)\cdot  v(t,x)D(x)f_\infty(x)dx\\
		=-\int_{\R^d}\psi^{\prime\prime\prime}\pa{\frac{f(t,x)}{f_\infty(x)}}\pa{D(x)^2 u(t,x)\cdot \nabla \pa{\abs{v(t,x)}^2}+D(x)\abs{v(t,x)}^2 u(t,x)\cdot \nabla D(x)}dx\\
		-2\int_{\R^d}\psi^{\prime\prime}\pa{\frac{f(t,x)}{f_\infty(x)}}D(x)f_\infty(x)\Bigg[v(t,x)^T\pa{J_x\pa{D(x)\nabla \phi(x)-\nabla D(x)}}v(t,x)\\
		+\frac{1}{2}\Delta D(x) \abs{v(t,x)}^2-\frac{1}{2}\abs{v(t,x)}^2 \nabla D(x)\cdot \nabla \phi(x)-\frac{1}{D(x)}\frac{d-2}{4}\pa{v(t,x)\cdot \nabla D(x)}^2\Bigg] dx\\
		-2\int_{\R^d}\psi^{\prime\prime}\pa{\frac{f(t,x)}{f_\infty(x)}}f_\infty(x)\Bigg[\frac{d-2}{4}\pa{v(t,x)\cdot  \nabla D(x)}^2 -D(x)\pa{v(t,x)\cdot \nabla D(x)}\text{div}\pa{v(t,x)}\\
		+2D(x)\nabla D(x) (J_x\pa{v(t,x)}v(t,x))+D^2(x)\sum_{i,j=1}^d\pa{\partial_i v_j(t,x)}^2+\frac{1}{2}\abs{v(t,x)}^2 \abs{\nabla D (x)}^2\Bigg] dx.
	\end{gathered}
\end{equation}
%$$2\int_{\R^d}\psi^{\prime\prime}\pa{\frac{f(t,x)}{f_\infty(x)}}\partial_t v(t,x)\cdot  v(t,x)D(x)f_\infty(x)dx$$
%$$=-\int_{\R^d}\psi^{\prime\prime\prime}\pa{\frac{f(x)}{f_\infty(x)}}\pa{D(x)^2 u(x)\cdot \nabla \pa{\abs{v(x)}^2}+D(x)\abs{v(x)}^2 u(x)\cdot \nabla D(x)}dx$$
%$$-2\int_{\R^d}\psi^{\prime\prime}\pa{\frac{f(t,x)}{f_\infty(x)}}D(x)f_\infty(x)\Bigg[v(t,x)\cdot\pa{\nabla \otimes\pa{D(x)\nabla \phi(x)-\nabla D(x)}}v(t,x)$$
%$$+\frac{1}{2}\Delta D(x) \abs{v(t,x)}^2-\frac{1}{2}\abs{v(t,x)}^2 \nabla D(x)\cdot \nabla \phi(x)-\frac{1}{D(x)}\frac{d-2}{4}\pa{v(t,x)\cdot \nabla D(x)}^2\Bigg] dx$$
%$$-2\int_{\R^d}\psi^{\prime\prime}\pa{\frac{f(t,x)}{f_\infty(x)}}f_\infty(x)\Bigg[\frac{d-2}{4}\pa{v(t,x)\cdot \nabla D(x)}^2 -D(x)\pa{v(t,x)\cdot \nabla D(x)}\text{div}v(t,x)$$
%$$+2D(x)\nabla D(x) J_x\pa{v(t,x)}v(t,x)+D^2(x)\sum_{i,j=1}^d\pa{\partial_i v(t,x)}^2+\frac{1}{2}\abs{v(t,x)}^2 \abs{\nabla D (x)}^2\Bigg] dx$$
\amit{Since}
$$\Bigg[\frac{d-2}{4}\pa{v(t,x)\cdot \nabla D(x)}^2 -D(x)\pa{v(t,x)\cdot \nabla D(x)}\text{div}\pa{v(t,x)}$$
$$+2D(x)\nabla D(x) J_x\pa{v(t,x)}v(t,x)+D^2(x)\sum_{i,j=1}^d\pa{\partial_i v_j(t,x)}^2+\frac{1}{2}\abs{v(t,x)}^2 \abs{\nabla D (x)}^2\Bigg]$$
$$=\sum_{i,j=1}^d\pa{D(x)\partial_i v_j(t,x)+\frac{1}{2}\partial_i D(x) v_j(t,x)+\frac{1}{2}v_i(t,x)\partial_j D(x)-\frac{1}{2}\delta_{i,j}\pa{\nabla D(x)\cdot v(t,x)}}^2$$
and
$$\Bigg[v(t,x)^T\pa{J_x\pa{D(x)\nabla \phi(x)-\nabla D(x)}}v(t,x)$$
$$+\frac{1}{2}\Delta D(x) \abs{v(t,x)}^2-\frac{1}{2}\abs{v(t,x)}^2 \nabla D(x)\cdot \nabla \phi(x)-\frac{1}{D(x)}\frac{d-2}{4}\pa{v(t,x)\cdot \nabla D(x)}^2\Bigg] $$
$$=v(t,x)^T \Bigg[ \frac{1}{D(x)}\frac{2-d}{4}\nabla D(x) \otimes \nabla D(x)+\frac{1}{2}\pa{\Delta D(x)-\nabla D(x)\cdot \nabla \phi(x)}$$
$$+D(x)\hess(\phi)(x)+\underbrace{\frac{\nabla \phi(x) \otimes \nabla D(x)+\nabla D(x) \otimes \nabla \phi(x)}{2}}_{\pa{\nabla D(x)\otimes \nabla \phi(x)}_{s}}-\hess(D)(x)\Bigg]v(t,x)$$
 %%\footnote{We used the fact that $\nabla \otimes\pa{D(x) \nabla \phi(x)}=\nabla D(x) \otimes \nabla \phi(x)+D(x)\nabla \otimes \nabla \phi(x)$ and $v(x) \cdot \nabla F(x) \otimes \nabla G(x) v(x)=v(x) \cdot \nabla G(x) \otimes \nabla F(x) v(x)= \pa{\nabla F(x) \cdot G(x)}\pa{\nabla \phi(x)\cdot v(x)}$.}$$
we can use condition \eqref{A1} to see that \eqref{eqapp:V} implies that
\begin{equation}\label{eqapp:VI}
	\begin{gathered}
		2\int_{\R^d}\psi^{\prime\prime}\pa{\frac{f(t,x)}{f_\infty(x)}}\partial_t v(t,x)\cdot  v(t,x)D(x)f_\infty(x)dx\\
		\leq -\int_{\R^d}\psi^{\prime\prime\prime}\pa{\frac{f(t,x)}{f_\infty(x)}}\pa{D(x)^2 u(t,x)\cdot \nabla \pa{\abs{v(t,x)}^2}+D(x)\abs{v(t,x)}^2 u(t,x)\cdot \nabla D(x)}dx\\
		-2\lambda_1\underbrace{\int_{\R^d}\psi^{\prime\prime}\pa{\frac{f(t,x)}{f_\infty(x)}}D(x)f_\infty(x)\abs{v(t,x)}^2 dx}_{ \I_{\psi}\pa{f(t),g(t)|f_\infty}}\\
		-2\sum_{i,j=1}^d\int_{\R^d}\psi^{\prime\prime}\pa{\frac{f(t,x)}{f_\infty(x)}}f_\infty(x)\Bigg(D(x)\partial_i v_j(t,x)+\frac{1}{2}\partial_i D(x) v_j(t,x)\\
		+\frac{1}{2}v_i(t,x)\partial_j D(x)-\frac{1}{2}\delta_{i,j}\pa{\nabla D(x)\cdot v(t,x)}\Bigg)^2 dx.
	\end{gathered}
\end{equation}
% $$2\int_{\R^d}\psi^{\prime\prime}\pa{\frac{f(t,x)}{f_\infty(x)}}\partial_t v(t,x)\cdot  v(t,x)D(x)f_\infty(x)dx$$
%$$\leq -\int_{\R^d}\psi^{\prime\prime\prime}\pa{\frac{f(x)}{f_\infty(x)}}\pa{D(x)^2 u(x)\cdot \nabla \pa{\abs{v(x)}^2}+D(x)\abs{v(x)}^2 u(x)\cdot \nabla D(x)}dx$$
%$$-2\lambda_1\underbrace{\int_{\R^d}\psi^{\prime\prime}\pa{\frac{f(t,x)}{f_\infty(x)}}D(x)f_\infty(x)\abs{v(t,x)}^2 dx}_{ \I_{\psi}\pa{f(t),g(t)|f_\infty}}$$
%$$-2\sum_{i,j=1}^d\int_{\R^d}\psi^{\prime\prime}\pa{\frac{f(t,x)}{f_\infty(x)}}f_\infty(x)\Bigg(D(x)\partial_i v_j(x)+\frac{1}{2}\partial_i D(x) v_j(x)$$
%$$+\frac{1}{2}v_i(x)\partial_j D(x)-\frac{1}{2}\delta_{i,j}\pa{\nabla D(x)\cdot v(x)}\Bigg)^2 dx.$$
Plugging \eqref{eqapp:I} and \eqref{eqapp:VI} in \eqref{eqapp:0}, and using the fact that $\nabla \pa{\abs{v(t,x)}^2}=2J_x\pa{v(t,x)} \cdot v(t,x)$, we conclude that 
$$\frac{d}{dt}\I_\psi\pa{f(t),g(t)|f_\infty} \leq -2\lambda_1 \I_{\psi}\pa{f(t),g(t)|f_\infty}$$
$$-\int_{\R^d}\psi^{\prime\prime\prime\prime}\pa{\frac{f(t,x)}{f_\infty(x)}}D^2(x)\abs{u(t,x)}^2\abs{v(t,x)}^2 f_\infty(x)dx$$
$$-2\int_{\R^d}\psi^{\prime\prime\prime}\pa{\frac{f(t,x)}{f_\infty(x)}}D(x) u(t,x)\cdot \nabla D(x) \abs{v(t,x)}^2 f_\infty(x)dx$$
$$-4\int_{\R^d}\psi^{\prime\prime\prime}\pa{\frac{f(t,x)}{f_\infty(x)}}D(x)^2 u(t,x)^T  J_x \pa{v(t,x)} v(t,x) f_\infty(x)dx$$
$$-2\sum_{i,j=1}^d\int_{\R^d}\psi^{\prime\prime}\pa{\frac{f(t,x)}{f_\infty(x)}}f_\infty(x)\Bigg(D(x)\partial_i v_j(t,x)+\frac{1}{2}\partial_i D(x) v_j(t,x)$$
$$+\frac{1}{2}v_i(t,x)\partial_j D(x)-\frac{1}{2}\delta_{i,j}\pa{\nabla D(x)\cdot v(t,x)}\Bigg)^2 dx.$$
Defining the matrices 
$$\X:=\pa{\begin{tabular}{cc}
$2\psi^{\prime\prime}\pa{\frac{f(x)}{f_\infty(x)}}$ & $2\psi^{\prime\prime\prime}\pa{\frac{f(x)}{f_\infty(x)}}$ \\
$2\psi^{\prime\prime\prime}\pa{\frac{f(x)}{f_\infty(x)}}$ & $\psi^{\prime\prime\prime\prime}\pa{\frac{f(x)}{f_\infty(x)}}$
\end{tabular}}$$
and
$$\Y:=\pa{\begin{tabular}{cc}
$\alpha(x)$ & $\beta(x)$\\
$\beta(x)$ & $\gamma(x)$
\end{tabular}
}$$
where
$$\alpha(x):=\sum_{i,j=1}^d \pa{D(x)\partial_i v_j(x)+\frac{1}{2}\partial_i D(x) v_j(x)+\frac{1}{2}v_i(x)\partial_j D(x)-\frac{1}{2}\delta_{i,j}\pa{\nabla D(x)\cdot v(x)}}^2,$$
$$\beta(x):=D(x)^2 u(x)^T J_x\pa{v(x)}v(x)+\frac{1}{2}D(x)\abs{v(x)}^2\pa{\nabla D(x)\cdot u(x)},$$
$$\gamma(x):=D(x)^2 \abs{u(x)}^2\abs{v(x)}^2$$
we find that 
$$\frac{d}{dt}\I_\psi\pa{f(t),g(t)|f_\infty} \leq -2\lambda_1 \I_{\psi}\pa{f(t),g(t)|f_\infty} - \int_{\R^d}\tr\pa{\X(t)\Y(t)}f_\infty(x)dx.$$
\amit{Proving} that $\X$ and $\Y$ are non-negative definite will imply that
$$\frac{d}{dt}\I_\psi\pa{f(t),g(t)|f_\infty} \leq -2\lambda_1 \I_{\psi}\pa{f(t),g(t)|f_\infty},$$
which will show the desired result.\\
The fact that $\X$ is non-negative definite is exactly as in our proof of Theorem \ref{thm:Fisher_decay} and follows from the fact that $\psi$ is a generating function of an admissible relative entropy.\\
The structure of $\Y$ is more complicated than what we've dealt with in our previous proof, as well as more complicated than the settings of \cite{AMTU01}. Inspired by  Lemma 2.3 in \cite{ArCaJuL08} we define
$${\bf Z}={\bf Z}^T:=D(x)J_x\pa{v(x)}+\frac{1}{2}\nabla D(x)\otimes v(x) + \frac{1}{2}v(x)\otimes\nabla D(x)-\frac{\nabla D(x)\cdot v(x)}{2}\II,$$
and
$${\bf W}:=D(x)u(x)\otimes v(x).$$
A straight forward computation shows that
$$\tr{\pa{{\bf Z}^2}}=\tr{\pa{{\bf Z} {\bf{Z}}^T}}=\alpha(x),\quad \tr{\pa{{\bf W} {\bf{W}}^T}}=\gamma(x)$$ 
%D(x)^2\abs{v(x)}^2\abs{u(x)}^2.$$
and
$$\tr{\pa{{\bf{W}}{\bf{Z}}^T}}=\tr{\pa{{\bf{W}}{\bf{Z}}}}=\beta(x).$$
%$$\tr{\pa{{\bf{W}}{\bf{Z}}^T}}=\tr{\pa{{\bf{W}}{\bf{Z}}}}=D^2(x) u(x)\cdot J_x\pa{v(x)}v(x)+\frac{1}{2}D(x)\abs{v(x)}^2\pa{u(x)\cdot \nabla D(x)}=\beta(x).$$
%$$+\frac{1}{2}D(x)u(x)\cdot v(x)\pa{v(x)\cdot \nabla D(x)}-\frac{1}{2}D(x)u(x)\cdot v(x)\pa{v(x)\cdot \nabla D(x)}=\beta(x).$$
We can conclude that
$$\Y=\pa{\begin{tabular}{cc}
$\tr{\pa{{\bf Z}{\bf Z}^T}}$ & $\tr{\pa{{\bf Z}{\bf W}^T}}$\\
$\tr{\pa{{\bf W}{\bf Z}^T}}$ & $\tr{\pa{{\bf W}{\bf W}^T}}$
\end{tabular}}.$$
Since
$$\pa{\begin{tabular}{cc}
${\bf Z}{\bf Z}^T$ & ${\bf Z}{\bf W}^T$\\
${\bf W}{\bf Z}^T$ & ${\bf W}{\bf W}^T$
\end{tabular}} \geq 0$$
we use the positivity of the partial traces result from \cite{ArCaJuL08} (Lemma 2.4) and conclude that $\Y$ is non-negative. The proof is now complete.
\end{proof}

\section{Additional Proofs}\label{app:b}
\amit{This} Appendix \amit{includes} additional proofs which were deferred in order to assist in the flow of the presented work. Amongst other proofs, we will provide proofs for Theorem \ref{thm:interpolation_general} and Lemma \ref{lem:hypercontractivity}.\\ 
%and Lemma \ref{lem:mixed_guassian_integration}. \\

To prove Theorem \ref{thm:interpolation_general} we require the following lemma, which is a generalisation of formula (2.21) from \cite{AMTU01}:
\begin{lemma}\label{lem:propertiesof_g}
	Let $\psi\in C^4(\R_+)$ be a strictly convex function such that $\psi^{\prime\prime}$ has a singularity at zero and $\frac{1}{\psi^{\prime\prime}}$ is concave\footnote{This requirement is equivalent to condition \eqref{eq:psicondition}.}. Then, for any $R>0$ 
	\begin{equation}\label{eqapp:propertiesof_g}
		\psi^{\prime\prime}(y) \leq \begin{cases}
			\frac{R\psi^{\prime\prime}(R)}{y} & 0<y<R \\
			\psi^{\prime\prime}(R) & y\geq R.
		\end{cases}
	\end{equation}
In particular for any fixed $R>0$ and $y\in\R_+$
\begin{equation}\label{eqapp:psi_bound}
	\psi^{\prime\prime}(y) \leq 
		\frac{R\psi^{\prime\prime}(R)}{y} +
		\psi^{\prime\prime}(R).
\end{equation}
\end{lemma}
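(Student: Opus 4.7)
The plan is to reduce everything to properties of the auxiliary function $h := 1/\psi''$, which by hypothesis is concave and positive on $\R_+$. The singularity of $\psi''$ at the origin forces $h(0^+) = 0$, so $h$ is a concave function on $[0,\infty)$ vanishing at $0$ and strictly positive on $(0,\infty)$.

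My first step would be to observe that such an $h$ must be non-decreasing. Indeed, for a concave function on $[0,\infty)$ the right derivative $h'_+$ exists and is non-increasing; if it were strictly negative at some point $y_0$, then $h'_+$ would stay negative on $[y_0,\infty)$ and $h$ would be dominated by an affine function with negative slope, contradicting $h > 0$ on $(0,\infty)$. Hence $h$ is non-decreasing, which is equivalent to $\psi''$ being non-increasing on $\R_+$. This immediately yields the bound $\psi''(y) \leq \psi''(R)$ for $y \geq R$, which is the second branch of \eqref{eqapp:propertiesof_g}.

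Next, for the range $0 < y < R$, the inequality $\psi''(y) \leq R\psi''(R)/y$ is equivalent to $h(y)/y \geq h(R)/R$. This is the standard fact that the chord slope $h(y)/y$ of a concave function vanishing at $0$ is a non-increasing function of $y$: for $0 < y < R$, writing $y = (y/R) R + (1 - y/R)\cdot 0$ and applying concavity together with $h(0) = 0$ gives $h(y) \geq (y/R) h(R)$, exactly the desired inequality. This handles the first branch of \eqref{eqapp:propertiesof_g}.

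Finally, inequality \eqref{eqapp:psi_bound} is a direct consequence of \eqref{eqapp:propertiesof_g}: on each of the two regions, one of the two non-negative terms $R\psi''(R)/y$ and $\psi''(R)$ already dominates $\psi''(y)$, so their sum certainly does. The only subtlety I anticipate is verifying the concavity-to-monotonicity argument cleanly without assuming differentiability of $h$; using one-sided derivatives (which always exist for concave functions) sidesteps this, and is the one point where the proof is slightly more than a direct substitution.
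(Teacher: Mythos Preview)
Your proof is correct and follows essentially the same approach as the paper's: define $h=1/\psi''$ (the paper calls it $\phi$), use positivity together with concavity to deduce that $h$ is non-decreasing, and then use the chord inequality $h(y)\geq (y/R)h(R)$ from concavity and $h(0)=0$. Your remark about one-sided derivatives is unnecessary here since $\psi\in C^4(\R_+)$ makes $h$ twice differentiable on $\R_+$, but it does no harm.
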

\begin{proof}
	We start by defining the function 
	$$\phi(y):=\begin{cases}\frac{1}{\psi^{\prime\prime}(y)} & y>0, \\ 0 & y=0.\end{cases}$$ 
	The conditions on $\psi$ ensure that $\phi$ is twice differentiable on $\R_+$ and continuous on $[0,\infty)$. Moreover, as $\phi$ is concave due to our assumptions we see that for any $y_0>0$ 
	\begin{equation}\label{eq:concavity}
		\phi(y) \leq \phi(y_0)+\phi^\prime(y_0)(y-y_0),\qquad \forall y\geq 0.
	\end{equation}
	Since $\phi(y)>0$ for all $y>0$, we conclude that we must have that $\phi^\prime(y) \geq 0$ for all $y>0$ \footnote{Indeed, if there exists $y_0>0$ such that $\phi^\prime(y_0)<0$ then for any $\epsilon>0$
		$$\phi\pa{y_0+\epsilon-\frac{\phi\pa{y_0}}{\phi^{\prime}\pa{y_0}}} \leq \phi\pa{y_0}+\phi^\prime(y_0) \pa{\epsilon- \frac{\phi\pa{y_0}}{\phi^\prime\pa{y_0}}}=\epsilon \phi^\prime\pa{y_0}<0.$$}. Thus, $\phi$ is non-decreasing and we have that for any $y\geq R$ 
	$$\psi^{\prime\prime}(y)=\frac{1}{\phi(y)} \leq \frac{1}{\phi(R)}=\psi^{\prime\prime}(R).$$
	 To show the bound on $\psi^{\prime\prime}(y)$ for $0<y<R$ we notice that due to the concavity of $\phi$
	$$\phi(y)=\phi\pa{\frac{y}{R}R+\frac{R-y}{R}\cdot 0} \geq \frac{y}{R}\phi(R) + \pa{1-\frac{y}{R}}\phi(0)= \frac{y}{R}\phi(R),$$
	 for any $y<R$. The proof is thus complete.
\end{proof}
\begin{proof}[Proof of Theorem \ref{thm:interpolation_general}]
	Using Lemma \ref{lem:propertiesof_g} we find that for any $R>0$
	$$\mathcal{I}^{\bf{P}}_{\psi}\pa{f,g|f_\infty} \leq 
	R\psi^{\prime\prime}(R)\mathcal{I}^{\bf{P}}_{1}(f,g|f_\infty)+\psi^{\prime\prime}(R)I^{\bf{P}}_2(g|f_\infty).$$
	Choosing $R=\frac{I^{\bf{P}}_2(g|f_\infty)}{\mathcal{I}^{\bf{P}}_{1}(f,g|f_\infty)}$ and using \eqref{eq:growthofpsiprimeprime} we find that
	\begin{equation}\nonumber
		\mathcal{I}^{\bf{P}}_{\psi}\pa{f,g|f_\infty}\leq 2\psi^{\prime\prime}\pa{\frac{I^{\bf{P}}_2(g|f_\infty)}{\mathcal{I}^{\bf{P}}_{1}(f,g|f_\infty)}}I_2^{\PP}\pa{g|f_\infty} \leq 2\pa{c_1\pa{\frac{I^{\bf{P}}_2(g|f_\infty)}{\mathcal{I}^{\bf{P}}_{1}(f,g|f_\infty)}}^{-\alpha}+ c_2}I^{\bf{P}}_2(g|f_\infty),
	\end{equation}
	which concludes the proof.
\end{proof}
Next we consider the proof of Lemma \ref{lem:hyper_conditions}:
\begin{proof}[Proof of Lemma \ref{lem:hyper_conditions}]
	From the definition of $\psi_p$ for $1<p\leq 2$ we have that\footnote{Note that the mass of $f_\infty$ is one, but we don't know anything about the mass of $\abs{f}$.}
	\begin{equation}\label{eq:e_p_controls_psi_p_of_norm}
		\begin{gathered}
			e_p\pa{\abs{f}|f_\infty}=\frac{1}{p\pa{p-1}}\pa{\norm{f}_{L^p\pa{\R^d,f_\infty^{1-p}}}^p-p\pa{\norm{f}_{L^1\pa{\R^d}}-1}-1}\\
			\geq \frac{1}{p\pa{p-1}}\pa{\norm{f}_{L^p\pa{\R^d,f_\infty^{1-p}}}^p-p\pa{\norm{f}_{L^p\pa{\R^d,f_\infty^{1-p}}}-1}-1}\\
			=\psi_p\pa{\norm{f}_{L^p\pa{\R^d,f_\infty^{1-p}}}},	
		\end{gathered}
	\end{equation}
	where we have used the fact that
	\begin{equation}\label{eq:exponential_controls_L_1}
		\begin{gathered}
			\int_{\R^d}\abs{f(x)}dx=\int_{\R^d}\pa{\abs{f(x)}e^{\frac{p-1}{2p}\abs{x}^2}}e^{-\frac{p-1}{2p}\abs{x}^2}dx\\
			\leq \pa{2\pi}^{\frac{d(p-1)}{2p}}\pa{\int_{\R^d}\abs{f(x)}^p e^{\frac{(p-1)\abs{x}^2}{2}}dx}^{\frac{1}{p}}=\norm{f}_{L^p\pa{\R^d,f_\infty^{1-p}}}.
		\end{gathered}
	\end{equation}
	It is straight forward to verify that if $y \geq \pa{2p}^{\frac{1}{p-1}}$ then for any $\alpha>0$
	$$\psi_p\pa{y} \leq \alpha \quad \Rightarrow \quad y^p \leq p(p-1)\alpha +1 +p\pa{y-1} \leq  p(p-1)\alpha +1 + \frac{y^p}{2}$$
	$$\Rightarrow y\leq 2^{\frac{1}{p}}\pa{p(p-1)\alpha +1}^{\frac{1}{p}}.$$
	Thus
	$$\psi_p(y) \leq \alpha\quad \Rightarrow \quad y\leq \max \pa{\pa{2p}^{\frac{1}{p-1}},2^{\frac{1}{p}}\pa{p(p-1)\alpha +1}^{\frac{1}{p}}} \leq \pa{2p}^{\frac{1}{p-1}}\pa{p(p-1)\alpha +1}^{\frac{1}{p}}.$$
	Using the above with \eqref{eq:e_p_controls_psi_p_of_norm} and $\alpha:=e_p\pa{\abs{f} |f_\infty}$ yields the estimation
	\begin{equation}\label{eq:norm_bound_by_e_p}
		\norm{f}_{L^p\pa{\R^d,f_{\infty}^{1-p}}} \leq \pa{2p}^{\frac{1}{p-1}}\pa{p(p-1)e_p\pa{\abs{f}|f_\infty}+1}^{\frac{1}{p}}.
	\end{equation}
	Noticing that for any $\epsilon>0$ we have that
	\begin{equation}\label{eq:estimate_of_eps_exponential_moment}
		\begin{gathered}
			\int_{\R^d}e^{\epsilon \abs{x}^2}\abs{f(x)}dx =\int_{\R^d}e^{-\frac{\pa{p-1}\epsilon}{p} \abs{x}^2}e^{\frac{(2p-1)\epsilon}{p}\abs{x}^2}\abs{f(x)}dx  \\
			\leq \pa{\int_{\R^d}e^{-\epsilon\abs{x}^2}dx}^{\frac{p-1}{p}}\pa{\int_{\R^d}e^{(2p-1)\epsilon \abs{x}^2}\abs{f(x)}^pdx}^{\frac{1}{p}}\\
			=\pa{\frac{\pi}{\epsilon}}^{\frac{d(p-1)}{2p}}\pa{\int_{\R^d}e^{(2p-1)\epsilon \abs{x}^2}\abs{f(x)}^pdx}^{\frac{1}{p}}\\
			=\pa{\frac{\pi}{\epsilon}}^{\frac{d(p-1)}{2p}}\pa{2\pi}^{-\frac{d\pa{2p-1}\epsilon }{p}}\norm{f}_{L^p\pa{\R^d,f_\infty^{-2\pa{2p-1}\epsilon}}},
		\end{gathered}
	\end{equation}
	we attain \eqref{eq:exp_estimation_by_norm_general_function} by choosing $\epsilon=\frac{p-1}{2\pa{2p-1}}$ and using \eqref{eq:norm_bound_by_e_p}. \\
	To show \eqref{eq:exp_estimation_by_entropy_with_unit_mass} we notice that in this particular case \eqref{eq:e_p_controls_psi_p_of_norm} can be replaced with
	\begin{equation}\nonumber
		\begin{gathered}
			e_p\pa{\abs{f}|f_\infty}= %\frac{1}{p\pa{p-1}}\pa{\norm{f}_{L^p\pa{\R^d,f_\infty^{1-p}}}^p-p\pa{\norm{f}_{L^1\pa{\R^d}}-1}-1}\\
			\frac{1}{p\pa{p-1}}\pa{\norm{f}_{L^p\pa{\R^d,f_\infty^{1-p}}}^p-1}
		\end{gathered}
	\end{equation}
	or equivalently 
	\begin{equation}\nonumber
		\norm{f}_{L^p\pa{\R^d,f_{\infty}^{1-p}}} = \pa{p(p-1)e_p\pa{\abs{f}|f_\infty}+1}^{\frac{1}{p}}.
	\end{equation} 
	Choosing $\epsilon=\frac{p-1}{2\pa{2p-1}}$ in \eqref{eq:estimate_of_eps_exponential_moment} with the above equality yields the desired result.
\end{proof}

Lastly, we focus our attention on the proof of Lemma \ref{lem:hypercontractivity}. 
%The following simple technical lemma will be used during the proof:
%\begin{lemma}\label{lem:mixed_guassian_integration}
%	For any $\eta>\gamma>0$ we have that
%	\begin{equation}\label{eq:convulotion_like_computation}
%		\int_{\R^d}e^{-\eta\abs{x-y}^2} e^{\gamma\abs{x}^2}dx=\pa{\frac{\pi}{\eta-\gamma}}^{\frac{d}{2}}e^{\frac{\eta\gamma}{\eta-\gamma}\abs{y}^2}.
%	\end{equation} 
%\end{lemma}
%\begin{proof}
%	Since
%	$$e^{-\eta\abs{x-y}^2} e^{\gamma\abs{x}^2}=e^{-\pa{\eta-\gamma}\abs{x-\frac{\eta}{\eta-\gamma}y}^2}e^{\frac{\eta^2}{\eta-\gamma}\abs{y}^2}e^{-\eta \abs{y}^2}$$
%	$$=e^{-\pa{\eta-\gamma}\abs{x-\frac{\eta}{\eta-\gamma}y}^2}e^{\frac{\eta\gamma}{\eta-\gamma}\abs{y}^2},$$
%	we have that 
%	$$\int_{\R^d}e^{-\eta\abs{x-y}^2} e^{\gamma\abs{x}^2}dx=e^{\frac{\eta\gamma}{\eta-\gamma}\abs{y}^2}\int_{\R^d}e^{-\pa{\eta-\gamma}\abs{\xi}^2}d\xi$$
%	which shows the desired result.
%\end{proof}
\begin{proof}[Proof of Lemma \ref{lem:hypercontractivity}]
	Starting with \eqref{eq:exactsolution} we find that 
	$$\abs{g\pa{t,x} } \leq \frac{1}{\pa{2\pi}^{\frac{d}{2}}\sqrt{\det \W(t)}}\int_{\R^d}e^{-\frac{1}{2}\pa{x-e^{-\C t}y}^T \W(t)^{-1}\pa{x-e^{-\C t}y}} \abs{g_0(y)}dy,$$
	where the left hand side can be identified as the solution for the Fokker-Planck equation \eqref{eq:fokkerplanck} with initial datum $\abs{g_0}$. From this point the proof of the $L^2-$bound follows exactly like part $(i)$ of Theorem 4.3 of \cite{AEW18}\footnote{In fact, denoting by $e^{Lt}$ the Fokker-Planck semigroup we have that 
$$\norm{g(t)}_{L^2\pa{\R^d,f_\infty^{-1}}}\leq \norm{e^{Lt}\abs{g_0}}_{L^2\pa{\R^d,f_\infty^{-1}}}\leq \pa{\frac{8}{3}}^d\pa{\int_{\R^d}e^{\epsilon \abs{x}^2}\abs{g_0(x)}dx}^2$$
according to part $(i)$ of Theorem 4.3 of \cite{AEW18}.}. Consequently, we will only focus on the bound on the Fisher information. \\
	Given a fixed $\epsilon_1>0$, to be chosen later, consider $t\geq \widetilde{t}_1\pa{\epsilon_1}:=\max\pa{\widetilde{t}\pa{\epsilon_1},\widehat{t}\pa{\epsilon_1}}$, where $\widetilde{t}\pa{\epsilon_1}$ and $\widehat{t}\pa{\epsilon_1}$ are as those given in Lemma \ref{lem:proved_propertires_for_hyper_hypo}. Since
	$$\nabla \pa{\frac{g(\amit{t,x})}{f_\infty(x)}}=\frac{\nabla g(\amit{t,x})+xg(\amit{t,x})}{f_\infty(x)}$$
	we combine \eqref{eq:exactsolution},\eqref{eq:exactsolution_grad} together with \eqref{eq:hyperproof0}, \eqref{eq:hyperproofI}, and \eqref{eq:hyperproofII} to conclude that for any such $t$
	\begin{equation}\nonumber
		\begin{gathered}
			\abs{\nabla g(t,x) + xg(t,x)} =\frac{1}{\pa{2\pi}^{\frac{d}{2}}\sqrt{\det \W(t)}}\\
		\times 	\Big |\int_{\R^d}e^{-\frac{1}{2}\pa{x-e^{-\C t}y}^T \W(t)^{-1}\pa{x-e^{-\C t}y}}
			\pa{\W(t)^{-1}\pa{x-e^{-\C t}y}-x} g_0(y)dy\Big | \\
			\leq \frac{1}{\pa{2\pi\pa{1-\epsilon_1}}^{\frac{d}{2}}}\Big |\int_{\R^d}e^{-\frac{1}{2}\pa{x-e^{-\C t}y}^T \W(t)^{-1}\pa{x-e^{-\C t}y}}
			\pa{\pa{\W(t)^{-1}-{\bf{I}}}\pa{x-e^{-\C t}y}-e^{-\C t}y} g_0(y)dy\Big |
		\end{gathered}
	\end{equation}
	\begin{equation}\nonumber
		\begin{gathered}
			\leq \frac{1}{\pa{2\pi\pa{1-\epsilon_1}}^{\frac{d}{2}}}\int_{\R^d}e^{-\frac{1}{2}\pa{1-\epsilon_1}\abs{x-e^{-\C t}y}^2}
			\pa{\epsilon_1\abs{x-e^{-\C t}y}+\abs{e^{-\C t}y}} \abs{g_0(y)}dy.
		\end{gathered}
	\end{equation}
	Using Minkowski's integral inequality 
	\begin{equation}\nonumber
		\begin{split}
			\left(\int_{\R^d}\abs{\int_{\R^d}F(y,x)d\mu_1( y)}^q  d\mu_2(x) \right)^{\frac{1}{q}}
			\leq \int_{\R^d} \pa{ \int_{\R^d}\abs{F(y,x)}^q d\mu_{2}( x) }^{\frac{1}{q}}d\mu_1(y),
			%\left(\int_{X_2}\abs{\int_{X_1}F(x_1,x_2)d\mu_1( x_1)}^q  d\mu_2(x_2) \right)^{\frac{1}{q}}& \\
			%\leq \int_{X_1} \pa{ \int_{X_2}\abs{F(x_1,x_2)}^q d\mu_{2}( x_2) }^{\frac{1}{q}}&d\mu_1(x_1),
		\end{split}
	\end{equation}
	we see that 
	\begin{equation}\nonumber
		\begin{gathered}
			I^{{\bf{I}}}_2\pa{g(t)|f_\infty} = \int_{\R^d}\abs{\nabla\pa{\frac{g(t,x)}{f_\infty(x)}}}^2 f_\infty(x)dx \\
			\leq  \frac{1}{\pa{2\pi\pa{1-\epsilon_1}}^d}\int_{\R^d}\pa{\int_{\R^d}\underbrace{e^{-\frac{1}{2}\pa{1-\epsilon_1}\abs{x-e^{-\C t}y}^2}
				\pa{\epsilon_1\abs{x-e^{-\C t}y}+\abs{e^{-\C t}y}}}_{:=F(y,x)^2} \underbrace{\abs{g_0(y)}dy}_{:=d\mu_2(y)}}^2 \underbrace{f_\infty^{-1}(x)dx}_{:=d\mu_1(x)} \\
			\leq \frac{1}{\pa{2\pi\pa{1-\epsilon_1}}^d}\pa{\int_{\R^d}\pa{\int_{\R^d}e^{-\pa{1-\epsilon_1}\abs{x-e^{-\C t}y}^2}
					\pa{\epsilon_1\abs{x-e^{-\C t}y}+\abs{e^{-\C t}y}}^2 f^{-1}_{\infty}(x)dx}^{\frac{1}{2}} \abs{g_0(y)}dy}^2
		\end{gathered}
	\end{equation}
	\begin{equation}\label{eq:hyper_to_referred_by_hypo}
		\begin{gathered}
			\leq \frac{2}{\pa{2\pi\pa{1-\epsilon_1}^2}^{\frac{d}{2}}}\pa{\int_{\R^d}\pa{\int_{\R^d}e^{-\pa{1-\epsilon_1}\abs{x-e^{-\C t}y}^2}e^{\frac{\abs{x}^2}{2}}
					\pa{\epsilon_1^2\abs{x-e^{-\C t}y}^2+\abs{e^{-\C t }y}^2}dx}^{\frac{1}{2}} \abs{g_0(y)}dy}^2\\
			\leq \frac{2}{\pa{2\pi\pa{1-\epsilon_1}^2}^{\frac{d}{2}}}\pa{\int_{\R^d}\sqrt{\epsilon_1+\abs{e^{-\C t }y}^2}\pa{\int_{\R^d}e^{-\pa{1-2\epsilon_1}\abs{x-e^{-\C t}y}^2}e^{\frac{\abs{x}^2}{2}}
					dx}^{\frac{1}{2}} \abs{g_0(y)}dy}^2
		\end{gathered}
	\end{equation}
	where we have used the fact that 
	\begin{equation}\label{eq:useful_exponential_inequality}
		\epsilon_1 \abs{x-e^{-\C t}y}^2 \leq \frac{1}{e}e^{\epsilon_1\abs{x-e^{-\C t}y}^2}\leq e^{\epsilon_1\abs{x-e^{-\C t}y}^2}.
	\end{equation}
	A straightforward calculation gives
	$$\int_{\R^d}e^{-\pa{1-2\epsilon_1}\abs{x-e^{-\C t}y}^2}e^{\frac{\abs{x}^2}{2}}
	dx=\pa{\frac{\pi}{1-2\epsilon_1 - \frac{1}{2}}}^{\frac{d}{2}}e^{\frac{\frac{1}{2}(1-2\epsilon_1)}{1-2\epsilon_1-\frac{1}{2}}\abs{e^{-\C t}y}^2}=\pa{\frac{2\pi}{1-4\epsilon_1 }}^{\frac{d}{2}}e^{\frac{1-2\epsilon_1}{1-4\epsilon_1}\abs{e^{-\C t}y}^2}$$
	and since $t\geq\widetilde{t}\pa{\epsilon_1}$ we can further estimate the above by using \eqref{eq:hyperproofIII} and find that
	$$\int_{\R^d}e^{-\pa{1-2\epsilon_1}\abs{x-e^{-\C t}y}^2}e^{\frac{\abs{x}^2}{2}}
	dx\leq \pa{\frac{2\pi}{1-4\epsilon_1 }}^{\frac{d}{2}}e^{\frac{1-2\epsilon_1}{1-4\epsilon_1}\epsilon_1^2\abs{y}^2}.$$
	Plugging this into our preceding estimate and using \eqref{eq:hyperproofIII} again we find that
	\begin{equation}\nonumber
		\begin{gathered}
			I^{{\bf{I}}}_2\pa{g(t)|f_\infty} \leq \frac{2\epsilon_1}{\pa{\pa{1-4\epsilon_1}\pa{1-\epsilon_1}^2}^{\frac{d}{2}}}\pa{\int_{\R^d}\sqrt{1+\epsilon_1\abs{y}^2}e^{\frac{1-2\epsilon_1}{1-4\epsilon_1}\frac{\epsilon_1^2}{2}\abs{y}^2}\abs{g_0(y)}dy}^2\\
			\leq \frac{2\epsilon_1}{\pa{\pa{1-4\epsilon_1}\pa{1-\epsilon_1}^2}^{\frac{d}{2}}}\pa{\int_{\R^d}\pa{1+\epsilon_1\abs{y}^2}e^{\frac{1-2\epsilon_1}{1-4\epsilon_1}\frac{\epsilon_1^2}{2}\abs{y}^2}\abs{g_0(y)}dy}^2\\
			\leq \frac{2\epsilon_1}{\pa{\pa{1-4\epsilon_1}\pa{1-\epsilon_1}^2}^{\frac{d}{2}}}\pa{\int_{\R^d}e^{\pa{\frac{1-2\epsilon_1}{1-4\epsilon_1}\frac{\epsilon_1^2}{2}+\epsilon_1}\abs{y}^2}\abs{g_0(y)}dy}^2.
		\end{gathered}
	\end{equation}
	Choosing $\epsilon_1\pa{\epsilon}$ such that $\epsilon_1 \leq \frac{1}{8}$ and
	$$\frac{1-2\epsilon_1}{1-4\epsilon_1}\frac{\epsilon_1^2}{2}+\epsilon_1 \leq \epsilon$$
	yields \eqref{eq:hypercontractivity} as 
	\begin{equation}\nonumber
		\frac{2\epsilon_1}{\pa{\pa{1-4\epsilon_1}\pa{1-\epsilon_1}^2}^{\frac{d}{2}}} \leq 2^{\frac{3d}{2}}.
	\end{equation}
	This concludes the proof.
%	Similar steps (as can be seen in \cite{AEW18}) yield the result for $e_2$. In particular, the condition we can choose on $\epsilon_1$ to attain \eqref{eq:hypercontractivity_entropy} will be
%	$$\frac{1-\epsilon_1}{2(1-2\epsilon_1)}\epsilon_1 \leq \epsilon,\quad \epsilon_1\leq \frac{1}{4}.$$
\end{proof}

%\begin{remark}\label{rem:growthofpsiprimeprime}
%	Condition \eqref{eq:growthofpsiprimeprime} is quite natural in the setting of admissible relative entropies. Indeed, as we already saw, $\br{\psi_p}_{1<p<2}$ satisfy it. Moreover, amongst the other generating functions mentioned in \S\ref{sec:generalfisher} the functions that satisfy
%	$$\psi^{\prime\prime}(y)=\frac{1}{\pa{1+\sqrt{y}}^2},\quad \psi^{\prime\prime}(y)=\frac{1}{1+y},$$
%	have uniformly bounded second derivative and hence satisfy the condition of the theorem, while the functions that satisfy
%	$$\psi^{\prime\prime}(y)=1+\frac{1}{y},\quad \psi^{\prime\prime}(y)=\frac{e^{\frac{1}{\sqrt{2}}\arctan\pa{\frac{\log y}{\sqrt{2}}}}}{y},$$
%	have the same behaviour as that of $\psi_1(y)=y\log y -y+1$ near zero, which violates our assumptions.
%\end{remark}

%%%%%%%%%%%%%%%%%%%%%%%%%%%%%%%%%%%%%%%%%%%%%%%%%%%%%%%%%%%%%%%%%%%%%%%%%%%%%%%%%%%%%%%%%
%%%%%%%%%%%%%%%%%%%%%%%%%%%%%%%%%%%%%%%%%%%%%%%%%%%%%%%%%%%%%%%%%%%%%%%%%%%%%%%%%%%%%%%%%
\bibliographystyle{plain}

\end{document}